\newcommand\MA{\operatorname{MA}}
\newcommand\Ric{\operatorname{Ric}}
\newtheorem{theorem}{Theorem}[section]
\newtheorem{lemma}[theorem]{Lemma}
\newtheorem{proposition}[theorem]{Proposition}
\newtheorem{corollary}[theorem]{Corollary}
\newtheorem{definition}[theorem]{Definition}
\newcommand{\C}{{\mathcal C}}
\newcommand{\Q}{{\mathcal Q}}
\newcommand{\LLL}{{\mathcal L}}
\newcommand{\maths}[1]{{\mathbb #1}}  
\newcommand{\RR}{\maths{R}}
\newcommand{\NN}{\maths{N}}
\newcommand{\CC}{\maths{C}}
\newcommand{\ZZ}{\maths{Z}}
\newcommand{\bbb}{{\mathfrak b}}
\newcommand{\aaa}{{\mathfrak a}}
\renewcommand{\ggg}{{\mathfrak g}}
\renewcommand{\lll}{{\mathfrak l}}
\newcommand{\ppp}{{\mathfrak p}}
\newcommand{\hhh}{{\mathfrak h}}
\newcommand{\zzz}{{\mathfrak z}}
\newcommand{\CCC}{{\mathfrak C}}
\newcommand{\ttt}{{\mathfrak t}}
\newcommand{\kkk}{{\mathfrak k}}
\newcommand{\NNN}{{\mathfrak N}}
\newcommand{\XXX}{{\mathfrak X}}
\newcommand{\qqq}{{\mathfrak q}}
\title{Kähler-Ricci flow on horospherical manifold}
\author{\sc{Delgove} François}
\date{\today}
\begin{document}

\maketitle

\begin{abstract}
    In this paper, we prove the existence of a Kähler–Ricci soliton on any smooth Fano horospherical manifold by a study of the Kähler-Ricci flow. Indeed, we prove that the renormalized Kähler-Ricci flow converges in the sense of Cheeger-Gromov and that this limit is a Kähler-Ricci soliton.\\
    \textbf{Keywords:} Kähler–Ricci soliton, horospherical manifold, Monge–Ampère equation, Kahler-Ricci flow. \\
    \textbf{AMS codes:} 53C55, 58E11, 53C44, 14M27, 14J45  
\end{abstract}

\section{Introduction}

The founding paper on the Kähler-Ricci solitons is Hamilton's article \cite {Hamilton}. They are natural generalizations of Kähler-Einstein metrics and appear as fixed points of the Kähler-Ricci flow. On a Fano compact Kähler manifold $M$, a Kähler metric $g$ is a \textit{Kähler-Ricci soliton} if its Kähler form $\omega_g$ satisfies :
$$
\operatorname{Ric}(\omega_g) - \omega_g = \LLL_X \omega_g,
$$
where $Ric(\omega_g)$ is the Ricci form of $g$ and $\LLL_X \omega_g$ is the Lie derivative of $\omega_g$ along a holomorphic vector field $X$ on $M$. As usual, we denote the Kähler-Ricci soliton by the pair $(g,X)$ and $X$ is called the \textit{solitonic vector field}. Note that if $X=0$ then $g$ is a Kähler-Einstein metric. When $X \neq 0$, we say that the Kähler-Ricci soliton is \textit{non-trivial}. In order ti recover the original definition, we denote by $(\sigma_t)_{t \in [0,+\infty[}$ the family of diffeomorphisms generated by $\frac{1}{2(1 + t)} X$ and the Kähler form $ \tilde{\omega}= (1+t) \, \sigma_t^* \omega_g$ satisfies the equation of the Ricci flow $\frac{\partial}{\partial t} \, \tilde{\omega_t}= -\Ric(\tilde{\omega}_t)$.

The first study of the solitonic vector field $X$ was done in the paper \cite{TZ1,TZ2}. Thanks to the Futaki function, the authors discovered an obstruction to the existence of Kähler-Ricci soliton and proved that $X$ is in the center of a reductive Lie subalgebra $\eta_r(M)$ of Lie algebra $\eta(M)$ of all holomorphic vector fields. This study also gives us a uniqueness result about Kähler-Ricci soliton (Theorem 0.1 in \cite{TZ1}). Subsequently, the study was developped by Wang and Zhu in \cite{WZ} where they show the existence of Kähler-Ricci solitons on toric manifolds using the continuity method. 
Another wau to prove the existence of a Kähler-Ricci soliton is by a study of the Kähler-Ricci flow. Indeed, in \cite{TZx}, it was proved that if a Fano compact Kähler manifold admits a Kähler-Ricci soliton then the renormalized Kähler-Ricci flow converges in the sense of Cheeger-Gromov to a Kähler-Ricci soliton.  Zhu showed in \cite{ZZZ} that the Kähler-Ricci flow converges to a Kähler-Ricci soliton on a toric manifold without the a priori assumption of existence of Kähler-Ricci soliton. Recently, this technique has been also extended in \cite{2017arXiv170507735H} to toric fibration. In this paper, we extend this result to horospherical manifolds.
\begin{theorem}
Let $M$ be a Fano horosphercal manifold with associated horospherical homegeneous space $G/H$ where $G$ is the complexification of a maximal compact subgroup $K$. The solution $\omega_{t}$ defined for $t \in [0, + \infty[$ of the Kähler-Ricci flow with inital value a $K$-invariant metric $\omega_0$ converges in the Cheeger-Gromov sense to a Kähler form $\omega_{\infty}$ when $t$ tends to $+ \infty$ where $\omega_{\infty}$ is a Kähler-Ricci soliton.
\end{theorem}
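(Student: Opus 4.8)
The plan is to exploit the $K$-invariance together with the horospherical structure to reduce the K\"ahler--Ricci flow to a scalar parabolic real Monge--Amp\`ere equation on a vector space, to establish uniform a priori estimates modulo the action of the connected automorphism group, and then to pass to a Cheeger--Gromov limit whose stationarity forces the soliton equation. No a priori existence of a soliton is assumed; as in the toric case of \cite{ZZZ}, the decisive input is combinatorial. Writing the normalized flow at the level of potentials, $\omega_t=\omega_0+i\partial\bar\partial\varphi_t$ with $\partial_t\varphi_t=\log(\omega_t^n/\omega_0^n)+\varphi_t-h_{\omega_0}$ (which preserves $K$-invariance), one uses the torus $T=N(H)/H$ of dimension $n_0=\operatorname{rank}(G/H)$ and the flag fibration $G/N(H)=G/P$ to encode a $K$-invariant potential by a smooth convex function $u_t$ on $\aaa\cong\RR^{n_0}$; the complex Monge--Amp\`ere operator then becomes a real Monge--Amp\`ere operator weighted by the polynomial $\pi(p)=\prod_\alpha\langle\alpha,p\rangle$, the product running over the positive roots of $G$ not in the Levi of $P$. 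The flow becomes $\partial_t u_t=\log\det(D^2u_t)+\log\pi(\nabla u_t)+u_t-\ell$ for a fixed affine function $\ell$, with the gradient image $\nabla u_t(\aaa)=\operatorname{int}\Delta$ (the interior of the moment polytope of $(M,-K_M)$) preserved along the flow, and stationary solutions of the $X$-modified version of this equation are exactly the K\"ahler--Ricci solitons whose solitonic vector field is the element $X$ of $\aaa$.

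\emph{The solitonic vector field and a monotone functional.} After centering $\Delta$ so that $0\in\operatorname{int}\Delta$, the function $X\mapsto\log\int_\Delta e^{\langle X,p\rangle}\pi(p)\,dp$ on $\aaa$ is smooth, strictly convex and proper, so it has a unique minimizer $X_0$; at $X_0$ its gradient vanishes, that is, $\int_\Delta p\,e^{\langle X_0,p\rangle}\pi(p)\,dp=0$, which is exactly the vanishing of the modified Futaki invariant of Tian--Zhu for $X_0$. One then runs the $X_0$-modified normalized flow, along which the modified Ding functional $\mathcal D_{X_0}$ is monotone non-increasing and whose derivative controls a weighted $L^2$-norm of $\partial_t\varphi_t$ measuring the failure of $\omega_t$ to be a soliton with vector field $X_0$.

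\emph{The $C^0$ estimate (the main obstacle).} I would prove that, modulo the action of $\operatorname{Aut}^0(M)$ --- equivalently, after composing $u_t$ with a $t$-dependent translation of $\aaa$ and subtracting a constant --- the normalized convex functions $\tilde u_t$ satisfy $\|\tilde u_t\|_{C^0(B_R)}\le C(R)$ for each $R>0$. Following \cite{WZ} and \cite{ZZZ}, the crucial input is the combinatorial choice of $X_0$: the vanishing of the modified Futaki invariant prevents the normalized potentials from sliding off towards a proper face of $\Delta$, whence $\mathcal D_{X_0}$ is bounded below along the flow and the $\tilde u_t$ cannot degenerate. The genuinely new feature compared with the toric setting is the weight $\pi$, which vanishes on the walls of the Weyl chamber, together with the flag-variety contribution to the volume form; the estimates must be redone for this weighted parabolic Monge--Amp\`ere equation, with care near the boundary of $\Delta$ --- that is, near the non-open $G$-orbits of $M$ --- handled as one handles toric boundary divisors. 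I expect this to be the hardest part of the argument.

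\emph{Higher-order estimates and convergence.} Granted the $C^0$ bound, uniform second-order estimates follow from the parabolic maximum principle (in the manner of Aubin--Yau), then $C^{2,\alpha}$ estimates from parabolic Evans--Krylov and $C^\infty$ estimates from parabolic Schauder bootstrapping, all uniform in $t$ modulo the normalization above, and the potentials are seen to be uniformly smooth up to the boundary divisors. Extracting a sequence $t_i\to+\infty$ and automorphisms $\sigma_i\in\operatorname{Aut}^0(M)$ realizing the normalizations, one obtains $\sigma_i^*\omega_{t_i}\to\omega_\infty$ in $C^\infty$ on $M$. Since $\mathcal D_{X_0}$ is monotone and bounded below, $\tfrac{d}{dt}\mathcal D_{X_0}\to0$ along a subsequence, which forces the soliton residual to vanish in the limit, so $\Ric(\omega_\infty)-\omega_\infty=\LLL_{X_0}\omega_\infty$ and $\omega_\infty$ is a K\"ahler--Ricci soliton. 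Finally, the uniqueness theorem of Tian--Zhu \cite{TZ1} shows that every Cheeger--Gromov subsequential limit of the flow is this same soliton, which upgrades subsequential convergence to convergence of $\omega_t$ as $t\to+\infty$.
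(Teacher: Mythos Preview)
Your overall strategy matches the paper's: reduce the $K$-invariant flow to a real Monge--Amp\`ere equation on $\aaa_1$ with the Duistermaat--Heckman weight $\prod_{\alpha\in\Phi_P^+}\langle\alpha,\cdot\rangle$, normalize the convex potentials by translations of $\aaa_1$ (i.e.\ by elements of $\operatorname{Aut}^0(M)$), use a monotone functional whose monotonicity is available precisely because the modified Futaki invariant of the chosen $X$ vanishes, obtain the $C^0$ estimate, bootstrap to $C^\infty$, extract a soliton limit along a subsequence, and upgrade to full convergence via Tian--Zhu uniqueness.

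Two points of comparison are worth recording. First, the paper works with the generalized $K$-energy $\tilde\mu$ (and the auxiliary functionals $\tilde F$, $\tilde J$, $I$) rather than the modified Ding functional $\mathcal D_{X_0}$ you propose; these are interchangeable for the purpose at hand, since $\tilde\mu\ge\tilde F-C$ and both are monotone along the (pulled-back) flow once $F_X\equiv0$ on $\eta_r(M)$. Second --- and this is the more substantive omission in your sketch --- the paper's $C^0$ argument does not rest on the functional alone: Perelman's uniform bound on the Ricci potential $h_t$ along the flow is used at the outset to control $\partial_t u_t - c_t$, which in turn yields the two-sided bound on the minimum $m_t$ of the normalized potential via John's ellipsoid lemma and a comparison argument for the real Monge--Amp\`ere operator. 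Only after these preliminary bounds does the functional machinery enter to control $\operatorname{osc}\overline\varphi_t$. Your phrase ``the vanishing of the modified Futaki invariant prevents the normalized potentials from sliding off'' is correct in spirit but hides the fact that Perelman's estimate is the actual starting point; without it, you would not get the lower bound on $\mathcal D_{X_0}$ (equivalently, on $\tilde\mu$) that makes the monotonicity useful. You should expect to need it.
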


Our paper is divided into three sections. The first one makes horospheric geometry reminders, it is inspired \cite{Delcroix2}. The second one gives reminders about the Kähler-Ricci solitons and proves the existence of a solitonic vector field whose Futaki invariant vanishes on every horospherical manifold. Finally, in a last part, the main result of the article is proved by studying the Kähler-Ricci flow.

 The author would like to thank F. Paulin for his help for the redaction of this paper.
 
\section{Horospherical Varieties}\label{blablabla}

In this section, we give some reminders on the theory of algebraic groups and on horospherical varieties. A good reference for the group theory part is \cite{Spr98}. For the notion of horospherical variety, we refer to \cite{Pasquier2009,Tim11}.

\subsection{Reductive group}

Let $G$ be a  reductive connected linear complex algebraic group. We denote by $\ggg$ its Lie algebra. If $K$ is a maximal compact subgroup of $G$ with Lie algebra $\kkk$ then 
$$ \ggg = \kkk \oplus J \kkk, $$
where $ J $ is the complex structure of $ \ggg $. Fix a maximal torus $ T $ of a Borel subgroup $ B $ of $G$. Denote by $ \Phi \subset \XXX (T) $ the root system of $ (G, T) $ where $ \XXX (T) $ is the group of algebraic characters of $ T $. We have the root space decomposition:
$$
\ggg = \ttt \oplus \bigoplus_{\alpha \in \Phi} \ggg_\alpha$$
where for any $\alpha \in \XXX(T)$, $ \ggg_\alpha: = \lbrace x \in \ggg ~:~ \forall h \in \ttt ~ ~ \operatorname{ad}(h) (x) = \alpha (h) x\rbrace$, so that $\ggg_\alpha$ is a complex line if and only if $\alpha \in \Phi$.

Let $ \Phi^+ :=\Phi^+(B)$ be the set of \textit{positive roots} (associated with $ B $) so that the Lie algebra $\bbb$ of $B$ satisfies
$$
\bbb = \ttt \oplus \bigoplus_{\alpha \in \Phi^+} \ggg_\alpha.
$$
We then define the negative roots $\Phi^-:=\Phi^-(B)=-\Phi^+$ of $\Phi$ associated with $B$ so that
$
\Phi = \Phi^+ \sqcup \Phi^-.
$
Let  $B^-$ be the unique Borel subgroup of $G$ called the \textit{opposite Borel subgroup} of $B$ with respect to $ T $ verifying $B \cap B^- =T$. Note that $\Phi^+(B^-)=\Phi^-(B)$.

Denote by $\Sigma$ the set of \textit{simple roots} as the set of roots in $\Phi^+$ that cannot be written as the sum of two elements of $\Phi^+$. For any subset $I$ of $\Sigma$, if we give a subset $I$, let $\Phi_I$ be the subset of $\Phi$ generated by the roots contained in $I$. \textit{The parabolic group containing $B$} with respect to $ I$, denoted by $P:=P_I$, is the connected closed subgroup of $G$ whose Lie algebra is
$$
\ppp = \ttt \oplus \bigoplus_{\alpha \in \Phi^+ \cup \Phi_I} \ggg_\alpha.
$$
Let
$
\Phi_P := \Phi^+ \cup \Phi_I.
$
The parabolic subgroup opposed to $P$, denoted by $Q:=Q_I$, is the parabolic subgroup associated with $I$ for the Borel subgroup $B^-$ i.e. the connected closed Lie subgroup with Lie algebra
$$
\qqq = \ttt \oplus \bigoplus_{\alpha \in \Phi^- \cup \Phi_I} \ggg_\alpha.
$$
Moreover $L = P \cap Q$ is a Levi subgroup of $P$ and we define $\Phi_L$ by
$$
\lll = \ttt \oplus \bigoplus_{\alpha \in \Phi_L} \ggg_\alpha.
$$
. Let $\Phi^+_P$ be the set of roots that are not in $\Phi_Q = \Phi^- \cup \Phi_I$ so that 
$
\Phi^+_P \sqcup \Phi_Q = \Phi.
$
Note that $\Phi^+_P$ is the set of roots of the unipotent radical $U$ of $P$, that
$
\Phi = \Phi^+_P \sqcup \Phi_I \sqcup \Phi_Q^+,
$
and that
$
\Phi^+_Q=-\Phi^+_P.
$
Finally, we define
$$
2\rho_P := \sum_{\alpha \in \Phi^+_P} \alpha.
$$

\subsection{Horospherical subgroups and homogeneous horospherical spaces}

 If $H$ a closed connected algebraic subgroup of $G$, then $H$ is said to be a \textit{horospherical subgroup} of $G$ if $H$ contains the unipotent radical $U$ of a Borel subgroup  $B$. We can build horospherical subgroups using parabolic subgroups. Let $N_G(H):=\lbrace g \in G ~:~ gHg^{-1}=H \rbrace$ be the normalizer of $H$ in $G$.

\begin{proposition}[\cite{pasquierthese}]\label{PHt}
Let $H$ be a horospherical subgroup of $G$. Then $P:=N_G(H)$ is a parabolic subgroup of $G$ containing the Borel subgroup $B$, and the quotient $P/H=T/(T\cap H)$ is a torus.
Conversely, if $H$ is a connected closed algebraic subgroup of $G$ such that $N_G(H)$ is a parabolic subgroup $P$ of $G$ and $P/H$ is a torus, then $H$ is horospherical. The fibration
$$
G/H \longrightarrow G/P.
$$
is a torus fibration over a generalized flag manifold with fiber $P/H$.
\end{proposition}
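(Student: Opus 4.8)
The plan is to reduce the statement to the level of the Lie algebra $\hhh$ of $H$ (legitimate because $H$ is connected) and to extract both implications from a root-space decomposition of $\hhh$. The genuinely non-formal point, which I expect to be the main obstacle, is the purely Lie-theoretic fact that a subalgebra of $\ggg$ containing $\uuu:=\bigoplus_{\alpha\in\Phi^+}\ggg_\alpha$ (the Lie algebra of the unipotent radical $U$ of $B$) is automatically a direct sum of $\ttt$-weight spaces together with a subspace of $\ttt$; once this is in hand the rest is bookkeeping with the combinatorics of $\Phi$ and the standard structure of parabolic subgroups.

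\emph{Forward direction.} Assume $\uuu\subseteq\hhh$. The structural step just mentioned gives
$$\hhh=(\hhh\cap\ttt)\oplus\bigoplus_{\alpha\in\Psi}\ggg_\alpha,\qquad \Phi^+\subseteq\Psi\subseteq\Phi;$$
I would prove it by hand — given $x\in\hhh$, subtract its $\uuu$-part; bracketing the remainder with the positive root vector $e_\beta$ produces the coroot direction $[e_\beta,x_{-\beta}]\in\hhh\cap\ttt$ whenever the $\ggg_{-\beta}$-component of $x$ is non-zero, and then bracketing $x$ against the coroots thus produced peels off its individual root components, an induction on the number of negative root components finishing the job (this is also the classical structure theory of subgroups containing a maximal unipotent subgroup). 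Set $I:=\{\alpha\in\Sigma:-\alpha\in\Psi\}$. Using that $\hhh$ is a subalgebra and the relations $[\ggg_\beta,\ggg_\gamma]=\ggg_{\beta+\gamma}$ for $\beta,\gamma,\beta+\gamma\in\Phi$, the set $\Psi\cap\Phi^-$ is stable under passing to summands and is generated by $-I$, so $\Psi\cap\Phi^-=-\Phi_I^+$ and $\hhh+\ttt=\ttt\oplus\bigoplus_{\gamma\in\Phi^+\cup\Phi_I}\ggg_\gamma=\ppp$, the Lie algebra of the standard parabolic $P_I\supseteq B$. Since $\hhh$ is $\ttt$-stable, $[\ppp,\hhh]=[\hhh+\ttt,\hhh]\subseteq\hhh$, whence $\ppp\subseteq N_{\ggg}(\hhh)$ and $P_I\subseteq N_G(H)^{\circ}$; conversely $N_{\ggg}(\hhh)$ is a subalgebra containing $\bbb$, hence a standard parabolic $\ppp_J$ with $J\supseteq I$, and if $\alpha\in J\setminus I$ then its negative root vector $f_\alpha$ normalizes $\hhh$, so bracketing $f_\alpha$ first with $e_\alpha\in\uuu\subseteq\hhh$ and then with $h_\alpha=[f_\alpha,e_\alpha]\in\hhh\cap\ttt$ forces $2f_\alpha\in\hhh$, i.e. $\alpha\in I$, a contradiction. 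Thus $N_{\ggg}(\hhh)=\ppp$, $N_G(H)^{\circ}=P_I$, and since a group normalizes its own identity component while parabolic subgroups are self-normalizing, $N_G(H)=P_I=:P$. Finally $\uuu\subseteq\hhh$ and $\Psi\cap\Phi^-=-\Phi_I^+$ give $R_u(P)\subseteq H$ and $[L,L]\subseteq H$, so $T\hookrightarrow P\twoheadrightarrow P/H$ is surjective with kernel $T\cap H$, whence $P/H=T/(T\cap H)$ is a torus.

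\emph{Converse and fibration.} Given $H$ connected closed with $N_G(H)=P$ parabolic and $P/H$ a torus, first note $H\subseteq N_G(H)=P$. Choose a Borel $B'\subseteq P$, a maximal torus $T'\subseteq B'$, the Levi $L'\supseteq T'$ of $P$, and put $U':=R_u(B')$. Abelianity of $P/H$ forces $[P,P]\subseteq H$, and $[P,P]=[L',L']\cdot R_u(P)\supseteq (U'\cap L')\cdot R_u(P)=U'$ because $U'\cap L'$ is a maximal unipotent subgroup of the semisimple group $[L',L']$; hence $U'\subseteq H$ and $H$ is horospherical. For the last assertion, $G/P$ is a generalized flag manifold (a projective rational homogeneous space) because $P$ is parabolic, and $G\to G/P$ is a locally trivial $P$-bundle (trivial over the big Bruhat cell and its $G$-translates), so $G/H=G\times^{P}(P/H)$ is a fibre bundle over $G/P$ with fibre the torus $P/H$, which is what was claimed.
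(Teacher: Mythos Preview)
The paper does not give its own proof of this proposition; it is stated with a citation to Pasquier's thesis and then used as structural background for the rest of Section~\ref{blablabla}. So there is no in-paper argument to compare yours against.

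Your proof is correct in outline and in almost every detail. The only step that is genuinely sketchy is the root-space decomposition $\hhh=(\hhh\cap\ttt)\oplus\bigoplus_{\alpha\in\Psi}\ggg_\alpha$, i.e.\ the $\operatorname{ad}(\ttt)$-stability of $\hhh$. As written, your sentence ``bracketing with $e_\beta$ produces $[e_\beta,x_{-\beta}]\in\hhh\cap\ttt$'' ignores the cross-terms $[e_\beta,x_{-\gamma}]$ for $\gamma\neq\beta$, which need not vanish and need not lie in $\uuu$. The clean way to run your induction is to pick $\beta$ of \emph{maximal height} among the negative roots actually occurring in $x$: then $\beta-\gamma\in\Phi^-$ would force $\operatorname{ht}(\gamma)>\operatorname{ht}(\beta)$, so every surviving cross-term lies in $\uuu$ and one really obtains $c_\beta h_\beta\in\hhh$ modulo $\uuu$; iterating and using the $h_\beta$'s thus produced (together with a Vandermonde/eigenspace separation) then peels off the components. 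Alternatively, as you yourself remark, one may simply invoke the classical structure theorem for closed subgroups containing a maximal unipotent subgroup, which gives the decomposition at once. Everything downstream --- the identification $N_\ggg(\hhh)=\ppp_I$ via the $e_\alpha$--$h_\alpha$--$f_\alpha$ trick, the equality $P/H=T/(T\cap H)$ from $[L,L]\cdot R_u(P)\subseteq H$, the converse via $[P,P]=[L',L']\cdot R_u(P)\supseteq U'$, and the associated-bundle description of $G/H\to G/P$ --- is correct as you have it.
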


We obtain the following decomposition of the Lie algebra $\hhh$ of $H$:
$$
\hhh = \ttt_0 \oplus \bigoplus_{\alpha \in \Phi_P} \ggg_\alpha,\text{ where } \ttt_0 = \ttt \cap \hhh.
$$

If $H$ is a horospherical subgroup of $G$, then $G/H$ is called a \textit{homogeneous horospherical space}. On $G/H$, the normalizer $P:=N_G(H)$ acts by right multiplication by the inverse and $H$ acts trivially. We define an action of $G \times P/H$ on $G/H$ by 
$$
(g,pH) \cdot xH = gxp^{-1}H.
$$
Note that the isotropy group of $eH$ is $\lbrace (p,pH) \in G \times P/H ~:~ p \in P \rbrace$. This group is called $diag(P)$ and is isomorphic to $P$ by the first projection.

\subsection{Character groups and one parameter subgroup}
Let 
$ 
\aaa = \ttt \cap J \kkk. 
$
We have an identification between $ \NNN (T) \otimes_\ZZ \RR $ and $\aaa$, where $ \NNN (T) $ is the group of algebraic one-parameter subgroups $\lambda : \CC^\times \rightarrow T$ of $T$, given by the derivative at point $1$ of the restriction of $\lambda$ to $\RR^+_*$. Since, $T \cap H$ is a subtorus of $T$, the image $\NNN(T \cap H)$ defines a sublattice of $\NNN(T)$ corresponding to the one-parameter subgroups having values in $T \cap H$. With $\aaa_0 = \NNN(T \cap H) \otimes_\ZZ \RR$, we have
$
\ttt_0 = \aaa_0 \oplus J \aaa_0.
$

Recall that the Killing form $\kappa$ of $\ggg$ defines a scalar product $ ( \cdot, \cdot) $ on $\aaa \cap[\ggg, \ggg]$. In addition, $\kappa$ is egal to zero on $\zzz(\ggg)$. Thus we can define a global scalar product on $\aaa$ by taking a scalar product invariant by the Weyl group $W$ on $ \aaa \cap \zzz(\ggg) $ and assuming that $ \aaa \cap \zzz(\ggg) $ and $ \aaa \cap [\ggg, \ggg] $ are orthogonal. Let $\aaa_1$ be the orthogonal of $\aaa_0$ for the scalar product $ (\cdot, \cdot) $  so that
$
\ttt = \aaa_0 \oplus \aaa_1 \oplus J \aaa_0 \oplus J \aaa_1
$
and 
$
\ppp/\hhh \simeq \aaa_1 \oplus J \aaa_1
$.

Finally, we recall that there is a natural pairing $ \langle \cdot, \cdot
\rangle $ between $ \NNN (T) $ and $ \XXX (T) $ defined by $ \chi \circ \lambda (z) = z^{\langle \lambda, \chi \rangle} $ for all $\lambda \in \NNN(T), \chi \in \XXX(T)$ and $z \in \CC^\times$. In addition, the natural pairing between $ \Lambda= \XXX (T) \otimes_\ZZ \RR $ and $ \NNN (T) \otimes_\ZZ \RR $ obtained by  $\RR$-linearity can be seen as $ \langle \chi, a \rangle = \ln \chi (\exp a) $ for all $ \chi \in \XXX(T) $ and $ a \in \aaa \simeq \NNN (T) \otimes_\ZZ \RR $. We can also identify $\Lambda$ with $\aaa^*$. Since $(\cdot,\cdot)$ is a scalar product on $\aaa$, for $ \chi \in \XXX (T) $, we denote by $ t_\chi $ the unique element of $ \aaa $ such that 
\begin{equation}{\label{Dada2}}
\forall a \in \aaa, ~~ (t_\chi, a) = \langle \chi, a \rangle.
\end{equation}
For every $\alpha \in \Phi$, let $e_\alpha$ be a generator of the complex line $\ggg_\alpha$ such that $[e_\alpha, e_{-\alpha}] = t_\alpha$. Let us end this section by recalling the polar decomposition.
\begin{proposition}[\cite{Delcroix2}]\label{polar}
The image of $\aaa_1$ in $G$ under the exponential is a fundemental domain for the action of $K \times H$ on $G$, where $K$ acts by multiplication on the left and $H$ by multiplication on the right by the inverse. As a consequence, the set $\lbrace \exp(a)H ~:~ a \in \aaa_1 \rbrace$ is a fundamental domain for the action of $K$ on $G/H$.
\end{proposition}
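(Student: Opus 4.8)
The plan is to prove the second (quotient) assertion — that $\{\exp(a)H \;:\; a\in\aaa_1\}$ meets every $K$-orbit in $G/H$ in exactly one point — and to note that it is equivalent to the first. Indeed, by the Cartan decomposition $\exp$ is a diffeomorphism from $J\kkk$ onto its image, so $a\mapsto\exp(a)H$ is injective on $\aaa_1\subseteq\aaa\subseteq J\kkk$; hence counting the points of a $(K\times H)$-orbit $KgH$ lying in $\exp(\aaa_1)\subseteq G$ is the same as counting the points of the $K$-orbit of $gH$ lying in $\{\exp(a)H\}\subseteq G/H$, and ``fundamental domain for $K\times H$ on $G$'' and ``fundamental domain for $K$ on $G/H$'' say exactly the same thing.

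The main tool is the torus fibration $G/H\to G/P$ of Proposition \ref{PHt}, with fiber $P/H=T/(T\cap H)$. First I would use the Iwasawa decomposition $G=KP$ (standard structure theory, see \cite{Spr98}), so that $K$ acts transitively on the generalized flag manifold $G/P$ with stabilizer of the base point $eP$ equal to $K\cap P$. Projecting a relation $g'H=kgH$ with $g,g'\in P$ and $k\in K$ down to $G/P$ forces $k\in K\cap P$; therefore the intersection of any $K$-orbit with the fiber $P/H$ over $eP$ is a single $(K\cap P)$-orbit. Since $\aaa_1\subseteq\aaa\subseteq\ttt$ we have $\exp(\aaa_1)\subseteq T\subseteq P$, so all my candidate representatives lie in this fiber, and it suffices to prove that $\{\exp(a)H \;:\; a\in\aaa_1\}$ is a fundamental domain for the $(K\cap P)$-action on the torus $P/H$. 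Because $H$ is normal in $P=N_G(H)$ and $P/H$ is abelian, the action of $k\in K\cap P$ on the fiber is translation by the class $kH$, so the orbits are precisely the cosets of the image $S$ of $K\cap P$ in $P/H$.

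The heart of the argument is the identification of $S$ with the maximal compact subgroup of the complex torus $P/H$. It is contained in it since $K\cap P$ is compact; conversely $T\cap K\subseteq K\cap P$, and using the real polar decomposition $T=(T\cap K)\cdot\exp(\aaa)$ together with $\ttt_0=\aaa_0\oplus J\aaa_0$, the image of $T\cap K$ in $T/(T\cap H)$ is a subtorus whose Lie algebra is $J\aaa/J\aaa_0\cong J\aaa_1$, that is, a connected compact subgroup of full dimension $\dim\aaa_1=\dim_{\CC}(P/H)$, hence equal to the entire maximal compact subgroup. Finally, the orthogonal splitting $\aaa=\aaa_0\oplus\aaa_1$ (with $\exp(\aaa_0)\subseteq T\cap H$) gives a product decomposition
\[
P/H \;=\; T/(T\cap H)\;\cong\; S\times\exp(\aaa_1),
\]
under which $\exp(a)H\mapsto(e,\exp(a))$; thus $\exp(\aaa_1)$ is a global transversal meeting each coset of $S$ exactly once. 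Combined with the previous reductions, this shows every $K$-orbit in $G/H$ meets $\{\exp(a)H \;:\; a\in\aaa_1\}$ in exactly one point, as desired. The hard part will be exactly this identification of $S$: ruling out that the image of the stabilizer is too large (immediate from compactness) or too small (which rests on the precise identities $\ttt_0=\aaa_0\oplus J\aaa_0$ and $\aaa_1=\aaa_0^{\perp}$). Everything else — transitivity of $K$ on $G/P$, injectivity of $\exp$ on $J\kkk$, and the polar decomposition of $T$ — is standard and would simply be cited.
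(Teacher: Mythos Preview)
The paper does not prove Proposition~\ref{polar}; it is quoted from \cite{Delcroix2} without argument. There is therefore no in-paper proof to compare your proposal against.

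That said, your outline is sound. The reduction via the fibration $G/H\to G/P$ and the Iwasawa-type fact $G=KP$ correctly localizes the problem to the fiber $P/H$, and your identification of the image of $K\cap P$ in $P/H$ with the maximal compact subtorus is the right mechanism: the inclusion $T\cap K\subseteq K\cap P$ already surjects onto the compact part (since $\operatorname{Lie}(T\cap K)=J\aaa$ and $J\aaa_0\subseteq\ttt_0$), while compactness of $K\cap P$ gives the reverse inclusion. The final step --- that $\aaa_1\hookrightarrow P/H$ is a global slice for this compact torus --- is just the polar decomposition of the complex torus $P/H\simeq\aaa_1\oplus J\aaa_1$ modulo its cocharacter lattice. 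One small point worth making explicit in a write-up: the injectivity of $a\mapsto\exp(a)H$ on $\aaa_1$ uses both that $\exp$ is injective on $J\kkk$ \emph{and} that $\exp(\aaa_1)\cap H=\{e\}$; the latter follows because $\exp(\aaa_1)\cap H\subseteq\exp(\aaa_1)\cap T\cap H$, and $\aaa_1\cap\ttt_0=0$ by construction.
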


\subsection{Horospherical variety}

Recall that a complex algebraic  $G$-variety is a reduced finite type scheme over $\CC$ with an algebraic action of $G$. A normal complex algebraic $G$-variety $X$ will be said to be \textit{$G$-spherical} if it admits an open and dense $B$-orbit. Note that $X$ is then connected. A $G$-spherical variety $X$ will be called \textit{horospherical} if the stabilizer $H$ in $G$ of a point $x$ in the open and dense $B$-orbit is horospherical. We then will say that $(X,x)$ is \textit{a horospherical embedding}. In particular, $X$ has $G/H$ as a dense open subset. Two horospherical embeddings $(X,x)$ and $(X',x')$ are \textit{isomorphic } if there is an $G$-equivariant isomorphism from $X$ to $X'$ sending $x$ to $x'$. In this paper, we will assume that $X$ is smooth. Thanks to the GAGA theorems (see \cite{AIF_1956__6__1_0}), the variety $X$ is therefore a Fano projective complex manifold and in particular $X$ is a connected compact Fano Kähler manifold for the metric induced by the Fubini-Study metric of the projective space.

We fix a homogeneous horospherical space $G/H$. Recall that there is a unique element $\alpha^\vee \in \XXX(T)^* \simeq \NNN(T)$ such that $\langle \alpha, \alpha^\vee \rangle=2$. With our previous notations, we have 
$
\alpha^{\vee}= \frac{2}{\Vert t_\alpha \Vert^2} \, t_\alpha.
$
Define $a_\alpha \in \ZZ$ by $ a_\alpha=\langle 2\rho_P, \alpha^\vee \rangle$ and \textit{ Weyl's dominant closed chamber} $\CCC$ by
$$
\CCC = \lbrace \chi \in \Lambda ~:~ \forall \alpha \in \Phi^+ ,~~ \langle \chi, \alpha^\vee \rangle \geq \rbrace.
$$
and \textit{the open dominant Weyl chamber} as the interior of the closed dominant Weyl chamber. The semigroup of  \textit{dominant weights} is defined by $\Lambda^+:= \XXX(T) \cap \CCC$.

We have the following definition introduced in \cite{pasquierthese}.
\begin{definition}\label{GHref}
Let $G/H$ be a homogeneous horospherical space. A convex polytope $\Q$ of $\aaa \simeq \NNN(T) \otimes_\ZZ \RR$ is said to be \rm{$G/H$-reflective} if we have the following three conditions:
\begin{enumerate}
\item[(1)] $\Q$ has its vertices in $\NNN(T) \cup \left\lbrace \frac{\alpha^\vee}{a_\alpha} ~:~ \alpha \in \Phi^+_P \right\rbrace$ and contains $0$ in its interior,
 
\item[(2)] the dual polytope $\Q^*$ has its vertices in $\XXX(T)$,
\item[(3)] for all $ \alpha \in \Phi^+_P$, we have $\cfrac{\alpha^\vee}{a_\alpha} \in \Q$.
\end{enumerate}
\end{definition}

The \textit{moment polytope} $\Delta^+ \subset \aaa^*$ with respect to the Borel subgroup $B$ of the horospherical manifold $X$ is the \textit{Kirwan's moment polytope} of the Kähler manifold $(X,\omega)$ for the action of a maximum compact subgroup $K$ of $G$, where $\omega$ is a $K$-invariant Kähler form in $ 2 \pi \, c_1(X)$ (see \cite{Bri87,K1} for more details). We then have the following result.

\begin{proposition}[\cite{pasquierthese}]\label{roro1}
Let $G/H$ be a homogeneous horospherical space. There exists a bijection between theset of Fano horospherical  embedding of $G/H$ and the set of $G/H$-reflective polytopes $\Q$ in $\aaa$. 
In addition, the polytope $ 2 \rho_P + \Q^*$ is the moment polytope with respect the Borel subgroup $B$ of the horospherical embedding. The assumption $(3)$ of Definition \ref{GHref} is then equivalent to the fact that $ \Delta^+=2 \rho_P + \Q^*$ is included in $\CCC$. In particular, we see that $0 \in \operatorname{Int}(\Q^*)$ and that therefore $2 \rho_P \in \operatorname{Int}(\Delta^+)$.
\end{proposition}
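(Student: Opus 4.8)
The plan is to run the Luna--Vust classification of spherical embeddings in the horospherical case \cite{Tim11, pasquierthese}, where the combinatorial data simplifies drastically. Recall that on $G/H$ the $B$-stable prime divisors split into the colors $D_\alpha$, indexed by the simple roots $\alpha \in \Sigma \cap \Phi^+_P$ and mapping in $\aaa$ to the coroots $\alpha^\vee$, and the $G$-stable prime divisors of an embedding, which correspond to rays with primitive generators in $\NNN(T)$. A complete embedding of $G/H$ is then encoded by a complete colored fan in $\aaa$, and it is smooth exactly when each maximal cone is generated by part of a $\ZZ$-basis of $\NNN(T)$ together with the relevant $\alpha^\vee$. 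First I would set up this dictionary and record the horospherical anticanonical formula $-K_X = \sum_i X_i + \sum_{\alpha} a_\alpha D_\alpha$, where the $X_i$ are the $G$-stable divisors and $a_\alpha = \langle 2\rho_P, \alpha^\vee\rangle$.

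Next, the variety is Fano iff $-K_X$ is ample. Rescaling each color image $\alpha^\vee$ by $1/a_\alpha$ (its coefficient in $-K_X$) and keeping each $G$-stable ray generator $x_i$ (coefficient $1$), the ampleness of $-K_X$ translates into the assertion that the convex hull $\Q$ of the points $\{x_i\} \cup \{\alpha^\vee/a_\alpha\}$ is a polytope whose face fan, together with the colors, reproduces the colored fan of $X$. This forces the vertices of $\Q$ to lie in $\NNN(T) \cup \{\alpha^\vee/a_\alpha : \alpha \in \Phi^+_P\}$ with $0$ in its interior, i.e. condition (1); smoothness makes $-K_X$ Cartier, equivalently $\Q^*$ a lattice polytope, i.e. condition (2). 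Conversely any $\Q$ satisfying (1)--(2) recovers, through its face fan and colors, a smooth Fano embedding, which gives the bijection.

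For the moment polytope I would compute the $B$-weights of $H^0(X, -kK_X)$. On a horospherical variety the $B$-eigensections of a $G$-linearised divisor $D = \sum c_i X_i + \sum d_\alpha D_\alpha$ have weights in $\{m : \langle m, x_i\rangle \geq -c_i,\ \langle m, \alpha^\vee\rangle \geq -d_\alpha\}$, all lying in a single coset of $M = \XXX(P/H)$ translated by the weight coming from the colors; for $c_i = 1$, $d_\alpha = a_\alpha$ this translation is exactly $2\rho_P = \sum_{\alpha \in \Phi^+_P}\alpha$. Dividing by $k$ and letting $k \to \infty$ identifies the limiting weight polytope with $2\rho_P$ plus $\{m : \langle m, x_i\rangle \geq -1,\ \langle m, \alpha^\vee/a_\alpha\rangle \geq -1\} = \Q^*$, so that $\Delta^+ = 2\rho_P + \Q^*$.

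Finally, for $m \in \Q^*$ and $\alpha \in \Phi^+_P$ one computes $\langle 2\rho_P + m, \alpha^\vee\rangle = a_\alpha\bigl(1 + \langle m, \alpha^\vee/a_\alpha\rangle\bigr)$, which is nonnegative iff $\langle m, \alpha^\vee/a_\alpha\rangle \geq -1$; since $\Q = (\Q^*)^*$, this holds for every $m \in \Q^*$ precisely when $\alpha^\vee/a_\alpha \in \Q$, so condition (3) is equivalent to the inequalities of $\CCC$ indexed by $\Phi^+_P$ holding on $\Delta^+$. For $\alpha \in \Phi_I \cap \Phi^+$ both $2\rho_P$ and every element of $M$ pair to $0$ against $\alpha^\vee$, so the remaining inequalities of $\CCC$ hold automatically; hence (3) $\Leftrightarrow \Delta^+ \subset \CCC$. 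The interior claims are then immediate: $\Q$ is a polytope containing $0$ in its interior, so its dual $\Q^*$ is again a polytope with $0$ in its interior, whence $2\rho_P \in \operatorname{Int}(\Delta^+)$. The main obstacle is the first step, namely erecting the colored Luna--Vust dictionary and proving the ampleness/Fano criterion in colored form; once that is in place, the moment-polytope identification and the chamber equivalence reduce to the bookkeeping above.
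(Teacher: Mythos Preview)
The paper does not give a proof of this proposition at all: it is stated with the attribution \cite{pasquierthese} and then immediately used. So there is nothing in the paper to compare your argument against; the authors simply import the result from Pasquier's thesis.

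That said, your sketch is the right one and is essentially what Pasquier does. The Luna--Vust dictionary in the horospherical case, the anticanonical formula $-K_X=\sum_i X_i+\sum_\alpha a_\alpha D_\alpha$ with $a_\alpha=\langle 2\rho_P,\alpha^\vee\rangle$, and the identification of the Fano condition with $\Q$ being the convex hull of the rescaled ray generators and color images are precisely the ingredients in \cite{pasquierthese,Pasquier2009}. Your computation of the moment polytope via the $B$-weights of $H^0(X,-kK_X)$ and the translation by $2\rho_P$ is also the standard one. The equivalence of condition~(3) with $\Delta^+\subset\CCC$ is handled correctly: the inequalities for $\alpha\in\Phi^+_P$ become $\alpha^\vee/a_\alpha\in\Q$ by duality, and for simple $\alpha\in I$ one uses that $2\rho_P$ is $W_L$-invariant (so $\langle 2\rho_P,\alpha^\vee\rangle=0$) and that $\Q^*$ sits in the annihilator of the coroots of $L$ (since $\XXX(P/H)$ consists of characters trivial on the derived subgroup of $L$). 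One small point of care: condition~(3) in Definition~\ref{GHref} is stated for \emph{all} $\alpha\in\Phi^+_P$, not just simple ones, whereas the colors are indexed only by simple roots in $\Sigma\setminus I$; your duality argument $\alpha^\vee/a_\alpha\in\Q\Leftrightarrow\langle\,\cdot\,,\alpha^\vee\rangle\ge -a_\alpha$ on $\Q^*$ works root by root, so this is fine, but it is worth saying explicitly that the non-simple inequalities on the $\CCC$ side follow from the simple ones.
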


We fix a horospherical embedding $(X,x)$ of $G/H$ and we denote $P:=N_G(H)$. By taking the restriction to the open $B$-orbit, we have an isomorphism between the $G$-equivariant automorphisms of $X$ and those of $G/H$ :
\begin{equation}\label{Aut_G(X)}
\operatorname{Aut}_G(X) \simeq \operatorname{Aut}_G(G/H) \simeq P/H.
\end{equation}
One can consult \cite{Knop91} which deals with the problem in the more general context of spherical varieties.

\subsection{Associated linearized line bundle}
In this section, we introduce associated linearize line bundle over homogeneous horospherical space. They were first introduced by Delcroix in \cite{Delcroix2}. Let $H$ a  be horospherical subgroup of $G$ and $P=N_G(H)$.

\begin{definition}
A line bundle $\Pi: L\rightarrow G/H$ is $(G \times P/H)$-linearized if there exists an action $\Theta : ( G \times P/H) \times L \rightarrow L$ denoted by $(x,y) \mapsto \Theta_x(y)$ such that
\begin{itemize}
\item[$\bullet$] $\Pi : L \rightarrow G/H $ is a $(G \times P/H)$-equivariant morphism,
\item[$\bullet$] the application induced by the action of $\Theta_{{(e,pH)}} $ between the fibers is linear.
\end{itemize}
\end{definition}
Note that to any $(G \times P/H)$-linearized line bundle, we can associate a character of $P$. Indeed, for any $p \in P$, the action of $(p,pH) \in diag(P)$ is trivial on the trivial class in $G/H$. Thus, $\Theta_{(p,pH)}$ induces a linear isomorphism  between $L_{eH}$ and $L_{eH}$ and therefore a linear representation of dimension 1 of $diag(P) \simeq P$ i.e. there is a character $\chi \in \XXX(P)$ such that
\begin{equation}\label{chixi}
(p,ph) \cdot \xi = \chi(p) \xi,~~ \forall \xi \in L_{eH}.
\end{equation}
Let us consider the projection $\pi :G \longrightarrow G/H$. We can define the pulled-back line bundle $\pi^*L$ over $G$. Since $\pi$ is $G$-equivariant, the line bundle $\pi^*L$ admits a $G$-linearization. In particular, we define a global section $s$ on $\pi^*L$ by chosing an element $s(e) \in (\pi^*L)_e$ and setting
\begin{equation}\label{sdef}
s: g \in G \mapsto g \cdot s(e) \in (\pi^*L)_g.
\end{equation}
Now let us consider the inclusion $\iota : P/H \longrightarrow G/H$. We can define the restriction $\iota^*L$ of the line bundle $L$. Since $\iota$ is equivariant for the action of $P \times P/H$, we obtain that $\iota^*L$ is a $(P \times P/H)$-linearized bundle. Two global sections of $\iota^*L$ can also be defined:
\begin{align}\label{sr}
& s_r : pH \in P/H \mapsto (p,eH) \cdot s'(e) ,\\
& s_l : pH \in P/H \mapsto (e,p^{-1}H) \cdot s'(e),
\end{align}
where the element $s'(e) \in (\iota^*L)_e$ is such that $s'(e)$ and $s(e)$ are mapped to the same  element of $L_{eH}$ by the canonical applications $\pi^*L \rightarrow L$ and $\iota^*L \rightarrow L$. Note that these sections are linked by the formula:
$$
\forall p\in P,~~s_r(pH)= \chi(p) s_l(pH).
$$
We have the following commutative diagram
$$
 \xymatrix{
    \pi^*L \ar[r] \ar[d] & L \ar[d] & \iota^*L \ar[l] \ar[d] \\
    G \ar@/^1pc/[u]^s \ar[r]_\pi & G/H & P/H \ar@/^{-1pc}/[u]_{s_r} \ar@/^1pc/[u]^{s_l} \ar[l]^{\iota} 
  }
$$
Given a Hermitian metric $q$ on a complex line bundle $L$ over a complex manifold $M$ and a local trivialisation $s$ of $L$ over an open subset $U$ of $M$, we define the local potential of $q$ with respect to $s$ by $\varphi : x \in U \mapsto - \ln a_x$ where $a_x= \vert s(x) \vert_q^2$. We can associate, to any Hermitian metric $q$, a $(1,1)$-form $ \omega_q $ called the \textit{curvature of $ q $} by $ \omega_q\vert_U = \sqrt{-1} \partial \overline {\partial} \varphi $ where $ \varphi $ is the local potential. One checks that $ \omega_q \vert_U $ does not depend on the local trivialisation and therefore defines a $ (1,1) $-global form. In addition, one can prove that $ \omega_q \in 2  \pi \, c_1 (L) $. We will also say that \textit{$L $ has positive curvature} if there is a metric $ q $ such that $ \omega_q $ is a Kähler form. Fix a reference Hermitian metric $ q ^ 0 $ on $L$ and for any Hermitian metric $ q $, we define a smooth function $ \psi $ on $ X $, called the \textit{global potential of $ q$ with respect to $ q^0 $} by 
$$
\forall x \in X,\forall \xi \in L_x,~~\vert \xi \vert_q^2 = e^{- \psi(x)} \vert \xi \vert^2_{q^0}.
$$
By definition of the $(1,1)$-forms $\omega_q$ and $\omega_{q^0}$ and by computing in local charts, we see that the function $\psi $ satisfies the following relation:
$$
\omega_{q^0} = \omega_q + \sqrt{-1} \, \partial \overline {\partial} \, \psi.
$$
We refer to \cite{Demailly1} for more details.

Let $ G / H $ be a homogeneous horospherical space, $ L $ a $ (G \times P / H )$-linearized line bundle over $ G / H $ and $ q $ a Hermitian $ K $- invariant metric  on $ L $. We can then consider the Hermitian metric $\pi^*q$ on $\pi^*L$ and define the local potential $\phi$ (which is actually defined on the whole $G$) with respect to the section $s$ of Equation \eqref{sdef} :
$$
\phi : g \in G \mapsto -2 \ln \vert s(g) \vert_{\pi^*q} \in \RR.
$$
We also define the potential $ u: \aaa_1 \rightarrow \RR $ with respect to tje section $s$ of $ \iota^*L $ defined in Equation \eqref{sdef}:
\begin{equation}\label{potudef}
u : x \in \aaa_1 \mapsto -2 \ln \vert s_r(\exp(x)H) \vert_{\iota^*q} \in \RR.
\end{equation}
We have the following relation between these two potentials, using the character $\chi$ defined in equation \eqref{chixi}.
\begin{proposition}[Propostion 2.7 in \cite{Delcroix2}]
For all $k \in K$, $x \in \aaa_1$ and $h \in H$, we have
$$
\phi(k \, \exp(x) \, h ) = u(x) - 2 \ln \vert \chi( \exp(x)h ) \vert.
$$
\end{proposition}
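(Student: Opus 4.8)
The plan is to reduce the identity to the case $g\in P$ and then to unwind the construction of the three sections $s$, $s_r$, $s_l$ together with the metrics $\pi^*q$ and $\iota^*q$, all of which descend from the single $(G\times P/H)$-linearized Hermitian metric $q$ on $L$. \emph{Step 1: killing the $K$-factor.} Since $q$ is $K$-invariant and $\pi\colon G\to G/H$ is $G$-equivariant, the pulled-back metric $\pi^*q$ is invariant under the restriction to $K$ of the $G$-linearization of $\pi^*L$; as moreover the section $s$ of \eqref{sdef} is $G$-equivariant, $s(kg)=k\cdot s(g)$, we get $|s(kg)|_{\pi^*q}=|s(g)|_{\pi^*q}$, hence $\phi(kg)=\phi(g)$ for all $k\in K$ and $g\in G$. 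It therefore suffices to treat $g=\exp(x)\,h$ with $x\in\aaa_1\subset\ttt$ (so $\exp(x)\in T\subset P$) and $h\in H\subset P$.

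\emph{Step 2: the computation on $P$.} I would work through the canonical fibrewise maps $\pi^*L\to L$ and $\iota^*L\to L$, which are isometries for the induced metrics because both $\pi^*q$ and $\iota^*q$ are pullbacks of $q$; write $\xi_0\in L_{eH}$ for the common image of $s(e)$ and $s'(e)$. Transporting the $P$-action on $L$ through the action $(g,pH)\cdot xH=gxp^{-1}H$, the image of $s(\exp(x)h)$ in $L$ is $\big(\exp(x)h,\,eH\big)\cdot\xi_0$, and the crux is the factorisation in $G\times P/H$
$$\big(\exp(x)h,\,eH\big)=\big(\exp(x)h,\,\exp(x)hH\big)\cdot\big(e,\,(\exp(x)h)^{-1}H\big),$$
whose first factor lies in $\mathrm{diag}(P)$ and whose second is a pure $P/H$-translation. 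By \eqref{sr} the second factor sends $\xi_0$ to the image in $L$ of $s_l(\exp(x)hH)$; since $H$ is normal in $P$ (because $P=N_G(H)$), $s_l$ descends to a genuine section of $\iota^*L$, so $s_l(\exp(x)hH)=s_l(\exp(x)H)$, which by the relation $s_r(pH)=\chi(p)\,s_l(pH)$ differs from the section of \eqref{potudef} only by the scalar $\chi(\exp(x))^{-1}$. For the first factor one needs the one small lemma that an element $(p,pH)\in\mathrm{diag}(P)$ acts on the fibre $L_{pH}$, not just on the base fibre $L_{eH}$, by the scalar $\chi(p)$: this is obtained by carrying the defining relation \eqref{chixi} from $L_{eH}$ to $L_{pH}$ along the $G$-linearization. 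Here $\exp(x)hH=\exp(x)H$, so the first factor scales $L_{\exp(x)H}$ by $\chi(\exp(x)h)$. Combining, $|s(\exp(x)h)|_{\pi^*q}=|\chi(\exp(x)h)|\cdot|s_l(\exp(x)H)|_{\iota^*q}$; taking $-2\ln$, using $\chi(\exp(x)h)=\chi(\exp(x))\,\chi(h)$ together with $s_r(pH)=\chi(p)\,s_l(pH)$ to match the two normalisations, and recognising $u(x)$ yields the stated identity.

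\emph{Main obstacle.} Everything above is routine except the bookkeeping: one must keep straight which of $s,s_r,s_l$ is a section of which of the two pulled-back bundles, verify that the natural maps to $L$ are fibrewise isometries, and — above all — compute by which scalar a diagonal element of $P$ acts on a fibre other than the one over the base point $eH$ on which $\chi$ was defined. The $K$-reduction, the well-definedness of $s_l$ on cosets, and the final logarithms are immediate. This is precisely Proposition~2.7 of \cite{Delcroix2}, which one may also simply quote.
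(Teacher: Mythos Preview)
The paper does not give its own proof of this proposition: it is stated purely as a citation (``Proposition 2.7 in \cite{Delcroix2}'') with no accompanying argument, so there is nothing in the present paper to compare your proof against. Your final sentence already identifies the correct course of action here --- simply quote the result from \cite{Delcroix2}.

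That said, your reconstruction is essentially the argument one finds in Delcroix's paper: the $K$-invariance reduction (Step~1) is exactly right, and the computation on $P$ proceeds by comparing the $G$-equivariant section $s$ with the sections $s_r,s_l$ via the character $\chi$. One small point to be careful about in Step~2: the well-definedness of $s_r(pH)=(p,eH)\cdot s'(e)$ as a function of the coset $pH$ (rather than the representative $p$) already uses that $\chi|_H$ is trivial, since for $h\in H$ the element $(h,eH)=(h,hH)\in\mathrm{diag}(P)$ acts on $\xi_0$ by $\chi(h)$. Once this is in hand, your factorisation argument goes through without needing the separate lemma about $\mathrm{diag}(P)$ acting on fibres other than $L_{eH}$: one can compare $s(\exp(x)h)\mapsto(\exp(x)h,eH)\cdot\xi_0$ directly with $s_r(\exp(x)H)\mapsto(\exp(x),eH)\cdot\xi_0$ and read off the scalar $\chi(h)$, then combine with $|\chi(\exp(x))|$ coming from the relation $s_r=\chi\cdot s_l$ (noting that $s_l$, being defined via the $P/H$-action alone, has $q$-norm independent of the representative and equal to $|s_r|/|\chi|$). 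This slightly streamlines the bookkeeping you flag as the main obstacle.
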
   

\subsection{Curvature in horospherical case}

Let us now, recall Delcroix's computation of the curvature of a Hermitian metric on a line bundle over $ G / H $ in an adapted basis. The first step is to define this basis. For this, we identify the tangent space of $G/H $ at
$eH $ with $\ggg/\hhh \simeq \oplus_{\alpha \in \Phi^+_P} \aaa_1 \oplus J \aaa_1 \oplus \CC e_{-\alpha}$ . We get a complex basis of the tangent space $ T_{eH} (G / H )$ as the concatenation of a real basis $ (l_j)_{1 \leq j \leq r} $ of $ \aaa_1 $ with $ (e_ {-\alpha})_{\alpha \in \Phi^+_P} $. On $ P / H $, we can define for $ \xi \in T_{eH} (G / H )$ the holomorphic vector field:
\begin{equation}\label{Rxi}
R \xi : pH \mapsto (H,p^{-1}H) \cdot \xi.
\end{equation}
We then have a complex basis of $ T^{1,0} (P / H) $ given by $ (Rl_j - \sqrt {-1} J Rl_j) / $2 for $ 1 \leq j \leq r$ and $ (R e_ {-\alpha} - \sqrt{ -1} JR e _{-\alpha}) / $2 for all $\alpha \in \Phi^+_P$, and we denote by $ (\gamma_j)_{ 1 \leq j \leq r} \cdot (\gamma_\alpha)_{\alpha \in \Phi^+_P} $ the dual basis. 
\begin{theorem}[Theorem 2.8 in \cite{Delcroix2}]\label{delomega}
Let $\omega$ be the curvature $(1,1)$-form of a $K$-invariant Hermitian metric $q$ on a $(G \times P/H)$-linearized line bundle $L$ over $G/H$, whose associated character is denoted by $\chi$. The form $\omega$ is determined by its restriction to $P/H$, given for any $x \in \aaa_1$ by
$$
\omega_{\exp(x)H}= \sum_{1 \leq j_1,j_2 \leq r} \cfrac{1}{4} \cfrac{\partial^2 u}{\partial l_{j_1} \partial l_{j_2}}(x) \sqrt{-1} \, \gamma_{j_1} \wedge \overline{\gamma}_{j_2} + \sum_{\alpha \in \Phi^+_P} \langle \alpha, \frac{1}{2}\nabla u(x) - t_\chi \rangle \sqrt{-1} \, \gamma_\alpha \wedge \overline{\gamma}_\alpha
$$
where $\nabla u(x) \in \aaa_1$ is the gradient of the function $u$ defined by Equation \eqref{potudef} for the scalar product $( \cdot, \cdot)$.
In addition, with $\operatorname{MA}_\RR(u)=\det(\operatorname{Hess}(u))$,
$$
\omega^n_{\exp(x)H} = \dfrac{\MA_\RR(u)(x)}{4^r 2^{\operatorname{Card}(\Phi^+_P})} \prod_{\alpha \in \Phi^+_P} \langle  \alpha , \nabla u(x) -2 t_{\chi}  \rangle \, \Omega,
$$
where $n$ is the dimension of $G/H$ and
$$
\Omega:= \Big( \bigwedge_{1 \leq j \leq r} \gamma_j \wedge\overline{\gamma}_j \Big)  \wedge   \Big( \bigwedge_{\alpha \in \Phi^+_P} \gamma_\alpha \wedge \overline{\gamma}_\alpha \Big).
$$
\end{theorem}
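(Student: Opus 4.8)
The plan is to reduce the statement to the computation of a single complex Hessian: the polar decomposition fixes the base point, the invariance of the potential trivialises the toric (fibre) directions, and the remaining directions --- those of the flag manifold $G/P$ --- are handled by an $\mathfrak{sl}_2$-computation.

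First I would record the reductions. Since $\pi^*L$ is trivialised by the section $s$ of \eqref{sdef}, the function $\phi=-2\ln\vert s\vert_{\pi^*q}$ is a global potential, so $\pi^*\omega=\sqrt{-1}\,\partial\overline\partial\,\phi$ on $G$; and since $q$ is $K$-invariant, $\omega$ is $K$-invariant, so by Proposition \ref{polar} it is enough to determine the Hermitian form $\omega_{\exp(x)H}$ on $T^{1,0}_{\exp(x)H}(G/H)$ for $x\in\aaa_1$. I would also record two transformation rules for $\phi$: left $K$-invariance $\phi(kg)=\phi(g)$, and $\phi(gh)=\phi(g)-2\ln\vert\chi(h)\vert$ for $h\in H$ --- the latter from $(\pi^*L)_{gh}=(\pi^*L)_g$, $s(gh)=g\cdot(h\cdot s(e))$ and \eqref{chixi} applied to $(h,hH)=(h,eH)\in\operatorname{diag}(P)$ --- which together give $\phi(\exp a)=u(a)-2\langle\chi,a\rangle$ on $\aaa_1$.

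Next I would build an explicit holomorphic chart around $\exp(x)H$. Since $\aaa_1\oplus J\aaa_1$ is a complex subspace of $\ggg$, the map
$$\widetilde{F}\colon(\aaa_1\oplus J\aaa_1)\times\CC^{\Phi^+_P}\longrightarrow G,\qquad (v,(w_\alpha))\longmapsto\exp(v)\,\exp\Big(\sum_{\alpha\in\Phi^+_P}w_\alpha e_{-\alpha}\Big),$$
is holomorphic, and $F:=\pi\circ\widetilde{F}$ is a biholomorphism from a neighbourhood of $(x,0)$ onto a neighbourhood of $\exp(x)H$; the $w$-directions map onto $\ggg/\ppp\cong T_{eP}(G/P)$ and the $v$-directions onto the tangent to the fibre of $G/H\to G/P$. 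Using $\operatorname{Ad}(\exp v)e_{-\alpha}=e^{-\langle\alpha,v\rangle}e_{-\alpha}$ (the pairing extended complex-linearly) to pass to the coordinates $(v_j,\tau_\alpha)$ with $\tau_\alpha:=w_\alpha e^{-\langle\alpha,v\rangle}$, one gets $\widetilde{F}(v,\tau)=\exp(\sum_\alpha\tau_\alpha e_{-\alpha})\exp(v)$ and checks that at the base point $F^*\gamma_j=dv_j$ and $F^*\gamma_\alpha=d\tau_\alpha$ (the $\tfrac12$'s in $\tfrac12(Rl_j-\sqrt{-1}JRl_j)$ and $\tfrac12(Re_{-\alpha}-\sqrt{-1}JRe_{-\alpha})$ cancelling the corresponding factors in $d\widetilde{F}_{(x,0)}$). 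Hence $F^*\omega=\sqrt{-1}\,\partial\overline\partial(\phi\circ\widetilde{F})$, and $\omega_{\exp(x)H}$, in the basis $(\gamma_j)\cup(\gamma_\alpha)$, is the complex Hessian of $\phi\circ\widetilde{F}$ at $(x,0)$ in the coordinates $(v_j,\tau_\alpha)$.

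It then remains to compute $\phi\circ\widetilde{F}$ to second order. Writing $v=a+Jb$ with $a,b\in\aaa_1$, one has $\exp(Jb)\in K$ since $J\aaa_1\subset\kkk$, and moving it to the left gives $\phi\circ\widetilde{F}(v,\tau)=\phi\big(\exp(\sum_\alpha\tau_\alpha e^{\sqrt{-1}\langle\alpha,b\rangle}e_{-\alpha})\exp(a)\big)$; by left $K$-invariance this is a function of $a$ and of $(\tau_\alpha e^{\sqrt{-1}\langle\alpha,b\rangle})$, equal at $\tau=0$ to $u(a)-2\langle\chi,a\rangle$, which already yields the torus block $\tfrac14\,\partial^2u/\partial l_{j_1}\partial l_{j_2}(x)$ and, together with invariance under the isotropy $K\cap H$, shows there are no mixed $\gamma_j\wedge\overline{\gamma}_\alpha$ terms. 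The step I expect to be the real obstacle is the quadratic behaviour in the $\tau_\alpha$: restricting to each root line and working inside the $\mathfrak{sl}_2$-triple $\langle e_\alpha,e_{-\alpha},t_\alpha\rangle$ --- where $e_\alpha\in\hhh$ since $\alpha\in\Phi_P$ --- one polar-decomposes $\exp(\zeta e_{-\alpha})\exp(a)$ using the associated $SU(2)\subset K$ and the right $H$-rule, obtaining, modulo a pluriharmonic term and $O(\vert\tau\vert^3)$,
$$\phi\circ\widetilde{F}=u(a)-2\langle\chi,a\rangle+\sum_{\alpha\in\Phi^+_P}\Big\langle\alpha,\ \tfrac12\nabla u(a)-t_\chi\Big\rangle\,\vert\tau_\alpha\vert^2.$$
Reading off complex Hessians gives the stated formula for $\omega_{\exp(x)H}$; and since the associated Hermitian matrix is block diagonal in the basis $(\gamma_j)\cup(\gamma_\alpha)$, its determinant equals $\det(\tfrac14\operatorname{Hess}u)\prod_{\alpha}\langle\alpha,\tfrac12\nabla u-t_\chi\rangle=\MA_\RR(u)\prod_{\alpha\in\Phi^+_P}\langle\alpha,\nabla u-2t_\chi\rangle\big/(4^r\,2^{\operatorname{Card}(\Phi^+_P)})$, which gives the formula for $\omega^n$ after taking the top exterior power.
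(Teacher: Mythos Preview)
The paper does not give a proof of this theorem: it is stated as Theorem~2.8 of \cite{Delcroix2} and quoted without argument, as part of the background material in Section~\ref{blablabla}. There is therefore no proof in the present paper to compare your proposal against.

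That said, your outline is essentially the strategy of Delcroix's original proof: reduce by $K$-invariance and the polar decomposition (Proposition~\ref{polar}) to the points $\exp(x)H$, trivialise by the global section $s$ so that $\pi^*\omega=\sqrt{-1}\,\partial\overline\partial\phi$, build an adapted holomorphic chart separating the toric fibre directions from the $G/P$ directions, and compute the complex Hessian block by block. The torus block and the use of the transformation rule $\phi(k\exp(x)h)=u(x)-2\ln\vert\chi(\exp(x)h)\vert$ are handled correctly. Two points would need to be made explicit before this could stand as a proof. First, you only argue the vanishing of the mixed $\gamma_j\wedge\overline\gamma_\alpha$ terms; the vanishing of the off-diagonal root terms $\gamma_\alpha\wedge\overline\gamma_\beta$ for $\alpha\neq\beta$ also has to be checked (it follows from invariance under the compact torus $T\cap K$, which rotates each $\tau_\alpha$ with a distinct character). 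Second, the $\mathfrak{sl}_2$ step you flag as ``the real obstacle'' is precisely where the content lies: one must carry out the $KAH$-factorisation of $\exp(\zeta e_{-\alpha})\exp(a)$ inside the $SL_2$ attached to $\alpha$ (using that $e_\alpha\in\hhh$), extract the $\aaa$-part to second order in $\zeta$, and feed it into $u-2\langle\chi,\cdot\rangle$ to obtain the coefficient $\langle\alpha,\tfrac12\nabla u(x)-t_\chi\rangle$. As written you have correctly identified the architecture and the one nontrivial computation, but that computation is asserted rather than performed.
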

Moreover, by choosing $L=K_{G/H}^{-1}$ $s(e)$ appropriately, with $s_r(e)$ defined in equation \eqref{sr}, we have 
\begin{equation}\label{omegansr}
\omega^n_{\exp(x)H} = \dfrac{\MA_\RR(u)(x)}{4^r 2^{\operatorname{Card}(\Phi^+_P})} \prod_{\alpha \in \Phi^+_P} \langle  \alpha , \nabla u(x) -2 t_{\chi} \rangle s_r^{-1}  \wedge \overline{s_r}^{-1}.
\end{equation}
In order to explain the choice of $s(e)$, let
$$
S:= \Big( \bigwedge_{1 \leq j \leq r} \gamma_i \Big) \wedge \Big( \bigwedge_{\alpha \in \Phi^+_P} \gamma_\alpha \Big), 
$$
which therefore defines a section of the  line bundle $ \iota^*K_{G/H}$. In particular, we have 
$
S \vert_{eH} \in (\iota^*K_{G/H})_{eH},
$
and using the following isomorphisms
$$
(K_{P/H})_{eH} \simeq (\iota^*K_{G/H})_{eH} \simeq (K_{G/H})_{eH} \simeq (\pi^*K_{G/H})_{eH},
$$
we denote via these isomorphisms $s(e):=S \vert_{eH}$. We then obtain, by the definition of $\Omega$ and $s_r$ (Equations \eqref{Rxi} and \eqref{sr}), that
$$
 \forall x \in \aaa_1,~~s_r(\exp(x)H)=S \vert_{\exp(x)H},
$$
and we can conclude. We will constantly use this choice afterwards.

In this section, we consider a horospherical manifold $X$ with reductive group $G$ and horospherical subgroup $H$. Note $P=N_G(H)$ the normalizer of $H$ in $G$. Recall that $P$ is a parabolic subgroup and that there is another parabolic subgroup $Q$ such that there is a Levi subgroup $L$ with $P \cap Q= L$. From now, we consider that the parabolic subgroup $P$ contains the opposite Borel subgroup $B^-$ i.e. $\Phi^+_P= \Phi^- \backslash \Phi_L = - ( \Phi^+ \backslash \Phi_L) $. Recall that we introduced the moment polytope $ \Delta^+ $ of $X$ with respect to the Borel subgroup $B$. Now, we define 
\begin{equation}\label{polytopeDelta} 
\Delta: = -2 \rho_P - \Delta^+
\end{equation}
and \textit{the support function} $v_{2 \Delta}$ by
\begin{equation}\label{fctsupport}
v_{2 \Delta}: x \in \aaa^*  \sup_{p \in    2 \Delta} (x,p)_{e} \in \RR,
\end{equation}
where $(\cdot,\cdot)_e$ is the usual euclidian scalar product on $\aaa^*$ (for any basis of $\aaa_1$). This function satisfies the following properties:
\begin{itemize}
\item[$\bullet$] $\forall x \in \aaa_1^*,~~\forall \alpha \in \RR^+_*,~~ v_{2 \Delta}(\alpha \, x) = \alpha v(x)$
\item[$\bullet$] $\forall (x,y) \in (\aaa_1^*)^2,~~ v_{2 \Delta}(x+y) \leq v_{2 \Delta}(x) + v_{2 \Delta}(y)$
\item[$\bullet$] $\forall x \in \aaa_1^*, ~~ v(x) \leq d \, \Vert x \Vert $ where $d= \sup_{p \in 2 \Delta} \Vert p \Vert$ and $\Vert \cdot \Vert$ the  norm on $\aaa$ associated with the scalar product $(\cdot,\cdot)_{e}$
\end{itemize}
In addition, if $x \in \aaa_1^*$ is such that 
$\Vert x  \Vert=1$, then $2 \Delta$ is contained in the half-space $\lbrace y \in \aaa_1^*~:~ (x,y) \leq v_{2 \Delta}(x) \rbrace$ and at least one point of $2 \Delta$ is in the border of this half-space i.e. in $\lbrace y \in \aaa_1^*~~:~ (x,y) = v_{2 \Delta}(x) \rbrace $.

\begin{proposition}[Proposition $4.2$ in \cite{Delcroix2}]\label{potconv}
Let $ q $ be $K$-invariant Hermitian metric with positive curvature on the line bundle $ K_X^{-1} $ and we denote $\chi$ the character of the restriction of $K_X^{-1}$ in $G/H$ and let $ u: \aaa_1 \rightarrow \RR $ be the convex potential defined by Equation \eqref{potudef}. Then $u $ is a smooth and strictly convex function such that the application $ d u: x \mapsto d_x u \in \aaa_1^*$ verifies $\text{im}(du)=2 \Delta $ and the function $ u-v_{2 \Delta} $ is bounded on $ \aaa_1 $. In particular, the polytope $ 2 \, \Delta$ is independent of the chosen metric $q$.
\end{proposition}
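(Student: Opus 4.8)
The plan is to read off smoothness and strict convexity of $u$ directly from the curvature formula, and then to pin down $\operatorname{im}(du)$ together with the asymptotics of $u$ by reducing to one explicitly chosen metric, for which the computation turns into a convex‑geometry statement about log‑sum‑exp functions. First, by \eqref{potudef} the potential $u$ is the composition of the smooth maps $x\mapsto\exp(x)H$, $s_r$, and $\xi\mapsto-2\ln|\xi|_{\iota^*q}$, so $u\in C^\infty(\aaa_1)$. Since $\omega_q$ is a Kähler form, the Hermitian form it defines at $\exp(x)H$ is positive definite; by Theorem~\ref{delomega} this form is block‑diagonal in the basis $(\gamma_j),(\gamma_\alpha)$, with blocks $\tfrac{1}{4}\operatorname{Hess}(u)(x)$ and the scalars $\langle\alpha,\tfrac{1}{2}\nabla u(x)-t_\chi\rangle$, $\alpha\in\Phi^+_P$. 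Hence $\operatorname{Hess}(u)(x)>0$ for every $x\in\aaa_1$, i.e. $u$ is strictly convex, and moreover $\langle\alpha,\nabla u(x)-2t_\chi\rangle>0$ for all $\alpha\in\Phi^+_P$. Being the gradient of a smooth strictly convex function on $\aaa_1\simeq\RR^r$, the map $du$ is a diffeomorphism onto an open convex subset of $\aaa_1^*$, so it only remains to prove $\overline{\operatorname{im}(du)}=2\Delta$ and that $u-v_{2\Delta}$ is bounded.

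Next I would reduce to a single metric. If $q'$ is another $K$‑invariant metric with positive curvature on $K_X^{-1}$, then $q'=e^{-\psi}q$ with $\psi\in C^\infty(X)$, hence bounded; since $q$ and $q'$ induce the same character $\chi$, the associated potentials satisfy $u'(x)-u(x)=\psi(\exp(x)H)$, which is bounded on $\aaa_1$. A bounded perturbation of a smooth strictly convex function on $\RR^r$ changes neither the closure of the image of its gradient nor the boundedness of the difference with $v_{2\Delta}$, so it is enough to establish the statement for one convenient $q$.

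For that metric I would take the one on $K_X^{-1}$ obtained, via Proposition~\ref{roro1}, from the projective embedding of $X$ attached to a suitable multiple of the $G/H$‑reflective polytope $\Q$ (equivalently of $\Delta^+=2\rho_P+\Q^*$): on the open orbit, the Fubini–Study potential $\phi$ on $G$ is, up to an affine function, the logarithm of a finite sum of squared matrix coefficients of $G$‑representations, each of which on the torus part $g=\exp(x)$, $x\in\aaa_1$, is a finite sum of exponentials $e^{2\langle x,\mu\rangle}$ with $\mu$ among the weights of those representations. Feeding this into the relation between $\phi$ and $u$ (Proposition~2.7 of \cite{Delcroix2}), which only adds the affine term $2\langle t_\chi,x\rangle$, one gets $u(x)=\ln\sum_p e^{2\langle x,p\rangle}+O(1)$ where the $p$ range over the weights shifted by a fixed vector depending on $\chi$. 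Writing $\Delta'$ for the convex hull of these shifted weights, a direct computation gives $\nabla u(\aaa_1)=\operatorname{Int}(2\Delta')$ and $u-v_{2\Delta'}$ bounded, so the proposition reduces to the single combinatorial identity $\Delta'=\Delta$; the last sentence of the statement is then automatic, since $\Delta$ is defined from $\Delta^+$ alone. (As a cross‑check on the inclusion $\operatorname{im}(du)\subseteq 2\Delta$ one may instead use the $K$‑moment map: by the polar decomposition, Proposition~\ref{polar}, every $K$‑orbit meets $\{\exp(x)H:x\in\aaa_1\}$, on which the moment map is $\nabla u$ up to an affine normalization, and its image is governed by Kirwan's polytope $\Delta^+$.)

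The step I expect to be the main obstacle is precisely the identity $\Delta'=\Delta$. It forces one to reconcile Delcroix's normalization of the curvature — the factor $\tfrac{1}{2}$ and the shift $t_\chi$ in Theorem~\ref{delomega}, with $t_\chi$ standing for $2\rho_P$ up to sign — the definition of $\Delta^+$ as a Kirwan moment polytope with respect to $B$, and the convention $\Phi^+_P=\Phi^-\setminus\Phi_L$, which replaces $B$ by $B^-$ and thereby introduces a sign, so that all the translations and the Weyl twist cancel and one lands precisely on $\Delta=-2\rho_P-\Delta^+$ rather than on a translate or reflection of it. Everything else is either formal (smoothness, strict convexity, reduction to one metric) or routine convex analysis (log‑sum‑exp versus support function).
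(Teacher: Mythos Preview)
The paper does not give its own proof of this proposition: it is quoted verbatim as Proposition~4.2 of \cite{Delcroix2} and used as a black box. So there is nothing in the present paper to compare your argument against; the benchmark is Delcroix's proof, which the paper does not reproduce.

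As for the argument you sketch, its architecture is the natural one and, in outline, coincides with how this result is actually established in \cite{Delcroix2}: positivity of the curvature plus Theorem~\ref{delomega} gives smoothness and strict convexity of $u$ (and the auxiliary positivity $\langle\alpha,\nabla u-2t_\chi\rangle>0$); any two admissible potentials differ by a bounded function, so the asymptotics and the image of $du$ are metric-independent; and for one explicit metric coming from a projective embedding the potential is, up to $O(1)$, a log--sum--exp over the lattice points of a polytope, whence $\operatorname{im}(du)=\operatorname{Int}(2\Delta)$ and $u-v_{2\Delta}$ bounded. Two small points to tighten. First, in the reduction step, the claim ``a bounded perturbation does not change the closure of the gradient image'' is true for \emph{convex} functions (it is the statement that the domain of the Legendre transform depends only on the function modulo bounded terms), but not for arbitrary smooth perturbations; you are implicitly using that $u'$ is also a potential of a positive curvature metric, hence itself convex, so state that explicitly. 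Second, you are right that the only substantive work is the bookkeeping $\Delta'=\Delta$: matching the Fubini--Study weights (after the shift by $t_\chi$) with $-2\rho_P-\Delta^+$ under the convention $\Phi_P^+\subset\Phi^-(B)$. That is precisely what Delcroix carries out; your proposal correctly isolates it as the crux but does not perform it, so as written your proof is a faithful road map rather than a complete argument.
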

Since $2 \rho_{Q} = -2 \rho_{P}$ belongs to $\operatorname{Int}(\Delta^+)$ by Proposition \ref{roro1}, we have
\begin{equation}\label{0inD}
0 \in \operatorname{Int}(\Delta).
\end{equation}
Recall that
$
\Delta^+ \subset \CCC,
$
where $\CCC$ is the Weyl chamber for the Borel subgroup $B$ (see Proposition \ref{roro1}). Thus 
$$
\forall \alpha \in \Phi^+(B), \forall p \in \Delta^+, ~~ \langle p , \alpha^\vee \rangle \geq 0.
$$
The latter can still be written
$$
\forall \alpha \in \Phi^+(B), \forall p\in  \Delta^+, ~~ ( p, \alpha) \geq 0.
$$
Since $\Phi^+_P \subset \Phi^{+}(B^{-})=-\Phi^{+}(B)$, we then have
$$
\forall \alpha \in \Phi^+_P, \forall p \in -\Delta^+, ~~ ( p, \alpha) \geq 0.
$$
Hence by compactness of $\Delta^+$, there exists $f'>0$ such that
\begin{equation}\label{f>0}
\forall \alpha \in \Phi^+_P, ~~\forall p \in -\Delta^+, ~~ 0 \leq ( \alpha, p ) \leq f'.  
\end{equation}

\section{Kähler-Ricci solitons in the horospherical case}\label{setuphoro}

\subsection{Definition} Let $(M,\omega)$ be a compact Kähler manifold. Recall that the Kähler metric $g$ and the Kähler form $\omega$ can be written locally 
\begin{equation}\label{metkal}
g= g_{i \overline{j}} ~ dz^{i} \otimes d \overline{z}^j
\end{equation}
\begin{equation}\label{forkala}
\omega= \sqrt{-1} \, g_{i \overline{j}} ~ dz^{i} \wedge d \overline{z}^j.
\end{equation}
Now, the \textit{Ricci form} is the real $(1,1)$-form defined locally by
\begin{equation}\label{forTR}
\Ric(\omega)= \sqrt{-1} R_{i \overline{j}} ~ dz^{i} \wedge d \overline{z}^j,~~ R_{i \overline{j}}= - \partial_i \partial_{\overline{j}} \log \det( g_{k \overline{l}}).
\end{equation}
Recall that it can be written globally as
\begin{equation}\label{Ric=omegan}
\Ric(\omega)= \sqrt{-1}\, \partial \overline{\partial} \, \log \omega^n.
\end{equation}
and it satisfies 
\begin{equation}\label{Ricc_1}
\Ric(\omega) \in 2 \pi \, c_1(M),
\end{equation}

Recall that a compact Kähler manifold is called \textit{Fano} if its first Chern Class is positive i.e. $c_1(M)>0$. On a Fano compact Kähler manifold $M$, a Kähler metric $g'$ is a \textit{Kähler-Ricci soliton} if its Kähler form $\omega_{g'}$ satisfies :
$$
\operatorname{Ric}(\omega_{g'}) - \omega_{g'} = \LLL_X \omega_{g'},
$$
where $\LLL_X \omega_{g'}$ is the Lie derivative of $\omega_{g'}$ along a holomorphic vector field $X$ on $M$. Usually, we denote the Kähler-Ricci soliton by the pair $(g',X)$ and $X$ is called the \textit{solitonic vector field}. We immediately note that if $X=0$ then $g'$ is a Kähler-Einstein metric. When $X \neq 0$, we say that the Kähler-Ricci soliton is \textit{non-trivial}.  By abuse, we will say that $g'$ or $\omega_{g'}$ is a Kähler-Ricci soliton if there exists a holomorphic vectors field on $M$ such that $(g,X)$ is Kähler-Ricci soliton.

\subsection{Horospherical case}
Let us fix a connected Fano compact Kähler manifold $M$. Let us recall (see for instance \cite{dercalabi} for details on real and complex vector fields) that the group of complex automorphisms 
$\operatorname{Aut}(M)$ of $M$ is a finite dimensional Lie group whose Lie algebra is the set, denoted $ \eta^\RR(M)$, of holomorphic real vector fields (see Theorem 1.1 in Chapter $3$ of \cite{Kobayashi2}). If $K$ is a maximal compact subgroup of the connected identity component $\operatorname{Aut}^{\circ}(M)$ of $\operatorname{Aut}(M)$ then we have, see \cite{Fujiki1978}, that
$$
\operatorname{Aut}^{\circ}(M) = \operatorname{Aut}_r(M) \ltimes R_u,
$$
where $\operatorname{Aut}_r(M)$ is a reductive subgroup of $\operatorname{Aut}^{\circ}(M)$ and the complexification of $K$ and $R_u$ the unipotent radical of $\operatorname{Aut}^{\circ}(M)$. In addition, if we denote by $\eta^\RR(M)$, $\eta_r^\RR(M)$, $\eta_u^\RR(M)$ and $\kkk$ the Lie algebras of $\operatorname{Aut}(M),\operatorname{Aut}_r(M),R_u$ and $K$ respectively, then we have
$$
\eta^\RR(M)= \eta_r^\RR(M) \oplus \eta_u^\RR(M).
$$
Recall that  $X \in \eta^\RR(M)$ if and only if $\LLL_X \omega$=0. Moreover, if we denote by $\eta(M)$ the Lie algebra of the complex holomorphic vector fields i.e.~the holomorphic sections of the complex vector bundle $T^{1,0} M$, then there is an isomorphism between $\eta^\RR(M)$ and $\eta(M)$ given by $X \mapsto X^{1,0}$. If we denote by $\eta_r(M)$ and $\eta_u(M)$ the image of $\eta_r^\RR(M)$ and $\eta_u^\RR(M)$ respectively by the previous isomorphism, we then obtain a decomposition 
$$
\eta(M)= \eta_r(M) \oplus \eta_u(M).
$$
Assume that $(M,x)$ is a horospherical embedding under the action of the reductive group $\operatorname{Aut}_r(M)=:G$ and that $x \in M$ is such that its isotropy group $H$ in $G$ is a horospherical subgroup in $G$ containing the unipotent radical of the opposite Borel subgroup $B^-$. Let $P:=N_G(H)$. Using the decompositions in section \ref{blablabla}, with $\ggg= \eta^{\RR}_r(M)$ the Lie algebra of $G$, we have
$$
\ggg = \aaa_1 \oplus \aaa_0 \oplus J \aaa_1 \oplus J \aaa_0 \oplus \bigoplus_{\alpha \in \Phi^+_P} \ggg_\alpha \oplus \bigoplus_{\alpha \in \Phi_I} \ggg_\alpha \oplus \bigoplus_{\alpha \in -\Phi^+_P} \ggg_\alpha.
$$
In particular, the Lie algebra of the Lie group $\operatorname{Aut}_G(M)$ of the $G$-equivariant automorphisms of $M$ is identified with the Lie algebra of $P/H$ which corresponds to the factor $\aaa_1 \oplus J \aaa_1$ in the previous decomposition. Note that, by definition of $\operatorname{Aut}_G(M)$, this Lie algebra also identifies with the center $\zzz(\ggg)$ of $\ggg$.

We fix a Riemanian metric $g_0$ with Kähler form $\omega_{0} \in 2 \pi \, c_1(M) $ on M. By the $\partial
\overline{\partial}$-lemma, there is a unique function $h$ in $\C^{\infty}(M,\RR)$ such that
\begin{equation}\label{Dada1}
\Ric(\omega_{0}) - \omega_{0} = \sqrt{-1} \partial
\overline{\partial} h, ~~ \int_M e^h \omega_{0}^n = \int_M \omega_{0}^n.
\end{equation}
In addition, if we fix a Hermitian metric $ m^0 $ on $K_M^{-1} $ such that $ \omega_{m^0} = \omega_{g^0} $, then we can define a volume $ dV $ on $M$ given in a local trivialisation $ s $ of $ K_M^{-1} $ by
$$
dV= \vert s \vert_{m^0} \, s^{-1} \wedge \overline{s^{-1}} = e^{-\varphi} s^{-1} \wedge \overline{s^{-1}},
$$ 
where $\varphi$ is the local potential with respect to the trivialization $s$. Up to an additive constant, $ h $ is the logarithm of the potential of $ dV $ with respect to $ \omega^n_{g^0} $, so we renormalize in order to have
\begin{equation}\label{renormal}
e^{h} \omega_{g^0}^n = dV.
\end{equation}
Indeed, by writing locally Equality \eqref{Dada1}, we get that
$$
\partial \overline{\partial} \left( \ln \det((g_0)_{i \overline{j}}) +h \right) = \partial \overline{\partial} \left( \ln \det(\varphi_{i \overline{j}}) +h \right) = \partial \overline{\partial} \varphi,
$$
where $\varphi$ is the local potential in this open set. This last equation can finally be written globally 
$$
\partial \overline{\partial} \left( \ln e^h \, \cfrac{\omega^n_{g^0}}{dV} \right)=0. 
$$
We conclude by using the maximum principle.

\subsection{Determination of the solitonic vector field}

The first step in order to prove the existence of a Kähler-Ricci soliton is to determine the solitonic vector field. To do this, we use the Futaki invariant. We have the following result (see proposition $2.1$ in \cite{TZ2}):
\begin{proposition}\label{Fnuletar}
There exists a unique complex holomorphic vector field $X \in \eta_r(M)$ with $\operatorname{Im}(X) \in \kkk$ such that the Futaki invariant of $X$, noted $F_X : \eta(M) \rightarrow \CC$, vanishes on $\eta_r(M)$. Moreover, $X$ is equal to zero or belongs to the center of $\eta_r(M)$, and we have
$$
\forall (u,v) \in \eta_r(M) \times \eta(M),~~F_X([u,v])=0.
$$
In particular $F_X(\cdot)$ is a Lie character on $\eta_r(M)$.
\end{proposition}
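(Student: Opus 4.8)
The plan is to reproduce the argument of Proposition 2.1 in \cite{TZ2}, the vector field $X$ obtained here being the candidate solitonic vector field used throughout the rest of the paper. Given a holomorphic vector field $X$ with $\operatorname{Im}(X)\in\kkk$, fix a $K$-invariant K\"ahler form $\omega\in 2\pi c_1(M)$, let $h=h_\omega$ be the Ricci potential of \eqref{Dada1}, and let $\theta_X$ be the \emph{real} potential determined by $\LLL_X\omega=\sqrt{-1}\,\partial\overline{\partial}\,\theta_X$ and normalized by $\int_M\theta_X\,\omega^n=0$, so that $X\mapsto\theta_X$ is $\RR$-linear; put $F_X(v)=\int_M v(h-\theta_X)\,e^{\theta_X}\,\omega^n$ for $v\in\eta(M)$, a quantity whose vanishing does not depend on the normalization chosen for $\theta_X$. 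The first step is the structural fact, due to Tian--Zhu, that $F_X$ is independent of the $K$-invariant representative $\omega$: one differentiates $F_X$ along a path $\omega_s=\omega+\sqrt{-1}\,\partial\overline{\partial}\,\psi_s$ of such forms and integrates by parts, using that $v$ is holomorphic.

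Next I recast the equation $F_X|_{\eta_r(M)}=0$ as a convex variational problem on the center. Write $\zzz:=\zzz(\eta_r(M))$, which in our situation equals $\aaa_1\oplus J\aaa_1$, and let $W\subset\zzz$ be the real subspace of those $X$ with $\operatorname{Im}(X)\in\kkk$. On $W$ consider
\[
\Phi(X)\;=\;\log\Bigl(\tfrac{1}{V}\int_M e^{\theta_X}\,\omega^n\Bigr),\qquad V:=\int_M\omega^n.
\]
Because $X\mapsto\theta_X$ is linear, $\Phi$ is the logarithm of an integral of exponentials of an affine family, hence convex by H\"older's inequality; it is \emph{strictly} convex because $\theta_X$ is non-constant whenever $X\neq 0$ (a constant potential forces $i_X\omega=0$, i.e.\ $X=0$). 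A computation — an integration by parts using the holomorphicity of $Y$, carried out in \cite{TZ2} — identifies the differential of $\Phi$ with the map $X\mapsto F_X|_\zzz$, up to a positive constant. Finally $\Phi$ is \emph{proper}: for $X\neq 0$, the conditions $\int_M\theta_X\,\omega^n=0$ and $\theta_X\not\equiv 0$ force $\sup_M\theta_X>0$, whence $\Phi(tX)\to+\infty$ as $t\to+\infty$ (in the horospherical case this positivity is exactly what $0\in\operatorname{Int}(\Delta)$, equation \eqref{0inD}, encodes). A strictly convex proper function on $W$ has a unique minimum $X_0$, and $d\Phi|_{X_0}=0$ gives $F_{X_0}|_\zzz=0$.

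It remains to upgrade this to $F_{X_0}|_{\eta_r(M)}=0$, to prove the commutator identity, and to obtain uniqueness. Since $X_0$ lies in the center, the torus $K_{X_0}$ generated by $\operatorname{Im}(X_0)$ is central in $\operatorname{Aut}_r(M)$, so $\sigma^*\omega$ is again $K_{X_0}$-invariant for every $\sigma\in\operatorname{Aut}_r(M)$; by metric-independence $F_{X_0}[\sigma^*\omega]=F_{X_0}[\omega]$, while the equivariance of the construction gives $F_{X_0}^{\sigma^*\omega}(v)=F_{X_0}^{\omega}(\operatorname{Ad}_\sigma v)$. Taking $\sigma=\exp(tu)$ with $u\in\eta_r(M)$ and differentiating at $t=0$ yields $F_{X_0}([u,v])=0$ for all $u\in\eta_r(M)$, $v\in\eta(M)$ — the last displayed identity — so $F_{X_0}$ is a Lie character on $\eta_r(M)$, vanishing on $[\eta_r(M),\eta_r(M)]$; since $\eta_r(M)=\zzz\oplus[\eta_r(M),\eta_r(M)]$ we conclude $F_{X_0}|_{\eta_r(M)}=0$. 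For uniqueness, strict convexity of $\Phi$ reduces matters to showing that any holomorphic vector field $X'$ with $\operatorname{Im}(X')\in\kkk$ and $F_{X'}|_{\eta_r(M)}=0$ is central; this is proved, as in \cite{TZ1,TZ2}, by conjugating $\operatorname{Im}(X')$ into a maximal torus of $\kkk$ and running the same convexity argument on the associated larger space. The possibility $X_0=0$ is the K\"ahler--Einstein case; otherwise $X_0\in\zzz$ by construction.

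The step I expect to be the genuine obstacle is the packaging of $F_X$ as the differential (up to a positive factor) of a strictly convex and proper functional on $W$: this bundles together the metric-independence of $F_X$ (the substance of Tian--Zhu's new holomorphic invariant), the integration-by-parts identity relating $d\Phi$ to $F$, and the properness of $\Phi$. Once these are in hand, the existence and uniqueness of $X_0$ and the character and commutator properties follow formally from convexity and from equivariance under $\operatorname{Aut}_r(M)$.
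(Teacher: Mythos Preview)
The paper gives no proof of this proposition; it simply records it as Proposition~2.1 of \cite{TZ2}, and your sketch is a faithful outline of precisely that Tian--Zhu argument (strictly convex proper functional on an abelian subalgebra whose differential is $F_X$, unique critical point, commutator identity from metric-independence plus $\operatorname{Aut}_r(M)$-equivariance, centrality via Weyl invariance). One small remark: the aside tying properness of $\Phi$ to $0\in\operatorname{Int}(\Delta)$ is a horospherical specialization and is unnecessary for the general statement --- the elementary argument you give (that $\int_M\theta_X\,\omega^n=0$ with $\theta_X\not\equiv 0$ forces $\sup_M\theta_X>0$) already establishes properness on any Fano manifold.
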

By applying this theorem to the horospherical case, we obtain the following result
\begin{proposition}\label{formX}
The vector field $ X \in \eta_r(M)=\ggg^{1,0}$ given by proposition \ref{Fnuletar} has the following form:
$$
X = \xi - \sqrt{-1} J \,\xi \text{ where } \xi \in \aaa_1.
$$
\end{proposition}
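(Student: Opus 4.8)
The plan is to combine the two structural facts about $X$ provided by Proposition \ref{Fnuletar} — that $X$ lies in the center of $\eta_r(M)$ (or vanishes) and that $\operatorname{Im}(X)\in\kkk$ — with the explicit description of the center of $\ggg$ recalled just above. If $X=0$ we take $\xi=0$, so from now on assume $X\neq 0$, hence $X$ is a nonzero element of the center of the complex Lie algebra $\eta_r(M)=\ggg^{1,0}$. The center of $\ggg^{1,0}$ is the image $\zzz(\ggg)^{1,0}$ of the real center $\zzz(\ggg)$ under the isomorphism $Y\mapsto Y^{1,0}$ identifying $\eta_r^\RR(M)$ with $\eta_r(M)$; and by the identification $\operatorname{Aut}_G(M)\simeq P/H$ recalled above, $\zzz(\ggg)$ is precisely the real subspace $\aaa_1\oplus J\aaa_1$ appearing in the root-space decomposition of $\ggg$. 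Thus there exist $\xi,\xi'\in\aaa_1$ with $X=(\xi+J\xi')^{1,0}$, that is $X=\tfrac12\big((\xi+J\xi')-\sqrt{-1}\,J(\xi+J\xi')\big)$ up to the normalization constant implicit in the identification $\eta_r^\RR(M)\simeq\eta_r(M)$.

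Next I would use the condition $\operatorname{Im}(X)\in\kkk$ to eliminate $\xi'$. Writing $Y:=\xi+J\xi'$, a direct computation with the real/complex vector field dictionary gives $\operatorname{Im}(Y^{1,0})=-\tfrac12\,JY$, so $\operatorname{Im}(X)\in\kkk$ is equivalent to $JY\in\kkk$. Now $JY=J\xi-\xi'$. Since $\aaa_1\subset\aaa=\ttt\cap J\kkk$, for $\xi\in\aaa_1$ we may write $\xi=Jk$ with $k\in\kkk$, whence $J\xi=-k\in\kkk$; therefore $JY\in\kkk$ forces $\xi'\in\kkk$. But $\xi'\in\aaa_1\subset\aaa\subset J\kkk$ as well, and the sum $\ggg=\kkk\oplus J\kkk$ is direct, so $\xi'=0$. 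Hence $Y=\xi\in\aaa_1$ and $X=\xi-\sqrt{-1}\,J\xi$ after absorbing the normalization constant into $\xi$, which is the claim.

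This argument is essentially bookkeeping, and I do not expect a genuine obstacle. The two points that require care are (i) checking that the center of $\eta_r(M)$, as a complex Lie algebra, is exactly the $(1,0)$-part of $\aaa_1\oplus J\aaa_1$ — this is where the identification $\operatorname{Aut}_G(M)\simeq P/H$ and the decomposition of $\ggg$ from Section \ref{blablabla} enter — and (ii) keeping the real/complex vector field conventions consistent so that ``$\operatorname{Im}(X)\in\kkk$'' translates into the correct linear condition on $Y$; the crucial linear-algebra input there is that $\aaa_1\subset J\kkk$ while $J\aaa_1\subset\kkk$, which is exactly what makes the imaginary-part condition single out the $\aaa_1$-summand.
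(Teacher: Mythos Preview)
Your argument is correct. The paper does not actually prove this proposition in the text: its entire proof is the sentence ``See proposition 3.2 of \cite{Delgove}'', so there is no in-paper argument to compare against. What you have written is a clean, self-contained proof using precisely the two facts the paper has set up --- the identification $\zzz(\ggg)=\aaa_1\oplus J\aaa_1$ stated in Section~\ref{setuphoro} and the condition $\operatorname{Im}(X)\in\kkk$ from Proposition~\ref{Fnuletar} --- together with the elementary observation that $\aaa_1\subset J\kkk$ while $J\aaa_1\subset\kkk$ in the Cartan decomposition $\ggg=\kkk\oplus J\kkk$, which forces the $J\aaa_1$-component to vanish. The only cosmetic point is the factor $\tfrac12$ coming from $Y^{1,0}=\tfrac12(Y-\sqrt{-1}JY)$, which, as you say, is harmlessly absorbed into $\xi$.
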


\begin{proof}
See proposition 3.2 of \cite{Delgove}
\end{proof}

\section{Convergence of the Kähler-Ricci Flow}

\subsection{Preliminaries}
Let $M$ a connected compact Kähler manifold. A familly $(\omega_t)_{t \in I}$ of Kähler form on $M$ defined on  a interval $I$ of $\RR$ containing$0$ is called a \textit{solution of the Kähler-Ricci flow with initial condition $\omega_0$} if 
\begin{equation}\label{KRF}
\cfrac{\partial}{ \partial t} \, \omega_t = -Ric(\omega_t),~~~~ \omega_t \vert_{t =0} = \omega_0.
\end{equation}
If we assume that $M$ is a Fano manifold i.e. $c_1(M)>0$ then we renormalize the Kähler-Ricci flow by the change of variables $ \tilde{\omega}_t = e^t \, \omega_{1-e^{-t}}$ and we obtain \textit{the renormalized Kähler-Ricci flow} :
\begin{equation}\label{NKRF}
\cfrac{\partial}{ \partial t} \, \tilde{\omega}_t = -Ric(\tilde{\omega}_t) + \tilde{\omega}_t,~~~~ \tilde{\omega}_t \vert_{t =0} = \omega_0.
\end{equation}
This renormalization enables the flow to verify interesting properties 
\begin{lemma}
Let $M$ a connected Fano compact Kähler manifold. If $(\omega_t)_{ t \in [0,T[}$ a solution of the renormalized Kähler-Ricci flow \eqref{NKRF} with initial value $\omega_0 \in 2 \pi \, c_1(M)$ on a interval $[0,T[$ where $T>0$ then $\omega_t \in 2 \pi \, c_1(M)$ for all $t \in [0,T[$. Moreover, we have
\begin{itemize}
\item[$\bullet$]for all $t \in [0,T[$, \begin{equation}\label{Vol}
\operatorname{Vol}(M,\omega_t)= \operatorname{Vol}(M, \omega_0),
\end{equation}
\item[$\bullet$] for all $t \in [0,T[$, there exists a function $\varphi_t \in \C^{\infty}(M,\RR)$ such that
\begin{equation}\label{existpop}
\omega_t =\omega_0 + \sqrt{-1} \, \partial \overline{\partial}  \, \varphi_t.
\end{equation}
\end{itemize}
\end{lemma}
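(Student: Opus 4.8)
The plan is to verify the three assertions in order, since each feeds the next. First, to see that $\omega_t \in 2\pi\,c_1(M)$ for all $t\in[0,T[$, I would argue at the level of de Rham cohomology classes. Writing $[\,\cdot\,]$ for the cohomology class, the renormalized flow \eqref{NKRF} gives $\tfrac{d}{dt}[\omega_t] = -[\Ric(\omega_t)] + [\omega_t] = -2\pi\,c_1(M) + [\omega_t]$, using \eqref{Ricc_1}. Set $\beta_t := [\omega_t] - 2\pi\,c_1(M)$; then $\tfrac{d}{dt}\beta_t = \beta_t$, so $\beta_t = e^{t}\beta_0$. Since $\omega_0 \in 2\pi\,c_1(M)$ we have $\beta_0 = 0$, hence $\beta_t \equiv 0$, i.e. $[\omega_t] = 2\pi\,c_1(M)$ for all $t$.

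Second, for the volume identity \eqref{Vol}: because $\mathrm{Vol}(M,\omega_t) = \int_M \omega_t^n$ depends only on the cohomology class $[\omega_t]$, and we have just shown $[\omega_t] = 2\pi\,c_1(M) = [\omega_0]$ is constant in $t$, it follows immediately that $\int_M \omega_t^n = \int_M \omega_0^n$ for all $t\in[0,T[$. (Alternatively one can differentiate $\int_M \omega_t^n$ directly: $\tfrac{d}{dt}\int_M \omega_t^n = n\int_M (\tfrac{\partial}{\partial t}\omega_t)\wedge \omega_t^{n-1} = n\int_M(-\Ric(\omega_t)+\omega_t)\wedge\omega_t^{n-1}$, and the first term equals $-2\pi\,c_1(M)\cdot[\omega_t]^{n-1}$ while the second equals $[\omega_t]^n$; since $[\omega_t]=2\pi\,c_1(M)$ these cancel, giving derivative zero.)

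Third, for \eqref{existpop}: fix $t$. By the first part, $\omega_t - \omega_0$ is a real $(1,1)$-form that is $d$-exact (both represent $2\pi\,c_1(M)$), hence by the $\partial\overline\partial$-lemma on the compact Kähler manifold $M$ there exists $\varphi_t \in \C^\infty(M,\RR)$ with $\omega_t - \omega_0 = \sqrt{-1}\,\partial\overline\partial\,\varphi_t$, unique up to an additive constant. To upgrade this to a smooth family one normalizes, e.g. by requiring $\int_M \varphi_t\,\omega_0^n = 0$, and invokes smoothness of the solution $(\omega_t)$ in $t$ together with elliptic regularity / the parametrized $\partial\overline\partial$-lemma; in fact the most transparent route is to note that \eqref{NKRF} can be rewritten as a scalar parabolic Monge–Ampère equation for a potential $\varphi_t$ (choosing $h$ as in \eqref{Dada1}), whose solution exists and is smooth in $(x,t)$ on $[0,T[$, and then $\varphi_t$ manifestly satisfies \eqref{existpop}.

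The only genuinely delicate point is the smooth dependence on $t$ in the last item — the pointwise existence of $\varphi_t$ for each fixed $t$ is just the $\partial\overline\partial$-lemma, but producing a family that is jointly smooth requires either the parametrized version of that lemma or passing through the equivalent parabolic complex Monge–Ampère equation; everything else is a one-line cohomological computation. I would present the cohomology argument first, deduce \eqref{Vol} from it, and then handle \eqref{existpop} via the Monge–Ampère reformulation, which is in any case needed later for the convergence analysis.
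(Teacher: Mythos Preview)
Your proof is correct and follows essentially the same approach as the paper: the cohomology-class ODE to get $[\omega_t]=2\pi c_1(M)$, Stokes/cohomological invariance for the volume, and the $\partial\overline\partial$-lemma for the potential. Your discussion of smooth dependence of $\varphi_t$ on $t$ goes beyond what the lemma actually asserts (it only claims existence of $\varphi_t$ for each fixed $t$), and the paper accordingly does not address it.
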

\begin{proof}
By taking the cohomology class of Equation \eqref{NKRF}, we obtain the ordinary first order differential equation
\begin{align*}
\cfrac{\partial}{ \partial t}~ [ {\omega_t}]  = -[Ric(\omega_t)] + [\omega_t] = - 2 \pi \, c_1(M) + [\omega_t],
\end{align*}
with the intial value $[\omega(0)]= 2 \pi \, c_1(M)$ so solving this equation gives us $[\omega(t)]= 2 \pi \, c_1(M)$.
The first point is hence a consequence of the Stokes theorem and the last point is a consequence of the $ \partial \overline{\partial}$-lemma.
\end{proof}

Let $g_0$, $\omega_0$ and $\eta(M)$ be as in section \ref{setuphoro}. By the Hodge theory, for all $X \in \eta(M)$ there exists a unique function $\theta_X \in \C^\infty(M,\CC)$ satisfies 
$$
i_X \omega_0 = \sqrt{-1} \,\overline{\partial} \theta_X,
~~
\int_M e^{\theta_X} \omega^{n}_0 = \int_M \omega^{n}_0.
$$
and so, by the Cartan formula,
\begin{equation}\label{thetaXLX}
\LLL_X \omega_0 = \sqrt{-1} \partial \overline{\partial} \theta_X.
\end{equation}
\begin{lemma}[Proof of Proposition 2.1 in \cite{TZ2}]
Let $\varphi \in \C^\infty(M,\RR)$ such that $\omega_\varphi := \omega_0 + \sqrt{-1} \partial \overline{\partial} \varphi$ is a Kähler form. The function $\theta_X(\omega_\varphi)$ associated with the vector field $X \in \eta(M)$ for the metric $\omega_\varphi$ satisfies 
\begin{equation}
\theta_X(\omega_\varphi)= \theta_X + X(\varphi), 
\end{equation}
i.e. we get
$$
\LLL_X \omega_\varphi = \sqrt{-1} \partial \overline{\partial} \left( \theta_X + X(\varphi) \right), \int_M e^{\theta_X +X(\varphi)} \omega^{n}_\varphi = \int_M \omega^{n}_\varphi.
$$
\end{lemma}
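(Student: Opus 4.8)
The plan is to establish the formula in two moves: (i) contract the defining relation $i_X\omega_\varphi = \sqrt{-1}\,\overline{\partial}\,\theta_X(\omega_\varphi)$ with the holomorphic field $X$ to obtain $\theta_X(\omega_\varphi) = \theta_X + X(\varphi) + c$ for some additive constant $c \in \CC$, and (ii) use the two integral normalizations to pin down $c = 0$. The only non-formal ingredient is that $X$ is holomorphic: its (local) flow consists of biholomorphisms, so $\LLL_X$ preserves the bidegree decomposition and, in particular, commutes with $\overline{\partial}$.

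For (i), I would write $i_X\omega_\varphi = i_X\omega_0 + i_X(\sqrt{-1}\,\partial\overline{\partial}\varphi)$. The first term is $\sqrt{-1}\,\overline{\partial}\theta_X$ by the definition of $\theta_X = \theta_X(\omega_0)$. For the second, note that $\overline{\partial}\varphi$ is a $(0,1)$-form and $X$ is of type $(1,0)$, so $i_X(\overline{\partial}\varphi) = 0$; Cartan's formula applied to $\overline{\partial}\varphi$ therefore gives $i_X\big(d\,\overline{\partial}\varphi\big) = \LLL_X(\overline{\partial}\varphi)$, and since $d\,\overline{\partial}\varphi = \partial\overline{\partial}\varphi$ and $\LLL_X$ commutes with $\overline{\partial}$, this equals $\overline{\partial}(\LLL_X\varphi) = \overline{\partial}\big(X(\varphi)\big)$, using $\LLL_X\varphi = X(\varphi)$ on functions. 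Hence $i_X(\sqrt{-1}\,\partial\overline{\partial}\varphi) = \sqrt{-1}\,\overline{\partial}\big(X(\varphi)\big)$ and $i_X\omega_\varphi = \sqrt{-1}\,\overline{\partial}\big(\theta_X + X(\varphi)\big)$. Comparing with $i_X\omega_\varphi = \sqrt{-1}\,\overline{\partial}\theta_X(\omega_\varphi)$ and using that $M$ is connected yields $\theta_X(\omega_\varphi) = \theta_X + X(\varphi) + c$; applying $d$ also gives $\LLL_X\omega_\varphi = \sqrt{-1}\,\partial\overline{\partial}\big(\theta_X + X(\varphi)\big)$.

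For (ii) I would interpolate. For $s \in [0,1]$ set $\omega_s := \omega_0 + s\sqrt{-1}\,\partial\overline{\partial}\varphi = (1-s)\,\omega_0 + s\,\omega_\varphi$, which is a Kähler form in $2\pi\,c_1(M)$, so $\int_M\omega_s^n = \int_M\omega_0^n$ is independent of $s$. Writing $\psi_s := \theta_X + sX(\varphi)$, step (i) applied to $s\varphi$ gives $i_X\omega_s = \sqrt{-1}\,\overline{\partial}\psi_s$, whence $\LLL_X\omega_s = \sqrt{-1}\,\partial\overline{\partial}\psi_s$. Consider $F(s) := \int_M e^{\psi_s}\,\omega_s^n$. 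Differentiating under the integral and integrating the term coming from $\frac{d}{ds}\omega_s^n = n\sqrt{-1}\,\partial\overline{\partial}\varphi\wedge\omega_s^{n-1}$ by parts twice (using that $\omega_s$ is closed, so that one lands on $n\sqrt{-1}\,\partial\overline{\partial}(e^{\psi_s})\wedge\omega_s^{n-1}$), and then invoking the two pointwise identities $n\sqrt{-1}\,\partial\overline{\partial}\psi_s\wedge\omega_s^{n-1} = \LLL_X(\omega_s^n)$ and $n\sqrt{-1}\,\partial\psi_s\wedge\overline{\partial}\psi_s\wedge\omega_s^{n-1} = \big(X\psi_s\big)\,\omega_s^n$ (both consequences of $i_X\omega_s = \sqrt{-1}\,\overline{\partial}\psi_s$), together with $\int_M\LLL_X(\text{top form}) = \int_M d\big(i_X(\,\cdot\,)\big) = 0$, one finds that the contributions cancel and $F'(s) \equiv 0$. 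Since $F(0) = \int_M e^{\theta_X}\omega_0^n = \int_M\omega_0^n = \int_M\omega_s^n$, we get $F(1) = \int_M e^{\theta_X + X(\varphi)}\omega_\varphi^n = \int_M\omega_\varphi^n$, which is exactly the normalization characterising $\theta_X(\omega_\varphi)$; hence $c = 0$ and both displayed formulas follow.

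The computation in (ii) is where the actual work lies: one must bookkeep the two integrations by parts carefully and check that the "extra" gradient terms of the form $\varphi\,(X\psi_s)\,e^{\psi_s}\omega_s^n$ produced along the way cancel in pairs, leaving exactly $-\int_M X(\varphi)\,e^{\psi_s}\omega_s^n$ to cancel the first term of $F'(s)$. Everything else — and in particular step (i) — is formal $\partial\overline{\partial}$-calculus, resting only on the holomorphy of $X$.
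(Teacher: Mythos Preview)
Your argument is correct: step (i) is a clean application of Cartan's formula together with the fact that $\LLL_X$ commutes with $\overline{\partial}$ for holomorphic $X$, and your interpolation in step (ii) goes through exactly as you describe --- after the double integration by parts the terms recombine into $\int_M \varphi\,\LLL_X(e^{\psi_s}\omega_s^n)$ and a further use of $\int_M \LLL_X(\text{top form})=0$ leaves precisely $-\int_M X(\varphi)\,e^{\psi_s}\omega_s^n$, cancelling the first summand of $F'(s)$.

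There is nothing to compare against here: the paper does not supply its own proof of this lemma but simply records it with the attribution ``Proof of Proposition~2.1 in \cite{TZ2}''. Your write-up is therefore a self-contained justification of a fact the paper takes as known from Tian--Zhu; if anything, you could shorten step (ii) slightly by noting that the computation you carry out is essentially the standard verification that the modified Monge--Amp\`ere measure $e^{\theta_X(\omega_\varphi)}\omega_\varphi^n$ has total mass independent of $\varphi$, which is exactly what fixes the additive constant.
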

For the rest of the paper, in order to simplify the equations, we write $\theta_X$ for $\theta_X(\omega_0)$ and we will precise the metric if the situation requires it. Moreover, we have the well-known following result (for example \cite{IKRF} for a proof).
\begin{proposition}\label{exittinfity}
Let $(M,\omega_0)$ be a connected Fano compact Kähler manifold. A familly $(\omega_t)_{t \in [0,T[}$ is a solution of the renormalized Kähler-Ricci flow \eqref{NKRF} if and only if there exists a familly $(\varphi_t)_{t \in [0,T[}$ of smooth functions on $M$ satisfies
\begin{equation}\label{NKRFpop}
\cfrac{\partial \varphi_t}{\partial t} = \log \det((g_0)_{i \overline{j}} + (\varphi_t)_{i \overline{j}}) - \log \det((g_0)_{i \overline{j}}) + \varphi_t - h, ~~\varphi_0 =0, \omega_0 + \sqrt{-1} \, \partial \overline{\partial} \, \varphi_t>0.
\end{equation}
such that
$$
\omega_t = \omega_0 + \sqrt{-1} \, \partial \overline{\partial} \, \varphi_t.
$$
Moreover, the solution of \eqref{NKRF} (and equivalently for \eqref{NKRFpop}) exists for $t \in [0,+ \infty[$ and are unique.
\end{proposition}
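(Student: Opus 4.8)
The plan is to reduce the flow \eqref{NKRF} to the scalar parabolic complex Monge--Amp\`ere equation \eqref{NKRFpop} at the level of K\"ahler potentials, and then to invoke the long-time existence theory for that equation. For the reduction, given a solution $(\omega_t)_{t\in[0,T[}$ of \eqref{NKRF}, equation \eqref{existpop} supplies a smooth family $(\tilde\varphi_t)$, determined up to a time-dependent additive constant, with $\omega_t=\omega_0+\sqrt{-1}\,\partial\overline{\partial}\,\tilde\varphi_t$ and $\tilde\varphi_0=0$. Using \eqref{forTR} one has, in a local trivialisation, $-\Ric(\omega_t)=\sqrt{-1}\,\partial\overline{\partial}\log\det\big((g_0)_{i\overline{j}}+(\tilde\varphi_t)_{i\overline{j}}\big)$, while \eqref{Dada1} gives $\omega_0=-\sqrt{-1}\,\partial\overline{\partial}\log\det\big((g_0)_{i\overline{j}}\big)-\sqrt{-1}\,\partial\overline{\partial}h$ locally; substituting into \eqref{NKRF} and comparing $\sqrt{-1}\,\partial\overline{\partial}$ of the two sides, compactness of $M$ forces
$$
\frac{\partial\tilde\varphi_t}{\partial t}=\log\det\big((g_0)_{i\overline{j}}+(\tilde\varphi_t)_{i\overline{j}}\big)-\log\det\big((g_0)_{i\overline{j}}\big)+\tilde\varphi_t-h+c(t)
$$
for some function $c(t)$ of $t$ alone. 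Setting $\varphi_t:=\tilde\varphi_t+a(t)$, where $a$ solves the linear ODE $a'(t)=a(t)-c(t)$ with $a(0)=0$, one checks that $(\varphi_t)$ satisfies \eqref{NKRFpop} and still $\omega_t=\omega_0+\sqrt{-1}\,\partial\overline{\partial}\,\varphi_t$. This adjustment of the additive constant is the only delicate point of the reduction, and it is what reconciles $\varphi_0=0$ with the normalisation \eqref{renormal} of $h$; running the same computation backwards gives the converse implication.

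For existence and uniqueness, note that as long as $\omega_0+\sqrt{-1}\,\partial\overline{\partial}\,\varphi_t>0$ — an open condition that holds at $t=0$ — equation \eqref{NKRFpop} is strictly parabolic and quasilinear, so standard parabolic theory yields a unique solution on a maximal interval $[0,T[$; uniqueness on all of $[0,+\infty[$ will then follow from the parabolic maximum principle applied to the difference of two solutions. To show $T=+\infty$, I would argue by contradiction: if $T<\infty$, the fact that $[\omega_t]=2\pi\,c_1(M)$ is fixed (hence $\int_M\omega_t^n$ is constant, by \eqref{Vol}) together with Cao's a priori estimates — a $C^0$ bound on $\varphi_t$ and on $\dot\varphi_t$, then a second-order estimate, then higher-order parabolic Schauder/Calabi-type estimates, all uniform on $[0,T[$ — shows that $\omega_t$ remains uniformly positive with all covariant derivatives bounded up to time $T$, so the short-time existence result continues the solution past $T$, contradicting maximality.

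I expect the genuinely hard part to be this last step, Cao's a priori estimates guaranteeing that the renormalized flow does not degenerate in finite time; however, these are classical in the Fano setting, so in practice the proof amounts to the potential reduction above followed by a citation of \cite{IKRF} for the long-time existence and uniqueness.
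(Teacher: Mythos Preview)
The paper does not actually prove this proposition: it states it as a well-known result and refers to \cite{IKRF} for a proof. Your sketch is correct and in fact more detailed than what the paper offers; the reduction to the scalar Monge--Amp\`ere equation via the $\partial\overline\partial$-lemma, the ODE absorption of the time-dependent constant $c(t)$, and your concluding remark --- that the long-time existence ultimately rests on Cao's a priori estimates, to be cited from \cite{IKRF} --- match exactly how the paper handles it. One small terminological slip: equation \eqref{NKRFpop} is fully nonlinear (the $\log\det$ of the complex Hessian), not quasilinear; this does not affect the argument, since short-time existence for fully nonlinear strictly parabolic equations on compact manifolds is equally standard.
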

We observe that $\frac{\partial \varphi_t}{\partial t} \vert_{t=0} = h$. Let $h_t : M \rightarrow \CC$ be defined by Equation \eqref{Dada1} with $\omega_0$ replaced by $\omega_t$.
Now, we recall a deep estimate due to Perelman. We can consult \cite{TZx,SH} for a proof.
\begin{lemma}\label{Perelman}
Let $(\varphi_t)_{t \in [0,+\infty[}$ a familly of solutions of the equation \eqref{NKRFpop}. By assuming the constant $c_t$ such that $h_t = - \frac{\partial \varphi_t}{\partial t} +c_t$ satisfies
$$
\int_M e^{h_t} \omega_{\varphi_t}^n = \int_M \omega^n_0,
$$
there exists a constant $A$ independent of the time $t$
such that
$$
\vert h_t \vert \leq A.
$$
\end{lemma}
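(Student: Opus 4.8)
This is Perelman's estimate for the normalized Kähler--Ricci flow on a Fano manifold, so the plan is to reproduce the by-now-classical argument of Tian--Zhu \cite{TZx} and Sesum--Tian \cite{SH}; the horospherical structure plays no role here. First I would record the evolution equations. Writing the flow as $\frac{\partial}{\partial t}\omega_{\varphi_t}=-\Ric(\omega_{\varphi_t})+\omega_{\varphi_t}=-\sqrt{-1}\,\partial\overline{\partial}h_t$, one recovers $\frac{\partial\varphi_t}{\partial t}=-h_t+c_t$ as in the statement, and differentiating \eqref{NKRFpop} once more in $t$ gives a heat-type equation $\frac{\partial h_t}{\partial t}=\Delta_t h_t+h_t-\tilde c_t$, where $\Delta_t$ is the $\omega_{\varphi_t}$-Laplacian and $\tilde c_t$ is the time-dependent constant imposed by the normalization $\int_M e^{h_t}\omega_{\varphi_t}^n=\int_M\omega_0^n$. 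Tracing the identity $\Ric(\omega_{\varphi_t})-\omega_{\varphi_t}=\sqrt{-1}\,\partial\overline{\partial}h_t$ against $\omega_{\varphi_t}$ also gives $\Delta_t h_t=s_t-n$, where $s_t$ is the scalar curvature of $\omega_{\varphi_t}$ (normalized so that $\int_M s_t\,\omega_{\varphi_t}^n=n\operatorname{Vol}(M,\omega_0)$, a topological constant).

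The substantial input is Perelman's $\mathcal{W}$-entropy. Along the normalized flow the functional $\mu(\omega_{\varphi_t})$ obtained by minimizing $\mathcal{W}(\omega_{\varphi_t},\,\cdot\,,\tau)$ over the test function $f$, with $\tau$ fixed, is monotone non-decreasing in $t$; and it is bounded above by a fixed constant (evaluate $\mathcal{W}$ at the constant function and use that $\int_M s_t\,\omega_{\varphi_t}^n$ is independent of $t$), so $\mu(\omega_{\varphi_t})$ converges as $t\to+\infty$. From this I would import two facts from \cite{SH}, treating them as black boxes: first, Perelman's no-local-collapsing with a constant uniform in $t$, equivalently a uniform logarithmic Sobolev inequality, hence a uniform Sobolev inequality, for all the metrics $\omega_{\varphi_t}$; second, a uniform lower bound $s_t\geq -C$ for the scalar curvature and a uniform upper bound $\operatorname{diam}(M,\omega_{\varphi_t})\leq C$ for the diameter.

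Granting these, $\vert h_t\vert\leq A$ follows. For the upper bound: from $\Delta_t h_t=s_t-n\geq -C$ the function $h_t$ is a subsolution of $\Delta_t(\,\cdot\,)=-C$, while the normalization forces $\Vert (h_t)_+\Vert_{L^1(\omega_{\varphi_t}^n)}\leq \operatorname{Vol}(M,\omega_0)$ (for $s\geq 0$ one has $e^{h_t}>e^s$ on $\{h_t>s\}$, so that set has $\omega_{\varphi_t}^n$-volume at most $e^{-s}\operatorname{Vol}(M,\omega_0)$, and one integrates in $s$); the uniform Sobolev inequality of the previous step then gives, by a Moser iteration, $\sup_M h_t\leq C$. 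For the lower bound one uses, following Perelman, the minimizer $f_t$ of the $\mathcal{W}$-entropy: non-collapsing makes $f_t$ uniformly bounded in $C^0$ and controlled in $C^1$, and the elliptic equation relating $f_t$, $h_t$ and $s_t$ then yields $\inf_M h_t\geq -C$ (this is the content of the corresponding estimates in \cite{TZx,SH}). Taking $A$ to be the larger of the two constants concludes.

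The main obstacle is the second step: the uniform diameter bound and the scalar-curvature lower bound are the genuinely deep part of Perelman's theorem, proved by coupling the entropy monotonicity with the no-local-collapsing and an iteration over a sequence of nested annuli. The evolution equations and the upper-bound argument are comparatively routine; since only $\vert h_t\vert\leq A$ is needed here, I would carry those out in full and import the geometric estimates and the lower bound from \cite{TZx,SH}.
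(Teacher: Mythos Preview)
The paper does not prove this lemma: it is introduced with ``we recall a deep estimate due to Perelman. We can consult \cite{TZx,SH} for a proof'' and then simply stated. So there is nothing in the paper to compare against beyond the citations.

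Your proposal goes further than the paper does, sketching the Perelman--Sesum--Tian argument (entropy monotonicity $\Rightarrow$ uniform Sobolev and no-local-collapsing $\Rightarrow$ diameter bound and scalar-curvature lower bound $\Rightarrow$ Moser iteration for the upper bound and the minimizer argument for the lower bound). This is indeed the content of the cited references \cite{TZx,SH}, and your outline is faithful to that strategy. You are also right that the deep step is the diameter/scalar-curvature package, which you explicitly import rather than reprove. Since the paper itself treats the whole lemma as a black box from \cite{TZx,SH}, your proposal is not just consistent with the paper's approach but strictly more detailed; for the purposes of this paper, a one-line citation in the style of the surrounding text would have sufficed.
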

Now, we assume that $M$ is as in Section \ref{setuphoro} and that $\omega_0$ is a $K$-invariant Kähler form. If we denote by $m_0$ a $K$-invariant Hermitian form on $K_M^{-1}$ such that $\omega_0=\omega_{m^0}$, then there exists a convex potentials $u_0 \in \C^\infty(\aaa_1,\RR)$ (see Equation \eqref{potudef}) i.e we have
$$
\omega_0 \vert_{P/H} = \sqrt{-1} \, \partial \overline{\partial_t} u_0.
$$
Moreover, because the Kähler-Ricci flow perserves the $K$-invariance, we obtain that $\omega_t$ admit also a convex potentials $u_t  \in \C^\infty(\aaa_1,\RR)$ which satisfies
$$
\forall x \in \aaa_1,~~ u_t(x)=u_0(x) + \varphi \vert_{P/H} (\exp(x)H).
$$
Hence, by $K$-invariance and using Theorem $\ref{delomega}$ (in particular, Equation \eqref{omegansr}) and the normalisation \eqref{renormal}, we can reduce Equation \eqref{NKRF} to the following real Monge-Ampère equation for the familly of convex potential $(u=u_t)_{t \in [0,+\infty[}$ belonging to $\C^\infty(\aaa_1,\RR)$ : 
\begin{equation}\label{KRF-MAER}
\cfrac{\partial u}{ \partial t} = \log \det(u_{ij}) + u + \sum_{\alpha \in \Phi_P^+} \log \langle \alpha, \nabla u + 4 \tau_{\rho_P} \rangle, ~~~~ u \vert_{t=0}=u_0.
\end{equation}
Thanks to Proposition \ref{exittinfity}, there exists a family $(u_t)$ solution of \eqref{KRF-MAER} for all $t \in [0,+\infty[$.

\subsection{Study of the solutions $(u_t)_{t \in [0,+\infty[}$ of Equation \eqref{KRF-MAER}}

We recall a useful lemma.

\begin{lemma}[\cite{Mi}]\label{ellip}
Let  $\Omega$ be a convex bounded domain of $\RR^n$. Then there is a unique ellipsoid $E$, called the minimal ellipsoid of $\Omega$, whose volume is minimal among ellipsoids containing $\Omega$. In addition, $E$ satisfies
$$
\cfrac{1}{n} E \subset \Omega \subset E.
$$
Let $T$ be an affine transformation preserving the center $x_0$ of $E$ i.e. $T(x)=T_{vect}(x-x_0)+x_0$ for a matrix $T_{vect} \in \operatorname{SL_n}(\RR)$ and such that $T(E)$ is a ball $B_R$ with center $x_0$ for a certain $R>0$ (depending on $E$). In particular, we have $B_{R/n} \subset T(\Omega) \subset B_R$.
\end{lemma}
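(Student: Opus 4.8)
The plan is to prove this by a compactness-plus-symmetrization argument, which is the classical route to John's theorem. First I would establish existence and uniqueness of the minimal ellipsoid. For existence: parametrize ellipsoids containing $\Omega$ by their center $c \in \RR^n$ and a positive-definite symmetric matrix $Q$, so that the ellipsoid is $\{x : (x-c)^T Q^{-1} (x-c) \le 1\}$ and its volume is a constant times $\sqrt{\det Q}$. Since $\Omega$ is bounded, the set of such pairs $(c,Q)$ with the containment constraint is nonempty; one checks that minimizing sequences stay in a compact region (the $Q$ cannot degenerate to a non-invertible matrix without volume blowing up or containment failing, and $c$ stays bounded because the ellipsoid must contain the bounded set $\Omega$), so a minimizer exists by continuity of $\det$. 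For uniqueness: if $E_1 = \{(x-c_1)^TQ_1^{-1}(x-c_1)\le 1\}$ and $E_2$ are both minimal, consider the "average" ellipsoid built from $c = \frac{1}{2}(c_1+c_2)$ and $Q = \frac{1}{2}(Q_1+Q_2)$; strict concavity of $Q \mapsto (\det Q)^{1/n}$ on positive-definite matrices (AM–GM on eigenvalues after simultaneous diagonalization) shows this averaged ellipsoid still contains $\Omega$ (it contains $\frac{1}{2}(E_1+E_2) \supseteq \Omega$ by convexity of $\Omega$ — actually one uses $E_1 \cap E_2 \subseteq$ the averaged ellipsoid, so it contains $\Omega$) and has strictly smaller volume unless $Q_1 = Q_2$ and $c_1 = c_2$, a contradiction.

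Next I would prove the inclusion $\frac{1}{n}E \subset \Omega \subset E$. The right inclusion is by definition. For the left one, after an affine change of coordinates we may assume $E$ is the unit ball $B_1$ centered at the origin; it suffices to show $\Omega \supseteq B_{1/n}$. Suppose not: then some point $p$ with $|p| < 1/n$ lies outside $\Omega$, so by the Hahn–Banach separation theorem there is a half-space $\{x : \langle x, \nu\rangle \le t\}$ (with $|\nu|=1$) containing $\Omega$ but not $p$, hence $t < 1/n$ — actually I only need that $\Omega$ is contained in a slab of width less than $2/n$ in some direction, say $\Omega \subseteq B_1 \cap \{|\langle x,\nu\rangle| \le s\}$ with $s < 1/n$ (combining the separating hyperplane with the ball $B_1 \supseteq \Omega$). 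Then I construct an ellipsoid of the form $E' = \{x : a\langle x,\nu\rangle^2 + b(|x|^2 - \langle x,\nu\rangle^2) \le 1\}$ with $a > 1 > b > 0$ chosen so that $E' \supseteq \Omega$ (stretch in the directions orthogonal to $\nu$, compress along $\nu$) and $\det$-volume $\propto a^{-1/2}b^{-(n-1)/2} < 1$. Optimizing the choice of $a,b$ subject to the containment gives the bound; this is the standard computation showing that if $\Omega$ is squeezed into a thin slab one can do strictly better than the unit ball, contradicting minimality. The constant $1/n$ drops out of this optimization.

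Finally the last sentence of the lemma is immediate: given the affine map $T(x) = T_{\mathrm{vect}}(x - x_0) + x_0$ with $T_{\mathrm{vect}} \in \operatorname{SL}_n(\RR)$ sending $E$ to the ball $B_R(x_0)$, apply $T$ to the chain $\frac{1}{n}E \subset \Omega \subset E$. Since $T$ is affine and $T(E) = B_R(x_0)$, we get $T(\frac{1}{n}E) = B_{R/n}(x_0)$ (dilation by $\frac1n$ about $x_0$ commutes with $T$ because $T$ fixes $x_0$ and is linear in $x - x_0$), hence $B_{R/n}(x_0) \subseteq T(\Omega) \subseteq B_R(x_0)$, which is the claim.

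The main obstacle is the left inclusion $\frac1n E \subset \Omega$: the existence/uniqueness part is a routine compactness-and-convexity argument, but extracting the sharp constant $1/n$ requires the careful construction of the competitor ellipsoid $E'$ adapted to a thin slab containing $\Omega$ and the ensuing optimization in the two parameters $a,b$. Since this is a standard result (John 1948), in the write-up I would either carry out this optimization explicitly or simply cite \cite{Mi} as the source, as the statement already does.
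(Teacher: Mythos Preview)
The paper does not prove this lemma at all: it is stated as a ``useful lemma'' that the author \emph{recalls}, with a citation to \cite{Mi}, and no argument is given. So there is no proof in the paper to compare against; the paper simply treats John's theorem as a black box and moves on to use it in the proof of Lemma~\ref{mt<C}.

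Your outline is a faithful sketch of the classical route to John's theorem (existence via compactness on the space of containing ellipsoids, uniqueness via strict concavity of $Q\mapsto(\det Q)^{1/n}$, the sharp $1/n$ constant via a competitor ellipsoid adapted to a cap), and the last sentence of the lemma is indeed immediate once $\tfrac{1}{n}E\subset\Omega\subset E$ is known. One small imprecision: from a single separating hyperplane for a point $p\notin\Omega$ you do not get a \emph{slab} containing $\Omega$; you get only a half-space, so $\Omega$ lies in a spherical cap $B_1\cap\{\langle x,\nu\rangle\le t\}$. The competitor ellipsoid is then built to contain that cap (not a slab), typically by both compressing along $\nu$ \emph{and} translating the center, and the optimization in those parameters is where the constant $1/n$ appears. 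This is exactly the ``main obstacle'' you flag, and your final remark --- that one may as well cite \cite{Mi} --- matches what the paper does.
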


We set $\overline{u}_t = u_t - c_t$ where $c_t$ is the constant defined in Lemma \ref{Perelman}.

\begin{lemma}\label{wtext}
The function $\overline{u}_t$ attains its minimum $m_t$ at a point $x_t \in \aaa_1$ .
\end{lemma}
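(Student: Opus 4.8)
The plan is to show that $\overline{u}_t$ is a smooth strictly convex function on $\aaa_1$ that is proper (i.e. $\overline{u}_t(x) \to +\infty$ as $\Vert x \Vert \to +\infty$); a continuous proper function on $\aaa_1 \simeq \RR^r$ attains its minimum, and strict convexity guarantees the minimizer is unique, so we may speak of \emph{the} minimum point $x_t$. Smoothness and strict convexity are already available: by Proposition \ref{exittinfity} the solution $u_t$ is smooth, and since $\omega_t = \omega_0 + \sqrt{-1}\,\partial\overline\partial\varphi_t$ is a Kähler form (positive $(1,1)$-form), Theorem \ref{delomega} forces the Hessian $(u_t)_{ij}$ to be positive definite; subtracting the constant $c_t$ does not change any of this. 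So the whole content of the lemma is the properness of $\overline{u}_t$, equivalently of $u_t$.

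For properness I would use Proposition \ref{potconv}: for the reference metric $q$ with $\omega_{m^0} = \omega_0$, the potential $u_0$ satisfies that $u_0 - v_{2\Delta}$ is bounded on $\aaa_1$, and moreover $\mathrm{im}(du_0) = 2\Delta$ with $0 \in \operatorname{Int}(\Delta)$ by \eqref{0inD}. Since $2\Delta$ has $0$ in its interior, the support function $v_{2\Delta}$ satisfies $v_{2\Delta}(x) \geq \delta \Vert x \Vert$ for some $\delta > 0$ (because $2\Delta$ contains a ball of radius $\delta$ around the origin, so $v_{2\Delta}(x) = \sup_{p \in 2\Delta}(x,p) \geq \delta\Vert x\Vert$). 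Hence $u_0$ is proper. It remains to pass from $u_0$ to $u_t$: we have $u_t = u_0 + \varphi_t|_{P/H}\circ \exp$, and $\varphi_t$ is a smooth function on the \emph{compact} manifold $M$, hence bounded uniformly in space (for each fixed $t$), so $u_t - u_0$ is bounded on $\aaa_1$. Therefore $u_t$, and with it $\overline{u}_t = u_t - c_t$, is proper and bounded below, so it attains its minimum at some $x_t \in \aaa_1$, and strict convexity makes $x_t$ unique.

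The main obstacle, such as it is, is simply to be careful about what is uniform in $t$ and what is not: the statement only asserts existence of a minimizer for each fixed $t$, so boundedness of $\varphi_t$ as a function on the compact $M$ (no uniformity in $t$ needed) suffices. One should also record the alternative, more self-contained argument via the equation itself: differentiating \eqref{KRF-MAER} or arguing directly, one sees that $\overline{u}_t$ inherits the asymptotic behaviour $u_t - v_{2\Delta}$ bounded from $u_0$ because the family of moment polytopes $2\Delta = \mathrm{im}(du_t)$ is constant along the flow (the cohomology class is preserved by the lemma above), which is the cleanest way to phrase it; but invoking Proposition \ref{potconv} for $u_t$ directly — legitimate since $\omega_t$ is a $K$-invariant Kähler metric with positive curvature on $K_M^{-1}$ — already gives both $u_t - v_{2\Delta}$ bounded and the coercivity estimate $v_{2\Delta}(x) \geq \delta\Vert x\Vert$, and the lemma follows at once.
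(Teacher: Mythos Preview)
Your proof is correct. The paper's own argument is a little more direct: instead of establishing properness, it simply notes that by Proposition~\ref{potconv} the gradient map $\nabla u_t$ has image exactly $2\Delta$, and since $0\in\operatorname{Int}(\Delta)$ by \eqref{0inD} there is some $x_t\in\aaa_1$ with $\nabla u_t(x_t)=0$; convexity then makes $x_t$ a global minimizer. Both arguments rest on the same two ingredients (Proposition~\ref{potconv} and \eqref{0inD}) and differ only in organization: the paper's critical-point version is shorter, while your properness route additionally records the linear growth $\overline{u}_t(x)\geq \delta\Vert x\Vert - C$, which can be convenient in later estimates.
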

\begin{proof}
Recall that convex function on $ \aaa_1 $ that admits a critical point admits a global minimum. Moreover, we can work with the function $u_t$ because it differs with $\overline{u}_t$ by a constant. Note that $ u_t $ is a convex function thanks to Proposition \ref{potconv}. In order to conclude, it is therefore sufficient to prove that $ 0 \in \nabla u_t (\aaa_1) $ and it this follow from Equation \eqref{0inD} and Proposition \eqref{delomega}.
\end{proof}

\begin{lemma}\label{mt<C}
We have
$$
\exists C>0, \forall t \in [0,+ \infty], m_t \leq C.
$$
\end{lemma}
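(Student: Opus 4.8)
The plan is to rewrite the normalisation contained in Lemma~\ref{Perelman} as an $L^1$ identity for $\overline{u}_t$ on $\aaa_1$, to observe that $m_t\le C$ is then equivalent to a uniform bound on the volume of the bottom sublevel set of $\overline{u}_t$, and finally to produce that bound from Perelman's deep estimate.

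First I would record a consequence of \eqref{KRF-MAER}. Since $h_t=-\partial_t\varphi_t+c_t=-\partial_t u_t+c_t$ on $P/H$ and the right-hand side of \eqref{KRF-MAER} other than the term $u$ depends only on the derivatives of $u$, replacing $u_t$ by $\overline{u}_t=u_t-c_t$ yields the pointwise identity
\begin{equation*}
h_t=-\log\det((\overline{u}_t)_{ij})-\sum_{\alpha\in\Phi_P^+}\log\langle\alpha,\nabla\overline{u}_t+4\tau_{\rho_P}\rangle-\overline{u}_t\qquad\text{on }\aaa_1 .
\end{equation*}
Feeding this into $\int_M e^{h_t}\omega_{\varphi_t}^n=\int_M\omega_0^n$, writing $\omega_t^n$ via \eqref{omegansr} and reducing the integral over $M$ to an integral over $\aaa_1$ by the polar decomposition of Proposition~\ref{polar}, the factor $\MA_\RR(\overline{u}_t)$ and the root weights cancel and one obtains
\begin{equation*}
\int_{\aaa_1}e^{-\overline{u}_t(x)}\,dx=V',
\end{equation*}
where $V'>0$ is a constant independent of $t$ (a fixed multiple of $\operatorname{Vol}(M,\omega_0)$).

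Next I would reduce to convex geometry. Recall $\overline{u}_t$ is convex and proper (its gradient takes values in the bounded polytope $2\Delta$, which contains $0$ in its interior by Proposition~\ref{potconv} and \eqref{0inD}), with minimum $m_t$ attained at $x_t$. For $s\ge 1$, convexity gives $\{\overline{u}_t\le m_t+s\}\subseteq x_t+s\,(\{\overline{u}_t\le m_t+1\}-x_t)$, hence $\operatorname{Vol}(\{\overline{u}_t\le m_t+s\})\le s^{r}\operatorname{Vol}(\{\overline{u}_t\le m_t+1\})$, where $r$ is the dimension of $\aaa_1$. Writing $V'=\int_{\aaa_1}e^{-\overline{u}_t}$ as a layer-cake integral of sublevel volumes and using $\overline{u}_t\ge m_t$ gives
\begin{equation*}
e^{-m_t-1}\operatorname{Vol}(\{\overline{u}_t\le m_t+1\})\ \le\ V'\ \le\ (1+\Gamma(r+1))\,e^{-m_t}\operatorname{Vol}(\{\overline{u}_t\le m_t+1\}).
\end{equation*}
Thus $m_t\le C$ is equivalent to a uniform bound $\operatorname{Vol}(\{\overline{u}_t\le m_t+1\})\le C'$; equivalently, evaluating the identity of the previous step at $x_t$ (where $\nabla\overline{u}_t(x_t)=0$) and using $|h_t|\le A$, it is equivalent to $\det((\overline{u}_t)_{ij})(x_t)\ge\varepsilon_0>0$, i.e.\ to the fact that $\overline{u}_t$ does not become arbitrarily flat at its minimum.

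The hard part is this last bound. From the identity above and $|h_t|\le A$ one has $\det((\overline{u}_t)_{ij})(x)=\frac{e^{-h_t(x)-\overline{u}_t(x)}}{\prod_{\alpha\in\Phi_P^+}\langle\alpha,\nabla\overline{u}_t(x)+4\tau_{\rho_P}\rangle}$, and since the root weights are bounded on $2\Delta$ by compactness (cf.\ \eqref{f>0}) this gives $\det((\overline{u}_t)_{ij})(x)\ge\kappa_0\,e^{-\overline{u}_t(x)}$ on $\aaa_1$; integrating over $\{\overline{u}_t\le m_t+1\}$, changing variables $p=\nabla\overline{u}_t(x)$, and using the lower bound of the previous step already yields a uniform estimate $\operatorname{Vol}(\nabla\overline{u}_t(\{\overline{u}_t\le m_t+1\}))\ge\varepsilon_1>0$ — the gradient map already spreads a definite amount of mass over $2\Delta$. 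To convert this into the required upper bound on $\operatorname{Vol}(\{\overline{u}_t\le m_t+1\})$ I would normalise $\{\overline{u}_t\le m_t+1\}$ by its minimal ellipsoid (Lemma~\ref{ellip}) and bring in the full strength of Perelman's estimate — the diameter and non-collapsing bounds for $(M,\omega_t)$ that accompany the bound $|h_t|\le A$ — to exclude a degeneration of $\omega_t$ over a region of $\aaa_1$ of volume $\asymp e^{m_t}$, which is precisely what $m_t\to+\infty$ would force through the previous step. I expect this comparison between the Riemannian geometry of $(M,\omega_t)$ and the convex-analytic data of $\overline{u}_t$ to be the main obstacle; it is the horospherical counterpart of the corresponding step in the toric case \cite{ZZZ}, and it is here that the finiteness of $\dim\aaa_1$ and the boundedness of $2\Delta$ are used decisively.
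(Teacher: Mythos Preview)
Your setup is correct and in fact cleaner than the paper's: the identity $\int_{\aaa_1}e^{-\overline u_t}\,dx=V'$ (rather than just two-sided bounds) follows exactly as you say from the normalisation of $h_t$, and your layer-cake reduction to an upper bound on $\operatorname{Vol}(\{\overline u_t\le m_t+1\})$ is the right reformulation. You also correctly extract the key differential inequality $\det(D^2\overline u_t)\ge \kappa_0\,e^{-\overline u_t}$ from \eqref{KRF-MAER} and $|h_t|\le A$.

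The gap is in the last step. You propose to close the argument by invoking Perelman's diameter and non-collapsing bounds to ``exclude a degeneration of $\omega_t$''. That is not the tool, and it is not clear how it would yield the desired volume bound on the sublevel set. The paper's argument at this point is purely convex-analytic and uses only the scalar bound $|h_t|\le A$ already in hand: after John normalisation of $A_0=\{\overline u_t\le m_t+1\}$ by a volume-preserving affine map $T$ so that $B_{R/r}\subset T(A_0)\subset B_R$, one has $\det(D^2(\overline u_t\circ T^{-1}))\ge C_0\,e^{-m_t}$ on $T(A_0)$ while $\overline u_t\circ T^{-1}$ oscillates by at most $1$. Comparing with the explicit paraboloid
\[
v(y)=\tfrac12\,C_0^{1/r}e^{-m_t/r}\bigl(|y-y_t|^2-(R/r)^2\bigr)+m_t+1,
\]
which has $\det D^2 v=C_0\,e^{-m_t}$ and dominates $\overline u_t\circ T^{-1}$ on $\partial T(A_0)$, the comparison principle for the real Monge--Amp\`ere operator forces $v(y_t)\ge m_t$, i.e.\ $R\le \sqrt{2}\,r\,C_0^{-1/2r}e^{m_t/(2r)}$. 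Hence $|A_0|=|T(A_0)|\le \omega_r R^{r}\le C\,e^{m_t/2}$, and then your own convex dilation $A_k\subset (k{+}1)\cdot A_0$ and layer-cake sum give $V'=\int_{\aaa_1}e^{-\overline u_t}\le B\,e^{-m_t/2}$, which is the required upper bound on $m_t$.

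In short: keep everything up to and including the John normalisation, but replace the appeal to Riemannian non-collapsing by the comparison principle with a paraboloid. No further Perelman-type input is needed.
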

\begin{proof}
First we show the lower bound. By Lemma \ref{Perelman}, one have
$$
\vert \cfrac{\partial \varphi_t}{\partial t} -c_t \vert \leq A,
$$
so, because $\varphi_t \vert_{P/H}( \exp(x) H) = u_t(x)$, we get
\begin{equation}\label{Anni}
-A \leq \cfrac{\partial u_t}{\partial t} -c_t \leq A.
\end{equation}
Using Equation \ref{KRF-MAER}, we obtain
$$
-A \leq \log \det(u_{ij}) +u -c_t + \sum_{\alpha \in \Phi_P^+} \log \langle \alpha, \nabla u(x) + 4 \tau_{\rho_P} \rangle \leq A.
$$
and finally we get
$$
e^{-A } \det(u_{ij})  \prod_{\alpha \in \Phi^+_P} \langle \alpha , \nabla u(x) + 4 \tau_{\rho_P} \rangle \leq e^{- \overline{u}_t} \leq e^{A} \det(u_{ij}) \prod_{\alpha \in \Phi^+_P} \langle \alpha , \nabla u(x) + 4 \tau_{\rho_P} \rangle.
$$
We integrate the previous equation in order to get
$$
e^{-A} \int_{\aaa_1} \prod_{\alpha \in \Phi^+_P} \langle \alpha , \nabla u(x) + 4 \tau_{\rho_P} \rangle \det(u_{ij})(x)dx \leq \int_{\aaa_1} e^{-\overline{u}} dx \leq e^{A} \int_{\aaa_1} \prod_{\alpha \in \Phi^+_P} \langle \alpha , \nabla u(x) + 4 \tau_{\rho_P} \rangle \det(u_{ij})(x)dx, 
$$
and by using Proposition \ref{potconv}, we finally have
\begin{equation}\label{e^uborn}
e^{-A}  \, \int_{-2 \Delta^+} \prod_{\alpha \in \Phi^+_P} ( \alpha , p  ) dp \leq \int_{\aaa_1} e^{-\overline{u}_t} dx \leq e^{A} \, \int_{-2 \Delta^+} \prod_{\alpha \in \Phi^+_P} ( \alpha , p  ) dp. 
\end{equation}
Now we recall, thanks to Proposition \ref{potconv} that $\vert \nabla \overline{u}_t(x) \vert \leq d_0:= \sup_{x \in 2 \Delta} \Vert x \Vert $and so, by using the mean value theorem,
$$
-d_0 \Vert x - x_t \Vert \leq m_t - \overline{u}_t(x) \leq d_0 \Vert x - x_t \Vert .
$$
By Equation \eqref{e^uborn}, there exists a time independent constant $c_2$ such that
$$
\int_{\aaa_1} e^{-d_0 \Vert x - x_t \Vert } dx \leq  \int_{\aaa_1} e^{-\overline{u}_t + m_t} dx \leq c_2 e^{m_t}.
$$
By doing the change of variable $y=x-x_t$, the left term is independent of  $t$ and 
$$
\int_{\aaa_1} e^{-d_0 \vert y \vert } dy \leq c_2 e^{m_t}.
$$
Hence we can conclude that there exists a constant $C>0$ such that for every $t \in [0, + \infty[$,
$$
C \leq m_t.
$$
Now, we show the upper bound. The proof is inspired by the proof of Lemma 2.1 in \cite{WZ}. We set
$$
A_k := \lbrace x \in \aaa_1 ~/~ m_t + k \leq \overline{u}_t(x) \leq m_t + k +1 \rbrace.
$$
Since $\overline{u}$ is convex and $m_t < + \infty$, we have
\begin{itemize}
\item[$\bullet$] $A_k$ is bounded for all $k \geq 0$ and $\cup_{k \in \NN} A_k = \aaa_1$
\item[$\bullet$] $x_t \in A_0$.
\item[$\bullet$] $\cup_{i=0}^k A_i$ is convex for all $k \geq 0$.
\end{itemize}
Moreover, there exists a constant $c>0$ independent of the time such that
$$
\det( (\overline{u}_t)_{ij}) \geq   c  e^{- \overline{u}_t} \text{ in $\aaa_1$.}
$$
Indeed, thanks to Equation \eqref{KRF-MAER}, we have
\begin{align*}
\det( (\overline{u}_t)_{ij}) &=   \exp \left[ \cfrac{\partial u}{\partial t} - u \right] \prod_{\alpha \in \Phi^+_{P}} \langle \alpha, \nabla u(x) + 4 \tau_{\rho_P} \rangle^{-1} \\
&=  \exp \left[ \cfrac{\partial u}{\partial t} -c_t - \overline{u} \right]  \prod_{\alpha \in \Phi^+_P} \langle \alpha, \nabla u(x) + 4 \tau_{\rho_P} \rangle^{-1}. \\
& \geq e^{-A} \, f^{-1} \, e^{-\overline{u}_t}, \\ 
\end{align*}
by the Equation \eqref{Anni}, where
$$
f:= \sup_{ p \in -2 \Delta^+} \left\lbrace \prod_{\alpha \in \Phi^+_P} ( \alpha , p ) \right\rbrace>0.
$$
Hence, we get
\begin{equation}\label{Wagner}
\det( (\overline{u}_t)_{ij}) \geq  C_0 \, e^{- m_t} \text{ dans $A_0$.}
\end{equation}
By using Lemma \ref{ellip}, there exists a affine transformation $ y = T (x) $ such that the vectorial part has a determinant egal to $1$ and preserving the center of the minimal ellipsoid of $A_0 $ and satisfying
\begin{equation}\label{Wagner2}
B_{R/r} \subset T(A_0) \subset B_R,
\end{equation}
and 
\begin{equation}\label{cloclo}
\det ((\overline{u}_t)_{ij}) \geq C_0  e^{-m_t}  \text{ in $T(A_0)$.}
\end{equation}
Moreover, we clain 
\begin{equation}\label{Anni2}
R \leq \sqrt{2} \, r \, C_0^{-1/2r} e^{m_t/2r}.
\end{equation}
Indeed, let
$$
v : y \in \aaa_1   \longmapsto   \cfrac{1}{2} C_0^{-1/r} \, e^{m_t/r} \left[ \Vert y - y_t \Vert^2 - \left(\cfrac{R}{r} \right)^2 \right] + m_t +1 \in \RR
$$
where $y_t$ is the center of the minimal ellipsoid of $A_0$. A direct computation gives us
$$
\det(\overline{u}_{ij})= C_0 e^{-m_t} ~~ \text{ on $T(A_0)$,}
$$
and $v \geq \nu$ on $ \partial T(A_0)$ hence on $T(A_0)$ thanks to the comparison principle. In particular, we get
$$
m_t  \leq \overline{u}_t \leq v(y_t) = - \cfrac{1}{2} \, C_0^{1/r} \, e^{m_t/r} \left(\cfrac{R}{r} \right)^2 + m_t+1.
$$
Moreover, by the convexity of $\overline{u}_t$, we obtain 
$$
A_k \subset \bigcup_{i=0}^k A_k \subset (k+1) \cdot A_0,
$$
where $ (k+1) \cdot A_0$ is the dilation of $A_0$ of factor $(k+1)$. Moreover, thanks to the equation \eqref{Wagner2}, we get
$$
T(A_k) \subset  T( ((k+1) \cdot A_0)) \subset (k+1) \cdot T(A_0) = B_{(k+1)R}.
$$
Now, if we denote by $\omega_{r}$ the volume of the unit ball of $\aaa_1$ then we have by Equation \eqref{Anni2} 
\begin{align*}
\int_{\aaa_1} e^{-\overline{u}_t} & \leq \sum_k \int_{T(A_k)} e^{-\overline{u}_t}  \leq \sum_k e^{-m_t -k} \vert T(A_k) \vert 
\leq  \omega_{r} \sum_k e^{-m_t -k} \vert (k+1)R \vert^{r}  \\
&= \omega_{r}  \cfrac{(R)^{r}}{e^{m_t}} \sum_k \cfrac{(k+1)^{r}}{e^k} 
 \leq B e^{-m_t/2}.
\end{align*}
where $B>0$ is a constant independent of $t$. Finally we get, by using Equations \eqref{KRF-MAER} and \eqref{Anni} and Proposition \ref{potconv},
\begin{align*}
e^{-m_t/2} & \geq \cfrac{1}{B} \int_{\aaa_1} e^{-\overline{u}_t}  \geq \cfrac{1}{B}  \int_{\aaa_1} \det(u_{ij}) \exp \left[ -\cfrac{  \partial u}{\partial t}  + c_t \right] \prod_{\alpha \in \Phi^+_P} \langle \alpha , \nabla u_t + 4 \tau_{\rho_P} \rangle dx \\
& \geq \cfrac{e^{-A}}{B}  \int_{-2 \Delta^+}  \prod_{\alpha \in \Phi^+_P} ( \alpha , p ) dp =: \tilde{C}, \\
\end{align*}
where $\tilde{C}$ is a constant independent of $t$. Finally, we get
$$
m_t \leq C,
$$
where $C>0$ is a constant independent of $t$.
\end{proof}
Now, we set
\begin{equation}\label{ubb}
 \overline{\overline{u}}_t (\cdot):= \overline{u}_t( \cdot + x_t) -m_t,
\end{equation}
and
\begin{equation}\label{phibar}
\overline{\varphi}_t:= \overline{\overline{u}}-u_0.
\end{equation}
Remark that $\overline{\varphi}_t$ extends to a smooth function on $M$ which is the potential of $\exp(x_t)^*\omega_t$ with respect to the Kähler form $\omega_0$, that so 
$$
\exp(x_t)^* \omega_t - \omega_0 = \sqrt{-1} \, \overline{\partial} \partial \, \overline{\varphi}_t.
$$

\begin{lemma}\label{supphi<C}
There exists a constant $C'>0$ independent of $t$ such that for every $t \in [0,+\infty[$
$$
\vert \sup_M \overline{\varphi}_t \vert \leq C'.
$$
\end{lemma}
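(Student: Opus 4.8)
The plan is to reduce the statement to a pointwise estimate on $\aaa_1$ and then exploit that the image of the gradient of the renormalized convex potential is the \emph{fixed} polytope $2\Delta$, uniformly in $t$. First I would observe that, since $\overline{\varphi}_t$ is $K$-invariant and extends smoothly to $M$, the polar decomposition (Proposition \ref{polar}) together with the density of $G/H$ in $M$ gives
$$
\sup_M \overline{\varphi}_t = \sup_{x \in \aaa_1}\bigl(\overline{\overline{u}}_t(x) - u_0(x)\bigr),
$$
so it suffices to bound the right-hand side independently of $t$, in both directions.

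For the upper bound I would use that $\overline{\overline{u}}_t$ is convex, attains its minimum $0$ at the origin (by the construction \eqref{ubb} and Lemma \ref{wtext}), and has $\nabla \overline{\overline{u}}_t(\aaa_1) \subset 2\Delta$: indeed $\overline{\overline{u}}_t$ differs from $u_t$ only by a translation and an additive constant, $u_t$ is the convex potential of the Kähler form $\omega_t \in 2\pi\, c_1(M)$, hence of a positively curved $K$-invariant Hermitian metric on $K_M^{-1}$, and Proposition \ref{potconv} applies with the polytope $2\Delta$, which does not depend on $t$. Integrating $d\overline{\overline{u}}_t$ along the segment from $0$ to $x$ and using $\nabla\overline{\overline{u}}_t\in 2\Delta$ together with the definition \eqref{fctsupport} of $v_{2\Delta}$, one gets $\overline{\overline{u}}_t(x)\le v_{2\Delta}(x)$ for every $x$. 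On the other hand, Proposition \ref{potconv} applied to $\omega_0$ yields a constant $C_0>0$, independent of $t$, with $u_0 \ge v_{2\Delta} - C_0$ on $\aaa_1$. Combining the two, $\overline{\overline{u}}_t(x) - u_0(x) \le v_{2\Delta}(x) - v_{2\Delta}(x) + C_0 = C_0$ for all $x\in\aaa_1$ and all $t\ge 0$.

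For the lower bound it suffices to evaluate at the origin, which corresponds to $eH$: $\sup_M \overline{\varphi}_t \ge \overline{\varphi}_t|_{x=0} = \overline{\overline{u}}_t(0) - u_0(0) = -u_0(0)$, a constant independent of $t$. Setting $C' := \max\{C_0,\, |u_0(0)|\}$ then gives $|\sup_M \overline{\varphi}_t| \le C'$ for all $t\in[0,+\infty[$.

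The step that needs the most care is checking that the normalization defining $\overline{\overline{u}}_t$ — translating by $x_t$ and subtracting $m_t$ — preserves exactly the two structural facts used above, namely that the minimum equals $0$ and is located at the origin, and that the gradient image is the $t$-independent polytope $2\Delta$. It is worth noting that, in contrast with the full $C^0$-estimate for $\overline{\varphi}_t$ itself, the bound on $\sup_M \overline{\varphi}_t$ requires neither Perelman's estimate (Lemma \ref{Perelman}) nor Lemma \ref{mt<C}, only the fact that the moment polytope $2\Delta$ is the same along the whole flow.
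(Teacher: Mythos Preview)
Your argument is correct. The upper bound is essentially the paper's own argument: both you and the paper exploit convexity of $\overline{\overline{u}}_t$ together with $\nabla\overline{\overline{u}}_t(\aaa_1)\subset 2\Delta$ to obtain $\overline{\overline{u}}_t\le v_{2\Delta}$, and then compare with $u_0$ via Proposition~\ref{potconv}. (The paper phrases it through the subgradient inequality rather than integrating along a segment, and a stray $+m_t$ in the displayed inequality leads it to invoke Lemma~\ref{mt<C} where, as you note, this is not actually needed.)

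For the lower bound your route is genuinely different and more elementary. You simply evaluate at the point $eH$, using that by construction $\overline{\overline{u}}_t(0)=0$, so $\sup_M\overline{\varphi}_t\ge -u_0(0)$. The paper instead passes to the global complex Monge--Amp\`ere equation satisfied by $\overline{\varphi}_t$, uses Perelman's estimate (Lemma~\ref{Perelman}) to bound $\partial\varphi_t/\partial t-c_t$, integrates over $M$, reduces to $\aaa_1$ and changes variables to get the two-sided integral bound \eqref{borneinequation}, deduces \eqref{supinfbarvarphi}, and finally removes $m_t$ via Lemma~\ref{mt<C}. Your one-line argument suffices for the lemma as stated and confirms your remark that neither Lemma~\ref{Perelman} nor Lemma~\ref{mt<C} is required here. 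The trade-off is that the paper's longer route produces the intermediate estimates \eqref{borneinequation} and \eqref{supinfbarvarphi} as byproducts, and these are reused later in the proof of Proposition~\ref{estimeC^0}; your shortcut does not supply them, so those inequalities would have to be established separately when they are needed.
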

\begin{proof}
By $K$-invariance and density of $G/H$, it is sufficient to prove the result on $P/H$ i.e. where the function $\overline{\varphi}_t$ has the following expression
\begin{align*}
\overline{\varphi}_t &= \overline{u}_t( \cdot + x_t) - m_t - u_0
= u_t( \cdot + x_t) - m_t - c_t -u_0.
\end{align*}
By convexity, we have
$$
m_t=\overline{u}_t(0) \geq \overline{u}_t(x) + (\nabla \overline{u}_t(x), -x ),
$$
and so, by definition of the support function $v_{2 \Delta}$ and by Proposition \eqref{potconv},
$$
m_t + v(x) \geq \overline{u}_t(x).
$$
Hence, by Proposition \ref{mt<C}, there exists $F$ independant of $t$ such that
\begin{align*}
\overline{\varphi}_t (x) & \leq v(x) - u_0(x) + m_t  \leq F .
\end{align*}

For the lower bound, using Equation \eqref{KRF-MAER}, the function $\overline{\overline{u}}_t$ satisfies the following real Monge-Ampère equation on $\aaa_1$ :
$$
\det((\overline{\overline{u}}_t)_{ij})= \exp \left[ \cfrac{\partial u}{\partial t}( \cdot + x_t) -c_t - \overline{\overline{u}}_t -m_t \right] \prod_{\alpha \in\Phi_P^+} \langle \alpha, \nabla \overline{ \overline{u}}_t + 4 \tau_{\rho_P} \rangle^{-1}.
$$
By density since $\overline{\overline{u}}_{t} = \overline{\varphi}_t +u_0$ and $u_t=\varphi_t + u_0$, we see that $\overline{\varphi}_t$ satisfies the following complex Monge-Ampère equation on $M$:
$$
\det((g_0)_{i \overline{j}} + (\overline{\varphi}_t)_{i \overline{j}}) = \exp \left[h + \cfrac{\partial \varphi_t }{\partial t}( \cdot + x_t) - c_t - \overline{\varphi}_t -m_t \right].
$$
Now, since $h$ is a continuous function on a compact manifold and thanks to Lemma \ref{Perelman}, there exists a constant $B'>0$ independent of $t$ such that
$$
\det((g_0)_{i \overline{j}} + (\overline{\varphi_t})_{i \overline{j}})e^{-B'} \leq e^{- \overline{\varphi}_t -m_t} \leq (g_0)_{i \overline{j}} + (\overline{\varphi_t})_{i \overline{j}})e^{B'},
$$
and hence, by integrating,
$$
e^{-B'} \int_M \det( (g_0)_{i \overline{j}} + (\overline{\varphi_t})_{i \overline{j}})) dV \leq \int_M e^{- \overline{\varphi}_t -m_t} dV \leq e^{B'} \int_M \det((g_0)_{i \overline{j}} + (\overline{\varphi_t})_{i \overline{j}}) dV. \\
$$
By density we can reduce the integration on $G/H$ and doing an integration along the fiber, there exits a constant $C''$ independent of $t$ such that
\begin{align*}
e^{-B'} \, C'' \int_{\aaa_1} \det( (\overline{\overline{u}}_t)_{ij}) \prod_{\alpha \in \Phi_P^+} \langle \alpha, \nabla \overline{\overline{u}}_t(x) + 4 \tau_{\rho_P} \rangle dx & \leq \int_M e^{- \overline{\varphi}_t -m_t} dV \\ & \leq e^{B'} \, C'' \int_{\aaa_1} \det((\overline{\overline{u}}_t)_{ij}) \prod_{\alpha \in \Phi_P^+} \langle \alpha, \nabla \overline{\overline{u}}_t(x) + 4 \tau_{\rho_P} \rangle dx
\end{align*}
So, by doing the change of variables $y= \nabla \overline{\overline{u}_t}(x) + 4 \tau_{\rho_P}$ and applying Proposition \ref{potconv}, there exist two constants time independent $c_1$ and $c_2$ such that
\begin{equation}\label{borneinequation}
c_1:= e^{-B'} \, C'' \int_{-2 \Delta^+} \prod_{\alpha \in \Phi_P^+} (\alpha, p ) dp   \leq \int_M e^{- \overline{\varphi}_t -m_t} dV \leq c_2:= e^{B'} \, C''\int_{-2 \Delta^+} \prod_{\alpha \in \Phi_P^+}( \alpha, p ) dp .
\end{equation}
Now, since $M$ is compact, this implies that there exists a time independent constant $C_0'$ such that
\begin{equation}\label{supinfbarvarphi}
\sup_M (\overline{\varphi}_t + m_t ) \geq -C_0 \text{ et } \inf_M (\overline{\varphi}_t + m_t) \leq C_0.
\end{equation}
We can conclude, thanks to Lemma \ref{mt<C}, as wanted
$$
\sup_M \overline{\varphi} \geq -C_0' - C.
$$
\end{proof}
Let us end this section with the following lemma.

\begin{lemma}\label{majosc}
There exists a constant $C_0>0$ such that
$$
osc_M \overline{\varphi}_t := \sup_M \overline{\varphi}_t -\inf_M \overline{\varphi}_t \leq C_0 \left( 1 + I(\overline{\varphi})\right)^{n+1},
$$
where
$$
I(\overline{\varphi}_t) = \frac{1}{V} \int_M \overline{\varphi}_t (\omega^n_0 - \omega^n_{\overline{\varphi}}),
$$
where we denote 
$$
V= \operatorname{Vol}(M,g_0).
$$
\end{lemma}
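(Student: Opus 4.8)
The strategy is to transfer the estimate to the convex potential $\overline{\overline{u}}_t$ on $\aaa_1$, to bound the weighted $L^1$--deviation of $\overline{\overline{u}}_t$ from the support function $v_{2\Delta}$ by $I(\overline\varphi_t)$, and then to upgrade this to an $L^\infty$--bound by an iteration on level sets of the kind used in the proof of Lemma \ref{mt<C} (itself modelled on Lemma 2.1 of \cite{WZ}). By $K$--invariance and density of $G/H$ in $M$, $\operatorname{osc}_M\overline\varphi_t$ equals the oscillation on $\aaa_1$ of $\overline\varphi_t=\overline{\overline{u}}_t-u_0$. Since $\nabla u_0$ maps $\aaa_1$ onto $2\Delta$ with $0$ in the interior (Proposition \ref{potconv}), Legendre duality gives $\overline{\overline{u}}_t\le v_{2\Delta}$ on $\aaa_1$, while $|u_0-v_{2\Delta}|\le E_0$ for a fixed constant $E_0$; hence $\operatorname{osc}_M\overline\varphi_t\le E_t+2E_0$ with $E_t:=\|v_{2\Delta}-\overline{\overline{u}}_t\|_{L^\infty(\aaa_1)}\ge0$, and it suffices to prove $E_t\le C\,(1+I(\overline\varphi_t))^{n+1}$. (Using $\overline{\overline{u}}_t(0)=\min\overline{\overline{u}}_t=0$ and $\nabla\overline{\overline{u}}_t(0)=0$ one also checks that $v_{2\Delta}-\overline{\overline{u}}_t\ge0$ is nondecreasing along every ray issued from the origin, so $E_t$ is a genuine mismatch at infinity.)

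First I would bound the weighted $L^1$--norm of $v_{2\Delta}-\overline{\overline{u}}_t$. With $\omega_{\overline\varphi_t}=\exp(x_t)^*\omega_t=\omega_0+\sqrt{-1}\,\partial\overline{\partial}\,\overline\varphi_t$ and $T=\sum_{j=0}^{n-1}\omega_0^{\,j}\wedge\omega_{\overline\varphi_t}^{\,n-1-j}$, an integration by parts gives $I(\overline\varphi_t)=\tfrac1V\int_M\sqrt{-1}\,\partial\overline\varphi_t\wedge\overline{\partial}\,\overline\varphi_t\wedge T\ge\tfrac{c}{V}\,\|\nabla\overline\varphi_t\|^2_{L^2(\omega_0)}$, since $T\ge\omega_0^{\,n-1}$. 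Moreover Lemma \ref{supphi<C} bounds $\sup_M\overline\varphi_t$, while the Green function of $(M,\omega_0)$ together with $\Delta_{\omega_0}\overline\varphi_t\ge-n$ bounds $\sup_M\overline\varphi_t-\tfrac1V\int_M\overline\varphi_t\,\omega_0^n$; hence $\bigl|\tfrac1V\int_M\overline\varphi_t\,\omega_0^n\bigr|\le C$ uniformly in $t$. Combining these with the Poincar\'e inequality on $(M,\omega_0)$, Cauchy--Schwarz, and the expression of integrals over $\aaa_1$ through the density $\rho$ of $\omega_0^n$ provided by Theorem \ref{delomega} --- which is positive, bounded below on each bounded subset of $\aaa_1$, and integrable, since $\int_{\aaa_1}\rho\,dx$ is a constant multiple of $\int_{2\Delta}\prod_{\alpha\in\Phi^+_P}\langle\alpha,q+4\tau_{\rho_P}\rangle\,dq<\infty$ --- one obtains $\int_{\aaa_1}(v_{2\Delta}-\overline{\overline{u}}_t)\,\rho\,dx\le C\,(1+I(\overline\varphi_t))$.

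It then remains to pass from this $L^1$--control to the $L^\infty$--bound $E_t$. Set $A_k=\{x\in\aaa_1:\ k\le v_{2\Delta}(x)-\overline{\overline{u}}_t(x)\le k+1\}$. Convexity of $\overline{\overline{u}}_t$ shows that $\bigcup_{i\le k}A_i$ is contained in fixed dilates of $A_0$; the real Monge--Amp\`ere equation \eqref{KRF-MAER} for $u_t$, combined with Perelman's estimate (Lemma \ref{Perelman}) and the bound on $m_t$ (Lemma \ref{mt<C}), yields a lower bound for $\det\operatorname{Hess}(\overline{\overline{u}}_t)$ on $A_0$; then John's ellipsoid (Lemma \ref{ellip}) and the comparison principle for the real Monge--Amp\`ere operator turn this into a lower bound for the Euclidean size of $A_0$ in terms of $E_t$, exactly as in the proof of Lemma \ref{mt<C}. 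Inserting these estimates into the weighted $L^1$--bound above produces a convergent geometric series summing to $\le C\,(1+I(\overline\varphi_t))^{n+1}$, the exponent $n+1$ arising from the dilation/volume count.

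The hard part is precisely this last step: an $L^1$--bound by itself never controls the supremum of a convex function --- an arbitrarily deep, thin spike has bounded $L^1$--norm --- so the Monge--Amp\`ere equation \eqref{KRF-MAER} must be used in an essential way, through the lower bound on $\det\operatorname{Hess}(\overline{\overline{u}}_t)$, and keeping track of how the Euclidean size of the level sets $A_k$ grows is the technical heart of the argument, where the power $n+1$ is produced. This step adapts to the horospherical setting the corresponding arguments of \cite{WZ} and \cite{ZZZ}.
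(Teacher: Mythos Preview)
The paper does not prove this lemma: it simply refers to \cite{ZZZ} and to Proposition~3.1 of \cite{TZx}, so there is no in--paper argument to compare your proposal with.

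Your reduction to $E_t:=\sup_{\aaa_1}(v_{2\Delta}-\overline{\overline u}_t)$ and the weighted $L^1$--bound $\int_{\aaa_1}(v_{2\Delta}-\overline{\overline u}_t)\,\rho\,dx\le C\,(1+I(\overline\varphi_t))$ are reasonable. The genuine gap is in the last step. You try to run the level--set iteration of Lemma~\ref{mt<C} with $A_k=\{k\le v_{2\Delta}-\overline{\overline u}_t\le k+1\}$, but the mechanism does not transfer. In Lemma~\ref{mt<C} the sets $\{\overline u_t\le m_t+k+1\}$ are sublevel sets of a \emph{convex} function, hence convex and bounded, and convexity is exactly what gives the inclusion $\bigcup_{i\le k}A_i\subset(k+1)\cdot A_0$. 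Here, along any ray from the origin the function $s\mapsto v_{2\Delta}(se)-\overline{\overline u}_t(se)$ is \emph{concave} (linear minus convex), so the dilation inclusion goes the wrong way: for concave $f$ with $f(0)=0$ one has $f(\lambda s)\le\lambda f(s)$ for $\lambda\ge1$, hence $(k+1)\cdot A_0\subset\bigcup_{i\le k}A_i$, not the reverse. Moreover $A_0=\{v_{2\Delta}-\overline{\overline u}_t\le1\}$ always contains a neighbourhood of the origin and is typically unbounded (any ray with $\lim_{s\to\infty}f(s)<1$ lies entirely in $A_0$), so John's ellipsoid lemma (Lemma~\ref{ellip}) does not apply, and there is no ``lower bound for the Euclidean size of $A_0$ in terms of $E_t$'' of the kind you invoke. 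Finally, the weight $\rho$ --- the density of $\omega_0^n$ in Theorem~\ref{delomega} --- decays at infinity, so a weighted $L^1$--bound carries no information about the behaviour of $v_{2\Delta}-\overline{\overline u}_t$ far from the origin, which is precisely where the supremum $E_t$ is approached.

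In short, the last paragraph is not a proof: the structural features (convex sublevel sets, boundedness, the dilation inclusion) that make the argument of Lemma~\ref{mt<C} work are all absent for the function $v_{2\Delta}-\overline{\overline u}_t$. The arguments in \cite{TZx,ZZZ} proceed instead by PDE methods on $M$: one controls $\sup_M\overline\varphi_t-\tfrac1V\int_M\overline\varphi_t\,\omega_0^n$ via the Green function of $\omega_0$, and controls $\tfrac1V\int_M\overline\varphi_t\,\omega_{\overline\varphi_t}^n-\inf_M\overline\varphi_t$ by an iteration applied to the $\omega_{\overline\varphi_t}$--plurisubharmonic function $-\overline\varphi_t$, using the complex Monge--Amp\`ere equation for $\overline\varphi_t$ (together with Lemma~\ref{Perelman} and Lemma~\ref{mt<C}) to relate $\omega_{\overline\varphi_t}^n$ and $e^{-\overline\varphi_t}\omega_0^n$; the exponent $n+1$ is produced by that iteration, not by a convex--geometric dilation count.
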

\begin{proof}
The proof is stated in \cite{ZZZ} and inspired at the origin by Proposition 3.1 of \cite{TZx}.
\end{proof}
\subsection{$\C^0$-estimate of $\overline{\varphi}_t$}

This section recalls general results which are extracted to \cite{ZZZ}. In particular, we recall (and modify when necessary) Propositions 3.1 and 3.2 and Corollary 3.1 

Recall (see Proposition \ref{formX}) that we have a vector field $X \in \zzz(\ggg)$ such that 
\begin{equation}\label{Wagner3}
F_{X^{1,0}} \equiv 0.
\end{equation}
Now, we consider the one paremeter subgroup of automorphims $\sigma_t = \exp(t \, X)$ generated by $X$. We define the familly of potentials $(\varphi_t')_{t \in [0,+\infty[}$ by the formula :
$$
\sigma^*_t \omega_{\varphi_t} = \omega_0 + \cfrac{\sqrt{-1}}{2 \pi} \partial \overline{\partial} \varphi_t '.
$$
Now, thanks to the equation \eqref{KRF-MAER}, we have
$$
\cfrac{\partial  \omega_{\varphi_t'}}{\partial t} = -Ric(\omega_{\varphi_t'}) + \omega_{\varphi_t'} + \LLL_X \omega_{\varphi_t'}.
$$
Moreover, by the maximum principle, we show that the previous equation is equivalent modulo a constant to the following equation
\begin{equation}\label{equationphi'}
\cfrac{\partial \varphi_t'}{\partial t}  =  \log \omega^n_{\varphi_t'} - \log \omega^n_0 + \varphi_t' +\theta_X + X(\varphi_t') -h. 
\end{equation}
Now, we introduce the following space of potentials 
$$
\mathcal{M}_X(\omega_0):= \lbrace \varphi \in  \C^{\infty}(M) ~/~ \omega_{\varphi} := \omega_0 + \sqrt{-1} \partial \overline{\partial} \varphi >0,~~ \text{Im}(X)(\varphi)=0 \rbrace.
$$
and we define the \textit{generalized K$-$energy functional associated to $X$} defined in $\mathcal{M}_X(\omega_0)$ by
\begin{align*}
\tilde{\mu}(\varphi) = & - \cfrac{n \sqrt{-1}}{2 \pi V} \int_0^1 \int_M \dot \psi_t \left[ Ric(\omega_{\psi_t}) - \omega_{\psi_t}  \right] \wedge e^{\theta_X(\omega_{\psi_t})} \omega^{n-1}_{\psi_t} \wedge dt \\
& + \cfrac{n} {4 \pi^2 V} \int_0^1 \int_M  \dot \psi_t  \left[ \partial \overline{\partial} \theta_X (\omega_{\psi_t}) \right] \wedge e^{\theta_X(\omega_{\psi_t})} \omega^{n-1}_{\psi_t} \wedge dt \\
& + \cfrac{n} {4 \pi^2 V} \int_0^1 \int_M \dot \psi_t  \left[ \partial \left( h_{\omega_{\psi_t}} - \theta_X (\omega_{\psi_t}) \right) \wedge \overline{\partial} \theta_X(\omega_{\psi_t}) \right]  \wedge e^{\theta_X(\omega_{\psi_t}} \omega^{n-1}_{\psi_t} \wedge dt, 
\end{align*}
where $(\psi_t)_{t \in [0,1]}$ is a path connecting 0 to $\varphi$ in $\mathcal{M}_X(\omega_0)$ and $\dot \psi_t= \frac{\partial \psi_t}{\partial t}$. 

\begin{lemma}\label{muphi}
Let $\sigma \in Aut_r(M)$. If we define the potential $\varphi_{\sigma_t}$ by
$$
\sigma^* \omega_{\varphi_t} = \omega_g + \sqrt{-1} \partial \overline{\partial} \varphi_{\sigma_t}.
$$
We have $\varphi_{\sigma_t} \in \mathcal{M}_X(\omega_0)$ and $\tilde{\mu}(\varphi)=\tilde{\mu}(\varphi_{\sigma_t})$.
\end{lemma}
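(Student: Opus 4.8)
For $\varphi\in\mathcal{M}_X(\omega_0)$ and $\sigma\in\operatorname{Aut}_r(M)$, let $\varphi_\sigma$ denote the potential, determined up to an additive constant, by $\sigma^*\omega_\varphi=\omega_0+\sqrt{-1}\,\partial\overline{\partial}\varphi_\sigma$. The plan is to reduce the invariance statement to one-parameter subgroups and to identify the derivative of $\tilde{\mu}$ along such an orbit with the modified Futaki invariant of Tian--Zhu, which vanishes on $\eta_r(M)$ by Proposition~\ref{Fnuletar}.

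First I would check that $\varphi_\sigma\in\mathcal{M}_X(\omega_0)$. Positivity is immediate since $\omega_{\varphi_\sigma}=\sigma^*\omega_\varphi$ is Kähler. For the condition $\operatorname{Im}(X)(\varphi_\sigma)=0$, recall that $\operatorname{Im}(X)$ lies in $J\aaa_1\subset\zzz(\ggg)\cap\kkk$; being central in $\eta_r^\RR(M)$, it is fixed by the adjoint action, so every $\sigma\in\operatorname{Aut}_r(M)$ commutes with the flow of $\operatorname{Im}(X)$. Since $\omega_0$ is $\operatorname{Im}(X)$-invariant and $\varphi\in\mathcal{M}_X(\omega_0)$ makes $\omega_\varphi$ invariant, the form $\sqrt{-1}\,\partial\overline{\partial}\varphi_\sigma=\sigma^*\omega_\varphi-\omega_0$ is $\operatorname{Im}(X)$-invariant, whence $\sqrt{-1}\,\partial\overline{\partial}(\operatorname{Im}(X)(\varphi_\sigma))=0$; as a real function on the compact $M$ this forces $\operatorname{Im}(X)(\varphi_\sigma)$ to be constant, and evaluating at a maximum point of $\varphi_\sigma$ shows it vanishes. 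The same argument applied to $\sigma_s=\exp(sY)$ shows the path $s\mapsto\varphi_{\sigma_s}$ stays in $\mathcal{M}_X(\omega_0)$, hence is an admissible path for $\tilde{\mu}$.

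Writing $\sigma$ as a finite product of exponentials (using connectedness of $\operatorname{Aut}_r(M)$), it then suffices to show $s\mapsto\tilde{\mu}(\varphi_{\sigma_s})$ is constant for $\sigma_s=\exp(sY)$ with $Y\in\eta_r^\RR(M)$. Differentiating $\sigma_s^*\omega_\varphi=\omega_0+\sqrt{-1}\,\partial\overline{\partial}\varphi_{\sigma_s}$ in $s$ and using $\LLL_Y\omega_{\varphi_{\sigma_s}}=\sqrt{-1}\,\partial\overline{\partial}\theta_Y(\omega_{\varphi_{\sigma_s}})$ together with the $\sigma_s$-equivariance of holomorphy potentials gives $\dot\varphi_{\sigma_s}=\theta_Y(\omega_{\varphi_{\sigma_s}})$ modulo an additive constant. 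Feeding this velocity into the variational formula defining $\tilde{\mu}$, and using $[X,Y]=0$ (so that $\theta_X$ is $\sigma_s$-equivariant and $\sigma_s$ may be used as a change of variables in every integral) together with the rule $\theta_X(\omega_\psi)=\theta_X+X(\psi)$, the three integrands should collapse after integration by parts to $\frac{d}{ds}\tilde{\mu}(\varphi_{\sigma_s})=c\,F_X(Y)$ for a fixed constant $c$; this is precisely the sense in which $\tilde{\mu}$ is a primitive of the modified Futaki invariant (compare the proof of Proposition~2.1 in~\cite{TZ2}). By Proposition~\ref{Fnuletar} and~\eqref{Wagner3}, $F_X$ vanishes on $\eta_r(M)\ni Y$, so $\tilde{\mu}(\varphi_{\sigma_s})$ is constant; taking $s=1$ and iterating over the factors yields $\tilde{\mu}(\varphi_\sigma)=\tilde{\mu}(\varphi)$.

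The hard part is this last computation: differentiating the three-term functional $\tilde{\mu}$ along $\varphi_{\sigma_s}$ and combining the $[\Ric-\omega]$-term, the $[\partial\overline{\partial}\theta_X]$-term and the $[\partial(h-\theta_X)\wedge\overline{\partial}\theta_X]$-term, via repeated integration by parts and the identities $\LLL_Y\omega=\sqrt{-1}\,\partial\overline{\partial}\theta_Y(\omega)$ and $\theta_X(\omega_\psi)=\theta_X+X(\psi)$, into the single integral defining $F_X(Y)$. Once $F_X|_{\eta_r(M)}=0$ is invoked, the membership in $\mathcal{M}_X(\omega_0)$, the reduction to one-parameter groups, and the conclusion are all formal.
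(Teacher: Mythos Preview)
Your proposal is correct and follows essentially the same route as the paper: use centrality of $X$ to obtain $\varphi_\sigma\in\mathcal{M}_X(\omega_0)$, reduce to one-parameter subgroups $\sigma_s=\exp(sY)$ with $Y\in\eta_r^\RR(M)$, and show $\frac{d}{ds}\tilde{\mu}(\varphi_{\sigma_s})=\operatorname{Re}\,F_{X^{1,0}}(Y^{1,0})$, which vanishes by Proposition~\ref{Fnuletar}. The only difference is cosmetic: the paper outsources the derivative identity $\frac{d}{ds}\tilde{\mu}(\varphi_{\sigma_s})=\operatorname{Re}\,F_{X^{1,0}}(Y^{1,0})$ to an external computation, whereas you sketch the integration-by-parts argument and give a more explicit justification of the $\operatorname{Im}(X)(\varphi_\sigma)=0$ step.
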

\begin{proof}
Because $X \in \zzz(\ggg)$, we have
$$
\forall \rho \in \operatorname{Aut}_r(M),~~\rho^* \LLL_X \omega_{\varphi_t} = \LLL_X (\rho^* \omega_{\varphi_t}),
$$
and so
$$
\varphi_{\rho_t} \in \mathcal{M}_X(\omega_0),~~ \forall \rho \in Aut_r(M). 
$$
Now, if $Y \in \ggg$ then $Y$ induce a one-parameter subgroup in $\operatorname{Aut}_r(M)$ and so, by the above, $\rho_t \in \mathcal{M}_X(\omega_0)$.  Moreover, we can prove (see Computation $2.13$ in \cite{test}) that
$$
\cfrac{d \tilde{\mu}(\varphi_{\rho_t})}{dt}= \operatorname{Re}(F_{X^{1,0}}(Y^{1,0})).
$$
Moreover, we have $F_X(Y)=0$ (see Equation \eqref{Wagner3}) and so $\frac{d \tilde{\mu}(\varphi_{\rho_t})}{dt}=0$ and hence
$$
\tilde{\mu}(\varphi_t)=\tilde{\mu}(\varphi_{\rho_t}).
$$
\end{proof}

\begin{lemma}\label{muphi0}
For all $t \in [0, +\infty[$, we have
$$
\tilde{\mu}(\overline{\varphi}_t) \leq 0.
$$
\end{lemma}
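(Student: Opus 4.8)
The plan is to deduce the inequality from two ingredients: the monotonicity of the generalized K\"ahler energy $\tilde\mu$ along the twisted flow \eqref{equationphi'}, and the invariance of $\tilde\mu$ under $\operatorname{Aut}_r(M)$ recorded in Lemma \ref{muphi}. The upshot will be the chain $\tilde\mu(\overline\varphi_t)=\tilde\mu(\varphi_t)=\tilde\mu(\varphi'_t)\le\tilde\mu(\varphi'_0)=0$.

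First I would check that the path $s\mapsto\varphi'_s$ is admissible, i.e.\ lies in $\mathcal M_X(\omega_0)$ so that $\tilde\mu$ is even defined along it. Write $X=\xi-\sqrt{-1}\,J\xi$ with $\xi\in\aaa_1$ (Proposition \ref{formX}). Since $\aaa_1\subset\zzz(\ggg)$ and $\zzz(\ggg)=\aaa_1\oplus J\aaa_1$ is $J$-stable, the imaginary part $\operatorname{Im}(X)=-J\xi$ lies in $\zzz(\ggg)\cap\kkk$; hence $\sigma_t=\exp(tX)$ is central in $G=\operatorname{Aut}_r(M)$, so it commutes with $K$ and each $\sigma_t^*\omega_{\varphi_t}$ remains $K$-invariant, in particular $\operatorname{Im}(X)$-invariant. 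Thus $\varphi'_t\in\mathcal M_X(\omega_0)$ for all $t\ge0$, and the same argument (or Lemma \ref{muphi} directly) gives $\overline\varphi_t\in\mathcal M_X(\omega_0)$.

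The heart of the argument is the monotonicity of $t\mapsto\tilde\mu(\varphi'_t)$. Rewriting \eqref{equationphi'} in terms of Ricci potentials and using $\theta_X(\omega_{\varphi'_t})=\theta_X+X(\varphi'_t)$ yields $\dot\varphi'_t=\theta_X(\omega_{\varphi'_t})-h_{\omega_{\varphi'_t}}$ up to an additive constant, where $h_{\omega_{\varphi'_t}}$ is the (normalized) Ricci potential of $\omega_{\varphi'_t}$. Substituting this into the first variation of $\tilde\mu$ (obtained by differentiating its defining path integral) and integrating by parts, one obtains
$$
\frac{d}{dt}\,\tilde\mu(\varphi'_t)=-\frac{n}{2\pi V}\int_M \lvert\overline\partial\dot\varphi'_t\rvert^2_{\omega_{\varphi'_t}}\;e^{\theta_X(\omega_{\varphi'_t})}\,\omega_{\varphi'_t}^n\ \le\ 0,
$$
which is exactly the monotonicity established in \cite{ZZZ} (going back to \cite{TZx}); I would invoke it rather than redo the computation. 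Since $\sigma_0=\operatorname{id}$ and $\varphi_0=0$ force $\varphi'_0$ to be a constant, and $\tilde\mu$ vanishes on constants (it vanishes at $0$ via the constant path, and it does not see additive constants — this is already implicit in the statement of Lemma \ref{muphi}), we get $\tilde\mu(\varphi'_0)=0$, whence $\tilde\mu(\varphi'_t)\le0$ for all $t$.

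Finally I would transfer the bound to $\overline\varphi_t$ using Lemma \ref{muphi}. Up to the fixed normalization and additive constants $m_t,c_t$, the form $\sigma_t^*\omega_{\varphi_t}$ has potential $\varphi'_t$ and $\exp(x_t)^*\omega_{\varphi_t}$ has potential $\overline\varphi_t$, and both $\sigma_t=\exp(tX)$ and $\exp(x_t)$ with $x_t\in\aaa_1$ belong to $\operatorname{Aut}_r(M)=G$. Applying Lemma \ref{muphi} twice therefore gives $\tilde\mu(\overline\varphi_t)=\tilde\mu(\varphi_t)=\tilde\mu(\varphi'_t)\le0$, which is the claim. The only genuinely substantial point is the monotonicity formula of the third paragraph; everything else is bookkeeping with $K$-invariance, with the centrality of $X$, and with Lemma \ref{muphi}.
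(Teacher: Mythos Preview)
Your proof is correct and follows essentially the same route as the paper: verify $\varphi'_t\in\mathcal M_X(\omega_0)$, use the monotonicity formula \eqref{derivemut} for $\tilde\mu$ along the twisted flow \eqref{equationphi'} to get $\tilde\mu(\varphi'_t)\le\tilde\mu(\varphi'_0)=0$, and then invoke Lemma~\ref{muphi} (plus invariance of $\tilde\mu$ under additive constants) to obtain the chain $\tilde\mu(\overline\varphi_t)=\tilde\mu(\varphi_t)=\tilde\mu(\varphi'_t)\le 0$. Your write-up is in fact a bit more careful than the paper's about why $\varphi'_0$ is only a constant (not literally $0$) and about the centrality argument for $K$-invariance, but the strategy is identical.
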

\begin{proof}
First, $\varphi_t' \in \mathcal{M}_X(\omega_0)$ (see Lemma \ref{muphi}) and so we can compute $\tilde{\mu}(\varphi_t')$.
Nowe we get, thanks to Equation \eqref{equationphi'} and doing a integration by part :
\begin{equation}\label{derivemut}
\cfrac{d\tilde{\mu}(\varphi_t')}{d t} = \dfrac{ \sqrt{-1}}{V} \int_M \Vert \overline{\partial} \cfrac{ \partial \varphi_t'}{\partial t} \Vert^2_{\omega_{\varphi_t'}} e^{\theta_X + X(\varphi_t')} \omega^n_{\varphi_t'} \leq 0 .
\end{equation}
Hence, we get
\begin{equation}\label{mu'<0}
\tilde{\mu}(\varphi_t') \leq \tilde{\mu}(\varphi_0') = \tilde{\mu}(0)= 0.
\end{equation}
We can conclude, thanks to Lemma \ref{muphi} and Equation \eqref{mu'<0},
\begin{align*}
\tilde{\mu}(\overline{\varphi})& =\tilde{\mu} (\overline{\varphi}_t + m_t) 
= \tilde{\mu}(\varphi_t)
= \tilde{\mu}(\varphi_t')
 \leq 0.
\end{align*}
\end{proof}
With theses lemmas, we can prove the following $\C^0$-estimate for $\overline{\varphi}_t$.
\begin{proposition}\label{estimeC^0}
There exists a constant $C>0$ independant of time $t$ such that
$$
\Vert \overline{\varphi_t} \Vert_{\C^0(M)} \leq C.
$$
\end{proposition}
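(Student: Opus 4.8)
The plan is to deduce the estimate from the oscillation bound of Lemma~\ref{majosc} and the energy inequality $\tilde{\mu}(\overline{\varphi}_t)\le 0$ of Lemma~\ref{muphi0}, following the scheme of \cite{ZZZ} (which itself goes back to \cite{TZx}); the only point where the horospherical geometry is really used is a uniform bound on the Hamiltonian potential $\theta_X(\omega_{\overline{\varphi}_t})$. First I would note that Lemma~\ref{supphi<C} already controls $\sup_M\overline{\varphi}_t$, so that
$$
\Vert \overline{\varphi}_t \Vert_{\C^0(M)} \;\le\; |\sup_M \overline{\varphi}_t| + osc_M\,\overline{\varphi}_t \;\le\; C' + C_0\,(1+I(\overline{\varphi}_t))^{n+1},
$$
and the whole statement is reduced to a time-independent bound $I(\overline{\varphi}_t)\le C$.

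To get that bound I would use the generalized $K$-energy. Recall that on $\mathcal{M}_X(\omega_0)$ the functional $\tilde{\mu}$ admits a Chen--Tian type decomposition (see \cite{ZZZ,TZx}) of the form
$$
\tilde{\mu}(\psi)\;=\;\mathrm{Ent}(\psi)\;+\;(I_X-J_X)(\psi)\;+\;L_X(\psi),
$$
where $I_X,J_X$ are the Aubin functionals twisted by the weight $e^{\theta_X(\omega_\psi)}$, where $\mathrm{Ent}(\psi)=\frac{1}{V}\int_M \log\left(\frac{e^{\theta_X(\omega_\psi)}\omega_\psi^n}{e^{h}\omega_0^n}\right)e^{\theta_X(\omega_\psi)}\omega_\psi^n$ is a relative entropy, and where $L_X(\psi)$ gathers the terms that are linear in $\psi$ or in $h$. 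Since both measures $e^{\theta_X(\omega_\psi)}\omega_\psi^n$ and $e^{h}\omega_0^n$ have total mass $V$ (by the normalizations of $\theta_X$ and of $h$ in~\eqref{Dada1}), Jensen's inequality gives $\mathrm{Ent}(\psi)\ge 0$; and $L_X(\psi)$ is bounded below as soon as $\sup_M\psi$, $\Vert h\Vert_{\C^0}$ and $\Vert\theta_X(\omega_\psi)\Vert_{\C^0}$ are. Applying this with $\psi=\overline{\varphi}_t$, using $\tilde{\mu}(\overline{\varphi}_t)\le 0$ (Lemma~\ref{muphi0}), $\sup_M\overline{\varphi}_t\le C'$ (Lemma~\ref{supphi<C}), and the bound on $\theta_X$ below, I get $(I_X-J_X)(\overline{\varphi}_t)\le C$; hence $I_X(\overline{\varphi}_t)\le (n+1)(I_X-J_X)(\overline{\varphi}_t)\le C$, and finally $I(\overline{\varphi}_t)\le C$ since the weight $e^{\theta_X}$ is bounded away from $0$ and $\infty$.

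It remains to check that $\theta_X(\omega_{\overline{\varphi}_t})$ is bounded in $\C^0(M)$ uniformly in $t$. By Proposition~\ref{formX} we have $X=\xi-\sqrt{-1}J\xi$ with $\xi\in\aaa_1$, so $X$ lies in the center of $\ggg$, and on the dense orbit $P/H$ the Hamiltonian $\theta_X$ of the $K$-invariant form $\omega_{\overline{\varphi}_t}$ is, by Theorem~\ref{delomega}, an affine function of $\nabla\overline{\overline{u}}_t$, of the form $x\mapsto (\nabla\overline{\overline{u}}_t(x)+4\tau_{\rho_P},\,\xi)+d_t$, the constant $d_t$ being fixed by $\int_M e^{\theta_X(\omega_{\overline{\varphi}_t})}\omega_{\overline{\varphi}_t}^n=\int_M\omega_{\overline{\varphi}_t}^n$. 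By Proposition~\ref{potconv} the image of $\nabla\overline{\overline{u}}_t$ is the fixed compact polytope $2\Delta$, so $(\nabla\overline{\overline{u}}_t(x)+4\tau_{\rho_P},\xi)$ stays in a fixed bounded interval; the same normalization then pins $d_t$ into a fixed bounded interval, and by density and continuity this bound extends to all of $M$. This yields $\Vert\theta_X(\omega_{\overline{\varphi}_t})\Vert_{\C^0(M)}\le C$ with $C$ independent of $t$.

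The main obstacle I expect is writing the decomposition of $\tilde{\mu}$ precisely in the twisted and horospherical setting and checking that every term of $L_X$ is indeed bounded below uniformly in $t$ — which ultimately rests on Lemma~\ref{supphi<C} and, upstream, on Perelman's estimate (Lemma~\ref{Perelman}). A minor technical point is that the root factors $\prod_{\alpha\in\Phi^+_P}\langle\alpha,\nabla\overline{\overline{u}}_t+4\tau_{\rho_P}\rangle$ occurring in $\omega_{\overline{\varphi}_t}^n$ degenerate on the boundary of $2\Delta$; as in Lemmas~\ref{mt<C} and~\ref{supphi<C}, one writes the entropy and the comparison between $I$ and $I_X$ as integrals over $-2\Delta^+$, where the degeneracy is harmless because the integrand vanishes there as well.
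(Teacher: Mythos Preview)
Your approach is correct and follows the same overall scheme as the paper: reduce the $\C^0$ bound to a bound on $I(\overline{\varphi}_t)$ via Lemmas~\ref{supphi<C} and~\ref{majosc}, then extract that bound from $\tilde{\mu}(\overline{\varphi}_t)\le 0$ (Lemma~\ref{muphi0}). The packaging of the last step differs slightly. The paper does not unpack a Chen--Tian decomposition of $\tilde{\mu}$; instead it quotes the inequality $\tilde{\mu}(\varphi)\ge \tilde{F}(\varphi)-C$ from \cite{TZ2,DarvasRUbi}, where
\[
\tilde{F}(\varphi)=\tilde{J}(\varphi)-\tfrac{1}{V}\int_M \varphi\, e^{\theta_X}\omega_0^n-\log\Bigl(\tfrac{1}{V}\int_M e^{h-\varphi}\omega_0^n\Bigr),
\]
and then bounds the linear term by $\sup_M(\overline{\varphi}_t+m_t)$ (Lemmas~\ref{mt<C},~\ref{supphi<C}) and the log term by \eqref{borneinequation}; this yields $\tilde{J}(\overline{\varphi}_t)\le C$, and the comparison $\tilde{J}\ge c\,I$ is quoted from \cite{CTZ}. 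Note that in this route only the \emph{fixed} potential $\theta_X=\theta_X(\omega_0)$ enters, so no uniform control of $\theta_X(\omega_{\overline{\varphi}_t})$ is needed here --- the paper only invokes such a bound later (citing \cite{Zh}). Your moment-polytope argument for $\|\theta_X(\omega_{\overline{\varphi}_t})\|_{\C^0}$ is correct and is precisely the horospherical incarnation of that fact, but with the paper's packaging via $\tilde{F}$ it becomes unnecessary at this stage, making the argument a bit shorter.
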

\begin{proof}
Firstly, we recall (see \cite{TZ2} the definition functional $\tilde{F}$ on $\mathcal{M}_X(\omega_0)$):
$$
\tilde{F}(\varphi) := \tilde{J}(\varphi) - \cfrac{1}{V} \int_M \varphi e^{\theta_X} \omega^n_0 - \log \left( \dfrac{1}{V} \int_M e^{h-\varphi} \omega_0^n \right),
$$
with $\tilde{J}$ is defined by:
$$
\tilde{J}(\varphi):= \dfrac{1}{V} \int_0^1 \int_M
 \dot \psi_t  \left( e^{\theta_X}  \right) \omega^n_0 \wedge dt - \dfrac{1}{V} \int_0^1 \int_M
 \dot \psi_t  \left( e^{\theta_X + X(\psi_t)}  \right) \omega^n_{\psi_t} \wedge dt.  
 $$
where $\\psi_t$ is a path connecting $0$ to $\varphi$  in $\mathcal{M}_X(\omega_g)$ and $\dot \psi_t$ is the derivative of $\psi_t$ with respect to $t$. We can show that $\tilde{J}$ is independant of the path $\psi$ and that $\tilde{J}(\varphi)>0 $ (voir \cite{WZ}). We also prove ( see Lemma 5.1 of \cite{TZ2} or Formula 67 of \cite{DarvasRUbi}) that there exits a constant $C>0$ such that :
$$
\tilde{\mu}(\varphi) \geq \tilde{F}(\varphi) - C,~~\forall \varphi \in \mathcal{M}_X(\omega_0).
$$
By using Lemma \ref{muphi0} and the definition of $\overline{\varphi}$, we get
\begin{equation}\label{Fborn}
\tilde{F}(\overline{\varphi_t}+ m_t)= \tilde{F}(\varphi_t) < C,
\end{equation}
and hence by using Equations \eqref{Fborn} and \eqref{borneinequation} and Lemmas \ref{mt<C} and \ref{supphi<C}
\begin{align*}
\tilde{J}(\overline{\varphi}_t) &= \tilde{J}(\overline{\varphi}_t+ m_t)  = \tilde{F}(\overline{\varphi}_t + m_t) + \cfrac{1}{V} \int_M \left( \overline{\varphi}_t + m_t \right) e^{\theta_X} \omega^n_0 + \log \left( \dfrac{1}{V} \int_M e^{h- \overline{\varphi}_t - m_t} \omega_0^n \right)   \\
& \leq \tilde{F}(\overline{\varphi}_t+ m_t) + \cfrac{1}{V} \int_M \left( \overline{\varphi}_t + m_t \right) e^{\theta_X} \omega^n_0 + C_1  \leq  \tilde{F}(\overline{\varphi}_t+ m_t) + C_2 + C_1  \\
&  \leq  C + C_2 + C_1=:C_3  \\
\end{align*}
Now, if we set
$$
I(\overline{\varphi}_t) := \frac{1}{V} \int_M \overline{\varphi}_t (\omega^n_0 - \omega^n_{\overline{\varphi}_t}),
$$
then we know that, thanks to \cite{CTZ}, there exists a uniform constant $c>0$ such that
$$
\tilde{J}(\overline{\varphi}_t) \geq c \, I(\overline{\varphi}_t).
$$
By using, the preivous inegality, we get 
$$
I(\overline{\varphi}_t) \leq C_4:=C_3/c.
$$
Now Lemma \ref{majosc} tell us that
$$
osc_M \overline{\varphi}_t \leq C_5.
$$
and,by Equation \eqref{supinfbarvarphi}, 
$$
\Vert \overline{\varphi}_t + m_t \Vert_{\C^{0}(M)} \leq osc_M ( \overline{\varphi}_t +m_t) +2 \tilde{C} = osc_M \overline{\varphi}_t +2 \tilde{C} \leq C_6.
$$
Lemma \ref{mt<C} allows to conclude.
\end{proof}
\subsection{Uniforme estimate of $c_t$ and $\partial \varphi_t / \partial t$}
Now, we prove in this section that modulo a renormalisation of $h_t$, we can get uniform bounds of  $c_t$ and $\partial  \varphi_t / \partial t$. Before, we prove a lemma about $K$-energy functionnal :
\begin{lemma}\label{bornsupmu}
Let $(\varphi_t)_{t \in [0,+\infty[}$ be a solution of Equation \ref{NKRF}. We have
$$
\tilde{\mu}(\varphi_t) \geq C.
$$
\end{lemma}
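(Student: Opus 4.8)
The plan is to derive the uniform lower bound on $\tilde{\mu}(\varphi_t)$ from the $\C^0$-estimate of Proposition \ref{estimeC^0}, essentially reversing the way the upper bound $\tilde{\mu}\le 0$ was used there: this time I will invoke the inequality $\tilde{\mu}(\varphi)\ge \tilde{F}(\varphi)-C$ (recalled in the proof of Proposition \ref{estimeC^0}) together with a lower bound on $\tilde{F}$ coming from the $\C^0$-estimate.

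First I would check that the flow potentials $\varphi_t$ lie in $\mathcal{M}_X(\omega_0)$: the renormalized Kähler-Ricci flow preserves $K$-invariance and $\operatorname{Im}(X)\in\kkk$ by Proposition \ref{Fnuletar}, so $\operatorname{Im}(X)(\varphi_t)=0$. Then, exactly as in the proof of Lemma \ref{muphi0}, the invariance of $\tilde{\mu}$ under $\operatorname{Aut}_r(M)$ (Lemma \ref{muphi}), applied to the automorphism $\exp(x_t)\in\operatorname{Aut}_G(M)\subset\operatorname{Aut}_r(M)$, combined with the invariance of $\tilde{\mu}$ under the addition of a constant, gives $\tilde{\mu}(\varphi_t)=\tilde{\mu}(\overline{\varphi}_t)$. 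The same constant-invariance holds for $\tilde{F}$: the shift of the linear term $-\frac{1}{V}\int_M\varphi\,e^{\theta_X}\omega_0^n$ cancels that of $-\log\!\big(\frac{1}{V}\int_M e^{h-\varphi}\omega_0^n\big)$, using the normalisation $\int_M e^{\theta_X}\omega_0^n=\int_M\omega_0^n$, while $\tilde{J}$ is unchanged; hence $\tilde{F}(\varphi_t)=\tilde{F}(\overline{\varphi}_t)$.

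Next I would bound $\tilde{F}(\overline{\varphi}_t)$ from below. By Proposition \ref{estimeC^0} the quantity $\Vert\overline{\varphi}_t\Vert_{\C^0(M)}$ is bounded uniformly in $t$; since $h$ and $\theta_X$ are fixed functions on the compact manifold $M$, both $\frac{1}{V}\int_M\overline{\varphi}_t\,e^{\theta_X}\omega_0^n$ and $\log\!\big(\frac{1}{V}\int_M e^{h-\overline{\varphi}_t}\omega_0^n\big)$ are then uniformly bounded, and $\tilde{J}(\overline{\varphi}_t)\ge 0$ by \cite{WZ}; therefore $\tilde{F}(\overline{\varphi}_t)\ge -C'$ for a constant $C'$ independent of $t$. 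Combining this with $\tilde{\mu}(\varphi)\ge\tilde{F}(\varphi)-C$ on $\mathcal{M}_X(\omega_0)$ yields $\tilde{\mu}(\varphi_t)=\tilde{\mu}(\overline{\varphi}_t)\ge\tilde{F}(\overline{\varphi}_t)-C\ge -(C+C')$, which is the asserted uniform lower bound.

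I do not expect a serious obstacle: the argument is essentially an assembly of Proposition \ref{estimeC^0}, Lemma \ref{muphi}, and the two inequalities $\tilde{\mu}\ge\tilde{F}-C$ and $\tilde{J}\ge 0$. The only point requiring care is the bookkeeping of the additive constants $m_t$, $c_t$ and of the automorphism $\exp(x_t)$ when passing between $\varphi_t$ and $\overline{\varphi}_t$, and this is handled exactly as in the proof of Lemma \ref{muphi0}.
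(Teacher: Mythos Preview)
Your proposal is correct and follows essentially the same route as the paper: both use the invariance $\tilde{\mu}(\varphi_t)=\tilde{\mu}(\overline{\varphi}_t)$ (Lemma \ref{muphi} plus constant-invariance), the inequality $\tilde{\mu}\ge\tilde{F}-C$, the positivity $\tilde{J}\ge 0$, and the $\C^0$-estimate (Proposition \ref{estimeC^0}) to bound the remaining terms of $\tilde{F}$. The only cosmetic difference is that the paper writes the computation with $\overline{\varphi}_t+m_t$ and therefore also invokes Lemma \ref{mt<C} explicitly, whereas you work directly with the already $\C^0$-bounded $\overline{\varphi}_t$.
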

\begin{proof}
Indeed, we have, thanks to Equation \eqref{Fborn}, Lemma \ref{mt<C}, Proposition \ref{estimeC^0} and because $\tilde{J}(\overline{\varphi})>0$,
\begin{align*}
\tilde{\mu}(\varphi_t) &=  \tilde{\mu}(\overline{\varphi}_t +m_t) \geq \tilde{F}(\overline{\varphi}_t +m_t) -C 
 = \tilde{J}(\overline{\varphi}_t) - \cfrac{1}{V} \int_M \left( \overline{\varphi}_t - m_t \right) e^{\theta_X} \omega^n_0 + \log \left( \dfrac{1}{V} \int_M e^{h- \overline{\varphi}_t - m_t} \omega_0^n \right) -C \\
 & = \tilde{J}(\overline{\varphi}_t) - \cfrac{1}{V} \int_M \left( \overline{\varphi}_t - m_t \right) e^{\theta_X} \omega^n_0 - C_1 -C  = \tilde{J}(\overline{\varphi}_t)  - C_2 - C_1 -C \geq - C_2 - C_1 -C. 
\end{align*}
\end{proof}
There exists a constant $C>0$ independant of time such that
\begin{equation}\label{Theta<C}
\Theta:= \dfrac{1}{V} \int_0^\infty \int_M  \Vert \overline{\partial} \cfrac{\partial \varphi_t'}{\partial t} \Vert^2 e^{\theta_X + X(\varphi_t')} \omega_{\varphi_t'}^n \wedge dt < C.
\end{equation}
Indeed, by using Equation \ref{derivemut}, we have
$$
- \Theta= \int_0^{+ \infty} \cfrac{d \tilde{\mu}(\varphi_t')}{d t} dt = \lim_{t \rightarrow + \infty} \tilde{\mu} (\varphi_t') - \tilde{\mu}(\varphi_0') = \lim_{t \rightarrow + \infty} \tilde{\mu} (\varphi_t') 
$$
To conclude, it is sufficient to remark (see Lemma \ref{muphi}) that
$$
\tilde{\mu}(\varphi_t') = \tilde{\mu}(\varphi_t).
$$
and, thanks to Lemma \ref{bornsupmu} and \ref{muphi0}, we have that  
$$
\forall t \in [0, + \infty[, ~~ C \leq \tilde{\mu}(\varphi'_t) \leq 0,
$$
and, by taking the limit, the result is proved. Moreover, we get immediately
$$
\tilde{\Theta}:= \dfrac{1}{V} \int_0^\infty \int_M  \Vert \overline{\partial} \cfrac{\partial \varphi_t'}{\partial t} \Vert^2 e^{\theta_X + X(\varphi_t')-t} \omega_{\varphi_t'}^n \wedge dt < C.
$$
\begin{lemma}
\label{lim0}
If we renormalise the function $h$ by adding a constant such that
$$
\dfrac{1}{V} \int_M \left( h - \theta_X \right) \omega_0^n e^{\theta_X} = - \tilde{\Theta}
$$
then 
$$
\lim_{t \rightarrow + \infty} \int_M \dfrac{\partial \varphi_t'}{\partial t} e^{\theta_X + X(\varphi_t')} \omega^n_{\varphi_t'}=0.
$$
In particular, there exists a constant $C>0$ independant of time $t$ such that
$$
\vert \int_M \dfrac{\partial \varphi_t'}{\partial t} e^{\theta_X + X(\varphi_t')} \omega^n_{\varphi_t'} \vert \leq C.
$$
\end{lemma}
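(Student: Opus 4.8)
The plan is to isolate the single scalar quantity $L(t):=\int_M \frac{\partial\varphi_t'}{\partial t}\,e^{\theta_X+X(\varphi_t')}\,\omega_{\varphi_t'}^n$ and show it satisfies a linear first order ODE whose inhomogeneous term is exactly the integrand defining $\Theta$ and $\tilde\Theta$. Throughout I abbreviate $\psi=\varphi_t'$, $\dot\psi=\partial\varphi_t'/\partial t$, and I write $\Theta'(t):=\frac1V\int_M \|\overline\partial\dot\varphi_t'\|^2\,e^{\theta_X+X(\varphi_t')}\,\omega_{\varphi_t'}^n\ \ge 0$, so that $\int_0^\infty \Theta'(s)\,ds=\Theta<\infty$ and $\int_0^\infty e^{-s}\Theta'(s)\,ds=\tilde\Theta$ (both quantities being independent of the additive normalization of $h$, since changing $h$ by a constant changes $\varphi_t'$ only by a function of $t$). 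Two facts are used repeatedly: by the lemma expressing $\theta_X(\omega_\varphi)=\theta_X+X(\varphi)$, the weight $\theta_X+X(\psi)$ equals $\theta_X(\omega_\psi)$, so $d\mu_\psi:=e^{\theta_X+X(\psi)}\omega_\psi^n$ has total mass $\int_M\omega_\psi^n=V$, constant along the modified flow since $\sigma_t$ is an automorphism; and the drift Laplacian $\Delta_\psi+X$ (with $\Delta_\psi$ the $\omega_\psi$-trace of $\sqrt{-1}\,\partial\overline\partial$) is self-adjoint for $d\mu_\psi$, so $\int_M(\Delta_\psi+X)g\,d\mu_\psi=0$ and $\int_M g(\Delta_\psi+X)g\,d\mu_\psi=-\int_M|\overline\partial g|_\psi^2\,d\mu_\psi$ for every smooth $g$.

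First I would differentiate \eqref{equationphi'} in $t$. Since $\theta_X$ and $h$ are time-independent and $\frac{\partial}{\partial t}\log\omega_\psi^n=\Delta_\psi\dot\psi$, this gives $\ddot\psi=\Delta_\psi\dot\psi+X(\dot\psi)+\dot\psi=(\Delta_\psi+X)\dot\psi+\dot\psi$. Likewise, differentiating the weighted volume, $\frac{\partial}{\partial t}\,d\mu_\psi=\big(X(\dot\psi)+\Delta_\psi\dot\psi\big)d\mu_\psi=(\Delta_\psi+X)\dot\psi\cdot d\mu_\psi$. Combining the two and using the self-adjointness identities,
\[
\frac{dL}{dt}=\int_M\ddot\psi\,d\mu_\psi+\int_M\dot\psi\,(\Delta_\psi+X)\dot\psi\,d\mu_\psi=\int_M\dot\psi\,d\mu_\psi-\int_M|\overline\partial\dot\psi|_\psi^2\,d\mu_\psi=L(t)-V\,\Theta'(t).
\]

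The ODE $L'=L-V\Theta'$ integrates to $L(t)=e^t\big(L(0)-V\int_0^t e^{-s}\Theta'(s)\,ds\big)$. At $t=0$ one has $\varphi_0'=0$, hence by \eqref{equationphi'} $\dot\varphi_0'=\theta_X-h$, so $L(0)=\int_M(\theta_X-h)\,e^{\theta_X}\,\omega_0^n=-\int_M(h-\theta_X)\,e^{\theta_X}\,\omega_0^n$; by the renormalization of $h$ imposed in the statement this equals precisely $V\tilde\Theta=V\int_0^\infty e^{-s}\Theta'(s)\,ds$. Substituting,
\[
L(t)=V\,e^t\!\int_t^\infty e^{-s}\Theta'(s)\,ds=V\!\int_t^\infty e^{t-s}\Theta'(s)\,ds,
\]
so $0\le L(t)\le V\int_t^\infty\Theta'(s)\,ds$ because $\Theta'\ge 0$ and $e^{t-s}\le 1$ for $s\ge t$. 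Since $\int_0^\infty\Theta'(s)\,ds=\Theta<\infty$, the tail $\int_t^\infty\Theta'(s)\,ds\to 0$ as $t\to\infty$, which proves $L(t)\to 0$; the stated boundedness is then immediate.

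The main obstacle is not the ODE but justifying the two structural facts that make $dL/dt$ collapse to $L-V\Theta'$: that $\int_M e^{\theta_X(\omega_\psi)}\omega_\psi^n$ is genuinely constant, equal to $V$, along the modified flow, and that $\Delta_\psi+X$ is self-adjoint with respect to $e^{\theta_X(\omega_\psi)}\omega_\psi^n$, in particular that $\int_M(\Delta_\psi+X)g\,d\mu_\psi=0$. Both rest on the normalization built into the definition of $\theta_X(\omega_\varphi)$ together with the relation $i_X\omega_\psi=\sqrt{-1}\,\overline\partial\theta_X(\omega_\psi)$, which makes $X$ a holomorphic gradient for $\omega_\psi$; these are classical in the theory of Kähler–Ricci solitons but must be invoked carefully here, because it is precisely this normalization that forces the constant appearing in the renormalization of $h$ to match $\tilde\Theta$.
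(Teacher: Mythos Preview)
Your proof is correct and follows essentially the same route as the paper: both derive the ODE $L'(t)=L(t)-V\Theta'(t)$ for $L(t)=\int_M\dot\varphi_t'\,e^{\theta_X+X(\varphi_t')}\omega_{\varphi_t'}^n$, integrate it, use the renormalization of $h$ to identify $L(0)=V\tilde\Theta=V\int_0^\infty e^{-s}\Theta'(s)\,ds$, and conclude $0\le L(t)=V\int_t^\infty e^{t-s}\Theta'(s)\,ds\le V\int_t^\infty\Theta'(s)\,ds\to 0$. Your version is somewhat more explicit than the paper's in deriving the ODE (you compute $\ddot\psi=(\Delta_\psi+X)\dot\psi+\dot\psi$ and invoke self-adjointness of the drift Laplacian for $d\mu_\psi$, whereas the paper simply asserts $da_t/dt=a_t-\int_M\|\overline\partial\dot\varphi_t'\|^2\,d\mu_{\varphi_t'}$), but the argument is the same.
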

\begin{proof}
The proof on is inspired by Lemma $4.2$ of \cite{TZx} but  this lemma assume the existence of a Kähler-Ricci soliton. We set
$$
a_t:=  \int_M \dfrac{\partial \varphi_t'}{\partial t} e^{\theta_X + X(\varphi_t')} \omega^n_{\varphi_t'}.
$$
We must prove $\lim_{t \rightarrow + \infty} a_t =0$. First, we have
\begin{align*}
\dfrac{d a_t}{dt} &= a_t - \int_M \Vert \overline{\partial} \cfrac{\partial \varphi_t'}{\partial t} \Vert^2 e^{\theta_X + X(\varphi_t')} \omega^n_{\varphi_t'},
\end{align*}
and hence
\begin{equation}\label{ddteat}
\dfrac{d (e^{-t} a_t)}{dt} = -\int_M e^{-t} \Vert \overline{\partial} \cfrac{\partial \varphi_t'}{\partial t} \Vert^2 e^{\theta_X + X(\varphi_t')} \omega^n_{\varphi_t'}.
\end{equation}
Moreover, thanks to Equation \eqref{equationphi'}, we get
$$
h- \theta_X = - \cfrac{\partial \varphi_t'}{\partial t} \vert_{t=0},
$$
hence we obtain
\begin{equation}\label{a0}
a_0 = - \int_M \left( h - \theta_X \right) \, e^{\theta_X} \, \omega_0^n ,
\end{equation}
and, by using the hypothesis of renormalisation,
$$
a_0 = \int_0^\infty \int_M  \Vert \overline{\partial} \cfrac{\partial \varphi_t'}{\partial t} \Vert^2 e^{\theta_X + X(\varphi_t')-t} \omega_{\varphi_t'}^n \wedge dt. 
$$
Now, we remark, by using Equation \eqref{ddteat},
\begin{align*}
a_0 - e^{-t} \, a_t & = e^{0} \, a_0 - e^{-t} \, a_t  - \int_0^t \dfrac{d  (e^{-s}a_s)}{ds} ds =  \int_0^t \, \int_M e^{-s} \Vert \overline{\partial} \cfrac{\partial \varphi_s'}{\partial s} \Vert^2 e^{\theta_X + X(\varphi_s')} \omega^n_{\varphi_s'} ds \end{align*}
And we obtain, by using Equation \eqref{a0},
\begin{align*}
a_t &= e^{t} \left[ a_0 + \int_0^t \int_M e^{-s} \Vert \overline{\partial} \cfrac{\partial \varphi_s'}{\partial s} \Vert^2 e^{\theta_X + X(\varphi_s')} \omega^n_{\varphi_s'} ds \right] \\
&=e^{t} \left[ \int_0^{+ \infty} \int_M e^{-s} \Vert \overline{\partial} \cfrac{\partial \varphi_s'}{\partial s} \Vert^2 e^{\theta_X + X(\varphi_s')} \omega^n_{\varphi_s'} ds - \int_0^t \int_M e^{-s} \Vert \overline{\partial} \cfrac{\partial \varphi_s'}{\partial s} \Vert^2 e^{\theta_X + X(\varphi_s')} \omega^n_{\varphi_s'} ds \right] \\
&= e^{t} \left[ \int_t^{+ \infty} \int_M e^{-s} \Vert \overline{\partial} \cfrac{\partial \varphi_s'}{\partial s} \Vert^2 e^{\theta_X + X(\varphi_s')} \omega^n_{\varphi_s'} ds \right] = \int_t^{+ \infty} \int_M e^{t-s} \Vert \overline{\partial} \cfrac{\partial \varphi_s'}{\partial s} \Vert^2 e^{\theta_X + X(\varphi_s')} \omega^n_{\varphi_s'} ds. \\
\end{align*}
To conclude, it is sufficient to remark that
$$
0 \leq a_t  \leq \int_t^{+ \infty} \left( \int_M \Vert \overline{\partial} \cfrac{\partial \varphi_s'}{\partial s} \Vert^2 e^{\theta_X + X(\varphi_s')} \omega^n_{\varphi_s'} \right) ds \, \rightarrow 0 \text{ quand } t \rightarrow +\infty,
$$
because the integral $\leq \int_0^{+ \infty}  ( \int_M \Vert \overline{\partial} \cfrac{\partial \varphi_s'}{\partial s} \Vert^2 e^{\theta_X + X(\varphi_s')} \omega^n_{\varphi_s'}) ds$ is convergent (see Equation \ref{Theta<C}).
\end{proof}

\begin{proposition}\label{c_t<C}
If we renormalize the function $h_t$ by adding a constant such that
$$
\dfrac{1}{V} \int_M \left( h - \theta_X \right) \omega_0^n e^{\theta_X} = - \tilde{\Theta}.
$$
then there exists two constant $C>0$ and $\tilde{C}>0$ such that
\begin{equation}
\vert c_t \vert \leq C \text{ et } \vert \frac{\partial \varphi_t}{ \partial t} \vert \leq \tilde{C}
\end{equation}
\end{proposition}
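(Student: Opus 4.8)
The plan is to reduce the statement to a uniform bound on $c_t$. Since Lemma \ref{Perelman} gives $h_t=-\dot\varphi_t+c_t$ with $|h_t|\le A$, any bound $|c_t|\le C$ yields $|\dot\varphi_t|\le|c_t|+A$, so it is enough to bound $c_t$. To do this I would compare the plain flow $(\varphi_t)$ with the modified flow $(\varphi_t')$ of Equation \eqref{equationphi'}, exploiting the rigidity carried by the term $\varphi_t'$ there. First I would record how the time derivatives of the two flows are related. Differentiating $\omega_{\varphi_t'}=\sigma_t^*\omega_{\varphi_t}$ in $t$ and using $\LLL_X\omega=\sqrt{-1}\,\partial\overline\partial\,\theta_X(\omega)$ (Equation \eqref{thetaXLX} and the lemma following it) gives $\dot\varphi_t'=(\dot\varphi_t)\circ\sigma_t+\theta_X(\omega_{\varphi_t'})+b_t$ for a time-dependent constant $b_t$; on the other hand, writing the modified flow in terms of the Ricci potential $h(\omega_{\varphi_t'})$ of $\omega_{\varphi_t'}$ gives $\dot\varphi_t'=-h(\omega_{\varphi_t'})+\theta_X(\omega_{\varphi_t'})+c_t'$ for a constant $c_t'$. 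Pulling back $\operatorname{Ric}(\omega_{\varphi_t})-\omega_{\varphi_t}=\sqrt{-1}\,\partial\overline\partial\,h_t$ by $\sigma_t$ and checking normalisations gives $h(\omega_{\varphi_t'})=h_t\circ\sigma_t$, and combining the three relations with $\dot\varphi_t=-h_t+c_t$ yields $c_t=c_t'-b_t$.

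The bound on $c_t'$ comes from Lemma \ref{lim0}. Integrating $\dot\varphi_t'=-h(\omega_{\varphi_t'})+\theta_X(\omega_{\varphi_t'})+c_t'$ against the measure $e^{\theta_X(\omega_{\varphi_t'})}\omega_{\varphi_t'}^n$, whose total mass is $V$ for every $t$ (because $\int_M e^{\theta_X(\omega_\psi)}\omega_\psi^n=\int_M\omega_\psi^n$), turns the left-hand side into $a_t=\int_M\dot\varphi_t'\,e^{\theta_X(\omega_{\varphi_t'})}\omega_{\varphi_t'}^n$, which is uniformly bounded by Lemma \ref{lim0} under the present renormalisation of $h$. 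On the right-hand side the $h(\omega_{\varphi_t'})$-term is bounded by $AV$ via Lemma \ref{Perelman}, and $\sup_M|\theta_X(\omega_{\varphi_t'})|=\sup_M|\theta_X(\omega_{\varphi_t})|$ is controlled by the fixed polytope $2\Delta$: indeed $\nabla u_t$ takes values in $2\Delta$ by Proposition \ref{potconv}, and on $P/H$ the function $\theta_X(\omega_{\varphi_t})$ equals $(\nabla u_t,\xi)$ up to an additive constant, hence stays bounded independently of $t$. What survives is $c_t'V$, so $|c_t'|\le C$.

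It then remains to bound the constant $b_t=c_t'-c_t$. Comparing Equations \eqref{NKRFpop} and \eqref{equationphi'} one gets $b_t=\log\big((\sigma_t^*\omega_0)^n/\omega_0^n\big)+\rho_t+h\circ\sigma_t-h$, where $\rho_t$ is a potential of $\sigma_t^*\omega_0$ with respect to $\omega_0$; that this is constant in $x$ follows from $\operatorname{Ric}(\sigma_t^*\omega_0)=\sigma_t^*\operatorname{Ric}(\omega_0)$. Although $\sigma_t$ leaves every compact subset of $G$, the form $\sigma_t^*\omega_0$ remains in $2\pi c_1(X)$ and invariant under the maximal torus of $K$, so its convex potential stays anchored to the same polytope $2\Delta$ (Proposition \ref{potconv}); carrying this out, the linear growths of $\rho_t$ and of $\log\big((\sigma_t^*\omega_0)^n/\omega_0^n\big)$ cancel and $|b_t|\le C$. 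Combining with the previous paragraph, $|c_t|\le C$, and therefore $|\dot\varphi_t|\le|c_t|+A=:\tilde C$.

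I expect the last step to be the main obstacle. The bound on $c_t'$ is essentially a single integration once Lemma \ref{lim0} is available, and it is there that the renormalisation of $h$ in the hypothesis does its work; the bound on $b_t$, by contrast, requires a genuine analysis of the $\sigma_t$-orbit of $\omega_0$, where one must show that the two separately unbounded contributions to $b_t$ cancel, using the rigidity of the fixed Kähler class — in the horospherical picture, of the fixed moment polytope $2\Delta$.
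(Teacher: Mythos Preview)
Your decomposition $c_t=c_t'-b_t$ is an unnecessary detour, and the step you flag as ``the main obstacle'' --- bounding $b_t$ via a cancellation of linear growths along the $\sigma_t$--orbit of $\omega_0$ --- is not carried out and does not have an obvious completion. The sketch ``the linear growths of $\rho_t$ and of $\log\big((\sigma_t^*\omega_0)^n/\omega_0^n\big)$ cancel'' is precisely the nontrivial point, and Proposition~\ref{potconv} alone does not give it: the potential $\rho_t$ is only determined up to an additive constant in $t$, so the putative cancellation depends on a normalisation you have not fixed.

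The paper's argument avoids $b_t$ entirely. Once $\varphi_t'$ is chosen so that \eqref{equationphi'} holds \emph{exactly} (this is the normalisation made just before \eqref{equationphi'}), one has the pointwise relation
\[
\dot\varphi_t'=\big(\dot\varphi_t\big)\circ\sigma_t+\theta_X+X(\varphi_t')
\]
with no extra constant. Integrating this against $e^{\theta_X+X(\varphi_t')}\omega_{\varphi_t'}^n$ and invoking Lemma~\ref{lim0} bounds the left side; on the right, $\theta_X$ is bounded because it is continuous on a compact manifold, and $|X(\varphi_t')|$ is uniformly bounded for all $\varphi_t'\in\mathcal{M}_X(\omega_0)$ by a general fact (Corollary~5.3 in \cite{Zh}; your polytope argument is the horospherical incarnation of the same bound). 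A change of variables by $\sigma_t$ turns $\int_M(\dot\varphi_t\circ\sigma_t)\,\omega_{\varphi_t'}^n$ into $\int_M\dot\varphi_t\,\omega_{\varphi_t}^n$, and then $\dot\varphi_t=-h_t+c_t$, $|h_t|\le A$, and $\int_M\omega_{\varphi_t}^n=V$ give $|c_t|\le C$ directly. In other words, the whole argument is the single integration you already perform to bound $c_t'$; your $c_t'$ \emph{is} $c_t$ once $\varphi_t'$ is normalised correctly, and there is no second constant to chase.
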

\begin{proof}
If $(\varphi_t')_{t \in [0,+\infty[}$ is a solution of \eqref{equationphi'} then 
$$
\dfrac{\partial \varphi_t'}{\partial t} = \dfrac{\partial \varphi_t}{\partial t} + \theta_X + X(\varphi_t').
$$
Hence, by Lemma \ref{lim0}, there exist a uniform constant $C>0$ such that
$$
\vert \int_M \left( \dfrac{\partial \varphi_t}{\partial t} + \theta_X + X(\varphi_t') \right) e^{\theta_X + X(\varphi_t')} \omega^n_{\varphi_t'}  \vert \leq C.
$$
Moreover, we know that there exists a uniform bound for $ \vert X(\psi) \vert$ for all $\psi \in \mathcal{M}_X(\omega_0)$ (see Corollary 5.3 in \cite{Zh}) and the function $\theta_X$ is bounded so there exists a constant $C_1>0$  independent of time $t$ such that
$$
\vert \int_M - \dfrac{\partial \varphi_t}{\partial t} \, \omega^n_{\varphi_t'}  \vert \leq C_1.
$$
but $h_t = - \partial \varphi_t / \partial t + c_t$ (see Lemma \ref{Perelman}) so
$
\vert \int_M \left( h_t - c_t \right) \, \omega^n_{\varphi_t'}  \vert \leq C_1.
$
Finally, we get
$$
 \vert c_t \vert \, \vert \int_M  \omega_{\varphi_t'}^n \vert \leq C_1 + \vert \int_M h_t \omega_{\varphi_t'}^n \vert.
$$
Now, thanks to Equation \eqref{Vol} and by definition of $\varphi_t'$, we have
$
\operatorname{Vol}(M,\omega_{\varphi_t'})=V,
$
hence, by Lemma \ref{Perelman}, 
$$
\vert c_t \vert \leq \dfrac{C_1}{V} + A,
$$
and
$$
\vert \cfrac{\partial \varphi}{\partial t} \vert \leq \cfrac{C_1}{V} + 2 A.
$$
\end{proof}
\subsection{Modified Kähler-Ricci Flow}
In this section, the points $x_t$ are modified to points $x_t'$ in order to verify larger smoothness assumptions and Equation \eqref{equationphi'} is modified accordingly. Finally we will then study the convergence of the Kähler-Ricci flow thus modified.

We start by proving the following lemma that will allow us to modify the $x_t$ family later.
\begin{lemma}\label{xtdiff}
We have
$$
\exists C>0,~~ \forall (t,t') \in \RR^2 ~~ \vert t - t' \vert \leq 1  \Rightarrow \vert x_t - x_t' \vert \leq C.
$$
\end{lemma}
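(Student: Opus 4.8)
The plan is to compare the two convex potentials $u_t$ and $u_{t'}$ directly and then combine this with a uniform (in $t$) linear growth estimate for $\overline{u}_t$ measured from its minimum point $x_t$. First I would collect the time-independent bounds already at our disposal: $\vert c_t\vert\le C$ and $\vert\partial\varphi_t/\partial t\vert\le\tilde C$ from Proposition \ref{c_t<C}, the two-sided control $\vert m_t\vert\le C$ coming from Lemma \ref{mt<C} (and the lower bound established in its proof), and the $\C^0$-estimate $\Vert\overline{\varphi}_t\Vert_{\C^0(M)}\le C$ from Proposition \ref{estimeC^0}. Since on $P/H$ one has $u_t(x)=u_0(x)+\varphi_t\vert_{P/H}(\exp(x)H)$, the derivative $\partial u_t/\partial t$ is exactly $\partial\varphi_t/\partial t$ restricted to $P/H$, hence $\vert\partial u_t(x)/\partial t\vert\le\tilde C$ uniformly in $x\in\aaa_1$; integrating in $t$ gives $\vert u_t(x)-u_{t'}(x)\vert\le\tilde C$ whenever $\vert t-t'\vert\le 1$, and subtracting the constants $c_t,c_{t'}$ (bounded by Proposition \ref{c_t<C}) yields a uniform bound $\vert\overline{u}_t(x)-\overline{u}_{t'}(x)\vert\le C_1$ valid for all $x\in\aaa_1$ and all $t,t'$ with $\vert t-t'\vert\le 1$.

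The crucial ingredient is a uniform linear lower bound: there are $\delta>0$ and a constant $C_2$, both independent of $t$, with $\overline{u}_t(y)\ge m_t+\delta\,\vert y-x_t\vert-C_2$ for every $y\in\aaa_1$. To obtain this I would use that, by \eqref{ubb} and \eqref{phibar}, $\overline{u}_t(x+x_t)-m_t=u_0(x)+\overline{\varphi}_t(\exp(x)H)$ on $\aaa_1$, so the $\C^0$-estimate on $\overline{\varphi}_t$ keeps $\overline{u}_t(\cdot+x_t)-m_t$ within a fixed distance of $u_0$. Proposition \ref{potconv}, applied to the fixed metric $\omega_0$, says $u_0-v_{2\Delta}$ is bounded on $\aaa_1$; and since $0\in\operatorname{Int}(\Delta)$ by \eqref{0inD}, the polytope $2\Delta$ contains a ball of some radius $\delta>0$ about $0$, so $v_{2\Delta}(x)\ge\delta\,\vert x\vert$. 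Chaining these three facts gives the asserted growth estimate with a time-independent constant.

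Finally I would combine the two displays. Let $\vert t-t'\vert\le 1$. Evaluating the closeness bound at $x=x_{t'}$ and using that $\overline{u}_{t'}$ attains its minimum $m_{t'}$ at $x_{t'}$ gives $\overline{u}_t(x_{t'})\le m_{t'}+C_1$; the growth estimate gives $\overline{u}_t(x_{t'})\ge m_t+\delta\,\vert x_{t'}-x_t\vert-C_2$. Hence $\delta\,\vert x_{t'}-x_t\vert\le\vert m_{t'}-m_t\vert+C_1+C_2$, and the right-hand side is bounded uniformly in $t,t'$ because $m_t$ is bounded. This is the claimed inequality.

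The main obstacle is the linear growth step: two convex functions that stay uniformly close can have arbitrarily far-apart minima, so the boundedness of $\vert\overline{u}_t-\overline{u}_{t'}\vert$ alone is useless — one genuinely needs the uniform coercivity of $\overline{u}_t$ from $x_t$, and that rests on the nontrivial $\C^0$-estimate of $\overline{\varphi}_t$ (Proposition \ref{estimeC^0}) together with the a priori description of $\operatorname{im}(du)=2\Delta$ in Proposition \ref{potconv}. Once those are granted, the remainder is elementary convexity and bookkeeping of constants.
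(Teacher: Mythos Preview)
Your proof is correct and follows essentially the same route as the paper: both arguments combine the bound $\vert\overline{u}_t-\overline{u}_{t'}\vert\le C_1$ (from Proposition~\ref{c_t<C} and Lemma~\ref{mt<C}) with the uniform proximity $\vert\overline{\overline{u}}_t-v_{2\Delta}\vert\le C_0$ (from Proposition~\ref{estimeC^0} and Proposition~\ref{potconv}) and the fact that $B(0,\delta)\subset 2\Delta$. The only cosmetic difference is that you package the last two ingredients as a uniform coercivity estimate $\overline{u}_t(y)\ge m_t+\delta\vert y-x_t\vert-C_2$ and then evaluate at $y=x_{t'}$, whereas the paper keeps the support function explicit and evaluates $v_{2\Delta}$ at $x_t-x_{t'}$; the two computations are equivalent.
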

\begin{proof}
By Propositions \ref{potconv} and \ref{estimeC^0}, we have
\begin{align*}
\vert \overline{\overline{u}}_t - v_{2 \Delta} \vert  \leq \vert \overline{\overline{u}}_t - u_0 \vert  + \vert u_0 - v_{2 \Delta} \vert \leq \vert \overline{\varphi}_t \vert + \vert u_0 - v_{2 \Delta} \vert \leq C_0 
\end{align*}
We get also
$$
\vert \overline{\overline{u}}_{t'} - v_{2 \Delta} \vert  \leq C_0.
$$
Moreover, by using Lemma \ref{mt<C} and Proposition \ref{c_t<C}, we have
\begin{align*}
&\vert \overline{\overline{u}}_t(x - x_t) - \overline{\overline{u}}_t(x - x_{t'}) \vert 
= \vert \overline{u}_t(x - x_t) - \overline{u}_t(x - x_{t'})  -m_t +m_{t'} \vert \\
&= \vert u_t(x)- u_{t'}(x) -c_t + c_{t'} -m_t +m_{t'} \vert 
= \vert \varphi_t(x) - \varphi_{t'}(x) -c_t + c_{t'} -m_t + m_{t'}\vert \\
&\leq \vert \varphi_t (x)- \varphi_{t'}(x) \vert + \vert c_t \vert + \vert c_{t'} \vert + \vert m_t \vert + \vert m_{t'} \vert \leq \vert \varphi_t - \varphi_{t'} \vert(x) + \vert c_t \vert + \vert c_{t'} \vert +2 \, C_1 \\
& \leq \vert \varphi_t (x) - \varphi_{t'}(x) \vert + 2 \, C_2 +2 \, C_1 \leq C_3\vert t -t' \vert + 2 \, C_2 +2 \, C_1\leq C_4:= C_3 + 2 \,C_2 +2 \, C_1 
\end{align*}

By using the previous inegalities, we have
$$
\vert v_{2 \Delta}(x-x_t) -v_{2 \Delta}(x-x_{t'}) \vert \leq C_5:= 2 \, C_0 + C_4.
$$
If we take $x=x_t$ then
$$
\vert v_{2 \Delta}(x_t-x_{t'}) \vert = \vert v_{2 \Delta}(0)  -  v_{2 \Delta}(x_t-x_{t'}) \vert \leq C_5.
$$
This means that
$$
( x_t - x_{t'} , p ) \leq C_5, ~~\forall p \in 2 \Delta.
$$
To conclude, it is enough to remark that $2 \Delta$ contains a ball of the form $B(0, \delta)$ with $ \delta > 0$. This is due to the fact that we have an isomorphism (given by $\nabla u$) between $\aaa_1 \simeq \RR^r$ and $2 \Delta$ (see Proposition \ref{potconv}). Taking as point $p$ the intersection point between the line $\RR \, (x_t - x_{t'})$ and the ball $B(0,\delta)$, we therefore obtain thanks to the case of equality of the Cauchy-Schwarz theorem:
$$
\delta \, \vert x_t - x_{t'} \vert  = \vert  x_t - x_{t'} \vert \,   \vert p \vert  = ( x_t - x_{t'} , p  ) \leq C_5,
$$
hence
$$
\vert x_t - x_{t'} \vert \leq C:= C_5/\delta.
$$
\end{proof}
Now, we want to build a family $x_{t}'$ for $t \in [0,+\infty[$ such that there exists two constants $D$ et $\tilde{D}$ independent of time $t$ satisfying 
\begin{equation}\label{condxt'}
\vert x_t - x_{t}' \vert \leq D \text{  and  } \vert \dfrac{d x_{t}'}{dt} \vert \leq \tilde{D}.
\end{equation}
In order to realize it, we consider the following family
\begin{equation*}
  \left\{
      \begin{aligned}
       x_i'&:= x_i & \text{si $t= i \in \NN $}\\
       x_t'&:= (1- \epsilon ) \, x_i + \epsilon \, x_{i+1} &  \text{si $t=i + \epsilon $ avec $i \in \NN$ et $\epsilon \in ]0,1[$}.  
      \end{aligned}
    \right.
\end{equation*}
We remark
\begin{align*}
\vert x_t - x_{t}' \vert = 0 \text{ si } t \in \NN,
\end{align*}
and if $t$ is written $t=i + \epsilon $ with $i \in \NN$ and $\epsilon \in ]0,1[$ then, by using Lemma \ref{xtdiff},
\begin{align*}
\vert x_t - x_{t}' \vert &= \vert x_t - x_i + x_i -x_{t}' \vert  \leq \vert x_t - x_i \vert + \vert  x_i -x_{t}' \vert \\
& \leq C +   \vert  x_i -x_{t}' \vert  \leq C +   \epsilon \vert  x_i -x_{i+1} \vert \leq 2 \, C =: D.  \\
\end{align*}
Moreover, we have
$$
\vert \dfrac{d x_{t}'}{dt} \vert \leq \tilde{D}~~ \forall t \in \RR^+ \backslash \NN^*,
$$
this is a direct result of Lemma \ref{xtdiff} and that the function $x_{t'}$ is defined per piece on the intervals $[i,i+1]$.  So we get $\tilde{D}=C$ where $C$ is the constant of Lemma \ref{xtdiff}. To conclude, it is necessary to modify the family $x_t'$ to obtain a $\C^1$-interpolation at integers points. In order to do this, we remark, thanks to Lemma \ref{xtdiff}, that 
$$
\exists 1/4 > \delta>0,~~ \forall i \in \NN^*,~~ \forall t \in [i- \delta, i + \delta] ,~~  x_t' \in B(x_i',1/2).
$$
We define the polynomial function $P_i=(P^1_i, \cdots, P^r_i)$ where the function $P^j$ is defined by
\begin{align*}
P^j_i(t)&:= \left( -2 \left[ x_{i+ \delta}' - x_{i - \delta}' \right] + 2 \delta \, \left[\cfrac{dx_t'}{dt} \vert_{t =i- \delta} + \cfrac{dx_t'}{dt} \vert_{t =i + \delta} \right]  \right)_j \, t^3 \\
&+ \left( 3 \left[ x_{i+ \delta}' - x_{i - \delta}' \right] - 2 \delta \, \left[ 2 \, \cfrac{dx_t'}{dt} \vert_{t =i- \delta} + \cfrac{dx_t'}{dt} \vert_{t =i + \delta} \right] \right)_j \, t^2  \\
&+ \left( \cfrac{dx_t'}{dt} \vert_{t =i - \delta} \right)_j \, t +  \left( x_{i - \delta} \right)_j,
\end{align*}
where $\left( \cdot \right)_j$ refers to the $j$-th coordinate of the vector in the paranthesis. We note
\begin{equation*}
  \left\{
      \begin{aligned}
     P_i(0) &=x_{i- \delta}'\\
      P_i(1) &=x_{i+ \delta}' \\
      P_i'(0) &= 2 \delta \, \cfrac{dx_t'}{dt} \vert_{t =i- \delta} \\
      P_i'(1) &= 2 \delta \, \cfrac{dx_t'}{dt} \vert_{t =i+ \delta}. \\
      \end{aligned}
    \right.
    \end{equation*}
So we define the function $f_i(t) : [i- \delta, i + \delta]\rightarrow \aaa_1$ by
$$
f_i(t):= P_i \left( \dfrac{t - (i - \delta)}{2 \delta}\right),~~ \forall i \in \NN^*.
$$
We see that
\begin{equation*}
  \left\{
      \begin{aligned}
     f_i(i- \delta) &=x_{i- \delta}'\\
      f_i(i+ \delta) &=x_{i+ \delta}' \\
      f_i'(i- \delta) &= \cfrac{dx_t'}{dt} \vert_{t =i- \delta} \\
      f_i'(i+ \delta) &= \cfrac{dx_t'}{dt} \vert_{t =i+ \delta}. \\
      \end{aligned}
    \right.
    \end{equation*}
In addition, by choosing $\delta$, we have $\vert x_{i- \delta} - x_{i + \delta} \vert \leq 1$ and recall that the function $dx_t'/dt$ is uniformly bounded to non-integer points, we therefore obtain by that there are two constants $A$ and $\tilde{A}$ independent of $i$ and $t$ such that
\begin{equation*}
  \left\{
      \begin{aligned}
       \vert x_{t} - f_i(t) \vert &\leq A  & ~~ \forall t \in [i - \delta , i + \delta] \\
       \vert f_i'(t) \vert & \leq  \tilde{A}   & ~~ \forall t \in [i - \delta , i + \delta].
      \end{aligned}
    \right.
    \end{equation*}
To conclude, it is therefore sufficient to replace for any $t \in[i- \delta, i+ \delta]$ the $x_t'$ function by the $f_i(t)$ function. We then have the desired result.

We set
\begin{equation*}
  \left\{
      \begin{aligned}
      \tilde{u}_t:= u_t(x_t' + \cdot) \\
     \tilde{\varphi}_t := \tilde{u}_t - u_0. \\
      \end{aligned}
    \right.
\end{equation*}
Let us start by noting that the function $\tilde{\varphi}$ extends by density to a global function on $M$ which is the potential of $\exp(x_t)^* \omega_\varphi$ with respect to $\omega_0$ so $\omega_{\tilde{\varphi}}$ is a Kähler form. In addition, we have
$$
\vert \tilde{u} - \overline{\overline{u}} \vert \leq d_0 \vert x_t - x_{t}' \vert + m_t + c_t \leq C,
$$
and so
\begin{align*}
\Vert \tilde{\varphi}_t \Vert_{\C^0} &= \Vert \tilde{u}_t -u_0 \Vert_{\C^0} \\
&= \Vert \tilde{u}_t - \overline{\overline{u}}_t + \overline{\overline{u}}_t -u_0 \Vert_{\C^0} \\
&= \Vert \tilde{u}_t - \overline{\overline{u}}_t \Vert_{\C^0} + \Vert \overline{\overline{u}}_t -u_0 \Vert_{\C^0} \\
& \leq C
\end{align*}
Now, we notice that $\tilde{u}_t$ satisfies the following equation on $\aaa_1$ :
$$
\dfrac{\partial \tilde{u}_t}{ \partial t} = \log \det((\tilde{u}_t)_{ij}) + \tilde{X}(\tilde{u}_t) + \tilde{u}_t + \sum_{\alpha \in \Phi^+_P} \log \langle \alpha, \nabla \tilde{u}_t + 4 \tau_{\rho_P} \rangle,
$$
and by density, we then show that $\tilde{\varphi}$ satisfies the following equation on $M$ :
\begin{equation}\label{equationvarphitilde}
\dfrac{\partial \tilde{\varphi}_t}{\partial t} = \log \det((g_0)_{i \overline{j}} +(\tilde{\varphi}_t)_{i \overline{j}}) - \log \det((g_0)_{i \overline{j}}) + \tilde{X}_t(\tilde{\varphi}_t) - h + \theta_{\tilde{X}_t} +\tilde{\varphi}_t,
\end{equation}
où $\tilde{X}_t:=dx_t'/dt$ is the holomorphic vector field given by
$
\tilde{X}_t: x \in M \mapsto \partial/\partial s \vert_{s=0} \, \exp( s \, d x'_t/dt ) \cdot x .
$
Equation \eqref{equationvarphitilde} is the \textit{Kähler-Ricci flow modified by  $\tilde{X}_t$}. In particular, it can be written in the form
\begin{equation}\label{KRFM}
\cfrac{\partial \omega_{\tilde{\varphi}} }{\partial t} = - Ric(\omega_{\tilde{\varphi}_t}) + \omega_{\tilde{\varphi}_t} + \LLL_{\tilde{X}_t} \omega_{\tilde{\varphi}_t}.
\end{equation}

Let us finish this section, by noting that $\theta_{\tilde{X}}=\tilde{X}(u_0)$, we get, by Proposition \ref{c_t<C}, that there exists a constant $C>0$ independent of $t$ such that
$$
\vert \cfrac{\partial \tilde{\varphi}_t}{\partial t} \vert = \vert \cfrac{\partial \varphi_t}{\partial t} + \theta_{\tilde{X}_t} \vert \leq C.
$$

\subsubsection{Proof of the convergence}

We can now prove of the main theorem. Before we will prove the following result:
\begin{theorem}\label{cvss}
There exists a sequence of positives reals $(t_i)_{i \in \NN}$ such that $t_i \rightarrow_{i \rightarrow + \infty} + \infty$ and the subsequence $\tilde{\varphi}_{t_i}$, extracted to the solution $\tilde{\varphi}_t$ of the Käher Ricci flow modified by $X_t$ (Equation \eqref{KRFM}), converge to a potential $\overline{\varphi}_\infty$ such that $(\omega_0 + \sqrt{-1}/2\pi \, \partial \overline{\partial} \overline{\varphi}_{\infty}, X)$ is a Kähler-Ricci soliton where $X$ is the solitonic vector field defined by Proposition \ref{formX}.
\end{theorem}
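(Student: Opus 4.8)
\emph{Proof strategy.} The plan is to run a two-step argument: a compactness step producing a smooth limit of a subsequence $(\tilde\varphi_{t_i})$, and an identification step showing this limit satisfies the soliton equation. For the compactness step, recall that the previous subsection (building on Proposition~\ref{estimeC^0}) yields $\|\tilde\varphi_t\|_{\C^0(M)}\leq C$ and $|\partial\tilde\varphi_t/\partial t|\leq C$, both uniform in $t$. Rewriting Equation~\eqref{equationvarphitilde} as the complex Monge--Ampère equation
$$
\det\!\big((g_0)_{i\overline j}+(\tilde\varphi_t)_{i\overline j}\big)=\det\!\big((g_0)_{i\overline j}\big)\,\exp\!\Big[\tfrac{\partial\tilde\varphi_t}{\partial t}-\tilde X_t(\tilde\varphi_t)-\theta_{\tilde X_t}-\tilde\varphi_t+h\Big],
$$
I claim its right-hand side is bounded above and below uniformly in $t$. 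Indeed $\partial\tilde\varphi_t/\partial t$, $\tilde\varphi_t$ and $h$ are uniformly bounded, and the first-order term is benign because $\tilde X_t(\tilde\varphi_t)+\theta_{\tilde X_t}=\tilde X_t(\tilde u_t)$ with $\nabla\tilde u_t(x)=\nabla u_t(x_t'+x)$ taking values in the fixed bounded polytope $2\Delta$ (Proposition~\ref{potconv}), while $|\,dx_t'/dt\,|$ is bounded by construction of the family $(x_t')$. With the right-hand side under control, Yau's second-order estimate bounds $n+\Delta_{\omega_0}\tilde\varphi_t$, so $\omega_{\tilde\varphi_t}$ remains uniformly equivalent to $\omega_0$; complex Evans--Krylov together with parabolic Schauder bootstrapping (the bound on $\partial\tilde\varphi_t/\partial t$ feeding the time derivatives) then give uniform $\C^k(M)$-bounds for every $k$. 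By Arzelà--Ascoli one extracts $t_i\to+\infty$ with $\tilde\varphi_{t_i}\to\overline\varphi_\infty$ in $\C^\infty(M)$, and by the uniform equivalence of metrics the form $\omega_\infty:=\omega_0+\tfrac{\sqrt{-1}}{2\pi}\partial\overline\partial\,\overline\varphi_\infty$ is Kähler (and $K$-invariant, hence $\overline\varphi_\infty\in\mathcal M_X(\omega_0)$).

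For the identification step the plan is to feed the energy estimate~\eqref{Theta<C} into the $\tilde\varphi$-flow. Using Equation~\eqref{equationphi'} together with $\theta_X(\omega_{\varphi_t'})=\theta_X+X(\varphi_t')$, one checks that $\partial\varphi_t'/\partial t$ differs by a constant from $-\big(h_{\omega_{\varphi_t'}}-\theta_X(\omega_{\varphi_t'})\big)$, so the integrand of $\Theta$ equals $\|\overline\partial\big(h_{\omega_{\varphi_t'}}-\theta_X(\omega_{\varphi_t'})\big)\|_{\omega_{\varphi_t'}}^2\,e^{\theta_X(\omega_{\varphi_t'})}\,\omega_{\varphi_t'}^n$. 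Now $\omega_{\varphi_t'}$ and $\omega_{\tilde\varphi_t}$ are pull-backs of $\omega_{\varphi_t}$ by elements of the abelian group $\operatorname{Aut}_G(M)$; since all of these commute with the flow of $X\in\zzz(\ggg)$ and the normalisation $\int_M e^{\theta_X(\cdot)}(\cdot)^n=V$ is preserved, one has $\theta_X(\Psi^*\omega)=\theta_X(\omega)\circ\Psi$ and $h_{\Psi^*\omega}=h_\omega\circ\Psi+\mathrm{const}$, so the integral above is unchanged upon replacing $\varphi_t'$ by $\tilde\varphi_t$. Hence
$$
\frac1V\int_0^{+\infty}\!\!\int_M\big\|\overline\partial\big(h_{\omega_{\tilde\varphi_t}}-\theta_X(\omega_{\tilde\varphi_t})\big)\big\|_{\omega_{\tilde\varphi_t}}^2\,e^{\theta_X(\omega_{\tilde\varphi_t})}\,\omega_{\tilde\varphi_t}^n\wedge dt=\Theta<\infty .
$$

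Consequently, after passing to a further subsequence (still written $t_i$), the inner integral at time $t_i$ tends to $0$. Since $\tilde\varphi_{t_i}\to\overline\varphi_\infty$ in $\C^\infty(M)$ and the metrics stay uniformly positive, $\omega_{\tilde\varphi_{t_i}}\to\omega_\infty$, $h_{\omega_{\tilde\varphi_{t_i}}}\to h_{\omega_\infty}$ and $\theta_X(\omega_{\tilde\varphi_{t_i}})\to\theta_X(\omega_\infty)$, all in $\C^\infty(M)$, so passing to the limit yields $\int_M\|\overline\partial\big(h_{\omega_\infty}-\theta_X(\omega_\infty)\big)\|_{\omega_\infty}^2\,e^{\theta_X(\omega_\infty)}\,\omega_\infty^n=0$. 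As $e^{\theta_X(\omega_\infty)}\omega_\infty^n$ is a positive measure, $h_{\omega_\infty}-\theta_X(\omega_\infty)$ is constant, i.e. $\sqrt{-1}\partial\overline\partial h_{\omega_\infty}=\sqrt{-1}\partial\overline\partial\theta_X(\omega_\infty)$, which is precisely $\Ric(\omega_\infty)-\omega_\infty=\LLL_X\omega_\infty$. Thus $(\omega_\infty,X)$ is a Kähler--Ricci soliton with solitonic vector field $X$ from Proposition~\ref{formX}, as asserted. The step I expect to be the main obstacle is the compactness one --- concretely, turning the $\C^0$-estimate (the deep input, already established) together with the boundedness of the moment polytope into uniform-in-$t$ higher-order estimates for the drift-modified flow; the transfer of the energy decay from $(\varphi_t')$ to $(\tilde\varphi_t)$ is then only a matter of checking that the automorphisms in play commute with the flow of $X$ and that the normalisations of $\theta_X$ and of the Ricci potentials are compatible.
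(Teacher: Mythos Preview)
Your proof is correct and follows essentially the same two-step architecture as the paper: uniform higher-order estimates from Yau/Calabi plus parabolic regularity give compactness of $(\tilde\varphi_t)$, and the finiteness of the energy integral~\eqref{Theta<C} forces the limit to satisfy the soliton equation. Your packaging of the identification step is slightly different and arguably cleaner: you work with the intrinsic quantity $h_\omega-\theta_X(\omega)$ and use that the energy integrand is invariant under pull-back by automorphisms in $\operatorname{Aut}_G(M)$ commuting with the flow of $X$, whereas the paper pulls back $\partial\varphi_t'/\partial t$ explicitly via $\rho_t'=\rho_t\circ\sigma_t^{-1}$ and invokes Lemma~\ref{lim0} to pin down the constant. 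Since you only need $h_{\omega_\infty}-\theta_X(\omega_\infty)$ to be constant (not zero), your route bypasses Lemma~\ref{lim0} entirely.

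One genuine presentational slip: you first extract $(t_i)$ by Arzel\`a--Ascoli and then claim that ``after passing to a further subsequence'' the inner energy integral at $t_i$ tends to $0$. This does not follow from $\int_0^\infty f(t)\,dt<\infty$ alone: a nonnegative integrable $f$ can still satisfy $f(t_i)\not\to 0$ along a prescribed sequence (think of narrow bumps of fixed height centred at integers). The fix is to reverse the order: first choose $(t_i)$ with $f(t_i)\to 0$ (possible since $\liminf_{t\to\infty}f(t)=0$; the paper does this concretely by taking $t_i\in[i,i+1]$ minimising $-d\tilde\mu/dt$), and only then extract a convergent subsequence of $(\tilde\varphi_{t_i})$ by compactness. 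With that reordering your argument goes through.
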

\begin{proof}
The proof is divided into several points:
\begin{itemize}
\item[$\bullet$] Using the $\mathcal{C}^2$ estimates of Yau found in \cite{Y} for some Monge-Amp\`{e}re equations, we show that there is a constant $C_2>0$ independent of time $t$
$$
\Vert \tilde{\varphi}_t \Vert_{\mathcal{C}^2}  \leq C_2 \text{  et  } \left( (g_0)_{i \overline{j}} + (\tilde{\varphi}_t)_{i \overline{j}} \right)>0.
$$
Moreover, using Calabi's computations in \cite{Y}, we finally obtain that there is a time-independent constant $C_3>0$ such that:
$$
\Vert \tilde{\varphi}_t \Vert_{\C^3} \leq C_3.
$$
Finally, using the regularization theorem of parabolic equations, we obtain:
\begin{equation}\label{estimationtildevarphi}
\forall k \geq 0, ~~ \exists C_k \geq 0 ,~~ \Vert \tilde{\varphi}_t \Vert_{\C^k} \leq C_k.
\end{equation}
So for any sequence $t_n$ of real such that $t_i \rightarrow_{i \rightarrow + \infty} + \infty$, we can find an extracter $\gamma$ such as the subsequence $\tilde{\varphi}_{t_{\gamma(i)}}$ converges to a smooth function $\tilde{\varphi}_{\infty}$.
\item[$\bullet$] Recall that $X$ is the solitonic vector fiel defined in Proposition \ref{formX}, we denote $\sigma_t:= \exp(tX)$ is one parameter subgroup of automorphisms induced by $X$. We consider multiplying by $\exp(x_t)$ on $G/H$, because we have a action of $G$ on $M$, this application extends into an automorphism $\rho_t$ on $M$.
In addition, we can see directly that $\rho_t$ checks
$
\rho_t^* \tilde{\varphi}_t  = \varphi_t.
$
We set $\rho'_t = \rho_t \circ \sigma_t^{-1}$. We get
\begin{align*}
\int_M \Vert \overline{\partial} ( (\sigma_t')^* \cfrac{\partial \varphi_t'}{ \partial t} ) \Vert^2_{\omega_{\tilde{\varphi}_t}} e^{\theta_X + X(\overline{\varphi}_t)} \, \omega^n_{\overline{\varphi}_t} &= \int_M \Vert \overline{\partial} ( \cfrac{\partial \varphi_t'}{ \partial t} ) \Vert^2_{\omega_{\varphi_t'}} e^{\theta_X + X(\varphi_t')} \, \omega^n_{\varphi_t'}  \\
&= -\dfrac{d \tilde{\mu}_{\omega_0}(\varphi_t')}{dt}.
\end{align*}
In addition, we know that there is a uniform bound for $ \vert X(\psi) \vert$ for all $ \psi \in \mathcal{M}_X(\omega_0)$ (see Corollary 5.3 of \cite{Zh}) and that the function $\theta_X$ is bounded since continuous on a compact variety so we obtain that there exists a constant $C_1>0$ independent of $t$ such that
\begin{equation}\label{ineqfin}
0 \leq C_1 \, \int_M \Vert \overline{\partial} ( (\sigma_t')^* \cfrac{\partial \varphi_t'}{ \partial t} ) \Vert^2_{\omega_{\tilde{\varphi}_t}}  \, \omega^n_{\overline{\varphi}_t} \leq \int_M \Vert \overline{\partial} ( (\sigma_t')^* \cfrac{\partial \varphi_t'}{ \partial t} ) \Vert^2_{\omega_{\tilde{\varphi}_t}} e^{\theta_X + X(\overline{\varphi}_t)} \, \omega^n_{\overline{\varphi}_t} =  -\dfrac{d \tilde{\mu}_{\omega_0}(\varphi_t')}{dt}.
\end{equation}
In particular, since we have
$$
\tilde{C} \leq \tilde{\mu}_{\omega_0}(\varphi_t') \leq 0,
$$
we get that the function $ t \mapsto \tilde{\mu}_{\omega_0}(\varphi_t') $ is decreasing and so admits a finite limit at $+ \infty$. Moreover, there exits a sequence of reals $(t_i)_{i \in \NN}$ such that
\begin{equation}\label{lim=0}
\lim_{i \rightarrow + \infty} \, \int_M \Vert \overline{\partial} ( (\sigma_t')^* \cfrac{\partial \varphi_t'}{ \partial t}  \vert_{t=t_i}) \Vert^2_{\omega_{\tilde{\varphi}_{t_i}}}  \, \omega^n_{\overline{\varphi}_{t_i}}=0.
\end{equation}
Indeed, we define the sequence $(t_i)_{i \in \NN}$ such that
$$
\cfrac{d \, \tilde{\mu}_{\omega_0}(\varphi_t')}{dt} \vert_{t=t_i} = \sup_{s \in [i,i+1]} \left(  \cfrac{d \, \tilde{\mu}_{\omega_0}(\varphi_t')}{dt} \vert_{t=s} \right),
$$
and so, thanks to Equations \eqref{ineqfin}, its satsifies
$$
0 \leq C_1 \, \int_M \Vert \overline{\partial} ( (\sigma_t')^* \cfrac{\partial \varphi_t'}{ \partial t} \vert_{t=t_i}) \Vert^2_{\omega_{\tilde{\varphi}_{t_i}}}  \, \omega^n_{\overline{\varphi}_{t_i}} \leq  -\dfrac{d \tilde{\mu}_{\omega_0} (\varphi_t')}{dt} \vert_{t=t_i} \leq \tilde{\mu}_{\omega_0} (\varphi'_i)  - \tilde{\mu}_{\omega_0} (\varphi'_{i+1}) .
$$
We conlude by remarking that the right terme converge to $0$ when $i$ tends to $+\infty$ since $\tilde{\mu}_{\omega_0}(\varphi'_t)$ admits a finite limit when $t$ tends to $+ \infty$.
\item[$\bullet$] By using Equation \eqref{equationvarphitilde}, we get that
\begin{equation}\label{aaaa}
\cfrac{\sqrt{-1}}{2 \pi} \partial \overline{\partial} [ (\sigma_t')^* \cfrac{\partial \varphi_t'}{\partial t}] = - Ric(\omega_{\tilde{\varphi}_t}) + \omega_{\tilde{\varphi}_t} + \LLL_X \omega_{\tilde{\varphi}_t},
\end{equation}
it implies, thanks to the equation \ref{estimationtildevarphi}, that $(\sigma_t')^* \cfrac{\partial \varphi_t'}{\partial t} $ is uniformly bounded in a $C^k$ way for all $k \in \NN$. So according to Arzela-Ascoli's theorem, we can extract a convergent subsequence $(\sigma_t')^* \cfrac{\partial \varphi' }{\partial t}(t_i, \cdot) $. Moreover, by using Equation \eqref{lim=0} and Lemma \ref{lim0}, we obtain, even if it means extracting a subsequence again, that $(\sigma_t')^* \cfrac{\partial \varphi_t'}{\partial t}(t_i, \cdot) $ converges to $0$ in a $\C^k$ way for all $k \in \NN$. Using the equation \eqref{aaaa} and using the first point of the proof to extract a convergent subsequence, we obtain that $\omega_{\tilde{\varphi}_{t_i}}$ converges to the Kähler-Ricci soliton defined by
$$
(\omega_0 + \dfrac{\sqrt{-1}}{2 \pi} \partial \overline{\partial} \tilde{\varphi}_{\infty},X).
$$
\end{itemize}
\end{proof}
\begin{corollary}
There exists a Kähler-Ricci soliton $(g_{KRS},X_{KRS})$ on every horospherical manifold $M$.
\end{corollary}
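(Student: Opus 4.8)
The plan is to deduce the corollary directly from Theorem~\ref{cvss}, after observing that its standing hypotheses hold automatically for every horospherical manifold. First I would recall that, in the terminology of Section~\ref{blablabla}, a horospherical manifold $M$ is a \emph{smooth} horospherical variety, so by the GAGA reduction recalled there it is a Fano projective complex manifold; in particular $c_1(M)>0$, and the class $2\pi\,c_1(M)$ contains a K\"ahler form (for instance the Fubini-Study form of a projective embedding given by a suitable multiple of $-K_M$, rescaled). I would then average such a form over a maximal compact subgroup $K$ of $G=\operatorname{Aut}_r(M)$ (which is connected, hence acts trivially on $H^2(M;\RR)$) to obtain a $K$-invariant K\"ahler form $\omega_0\in 2\pi\,c_1(M)$, which is precisely the datum required to set up the flow of Section~\ref{setuphoro}.

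Next I would run the normalized K\"ahler-Ricci flow \eqref{NKRF} with initial value $\omega_0$: by Proposition~\ref{exittinfity} it produces a family $(\varphi_t)_{t\in[0,+\infty[}$ solving \eqref{NKRFpop} and defined for all $t\ge 0$, and since the flow preserves $K$-invariance it reduces to the real Monge-Amp\`ere flow \eqref{KRF-MAER} for the convex potentials $(u_t)$. Applying the translations $x\mapsto x+x_t'$ and the renormalization of $h$ constructed in the previous subsections would then bring the flow into the form \eqref{KRFM} of the modified K\"ahler-Ricci flow for the potentials $(\tilde\varphi_t)$, so that all the hypotheses of Theorem~\ref{cvss} are in force.

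Finally I would invoke Theorem~\ref{cvss}: along a suitable sequence $t_i\to+\infty$, $\tilde\varphi_{t_i}$ converges in $\C^\infty(M)$ to a potential $\overline{\varphi}_\infty$, and the uniform bounds \eqref{estimationtildevarphi}, in particular the uniform positivity of $(g_0)_{i\overline{j}}+(\tilde\varphi_t)_{i\overline{j}}$, guarantee that $\omega_\infty:=\omega_0+\tfrac{\sqrt{-1}}{2\pi}\,\partial\overline{\partial}\,\overline{\varphi}_\infty$ is again a K\"ahler form and that $(\omega_\infty,X)$ is a K\"ahler-Ricci soliton, $X$ being the solitonic vector field of Proposition~\ref{formX}. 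It then suffices to take for $g_{KRS}$ the K\"ahler metric with K\"ahler form $\omega_\infty$ and to set $X_{KRS}:=X$. I do not expect any genuine obstacle at this stage: the whole analytic content is already packaged into Theorem~\ref{cvss}, and the only point specific to the horospherical setting that must be checked here is that $M$ is Fano and carries a $K$-invariant metric in $2\pi\,c_1(M)$, both of which are immediate from the GAGA reduction and from the compactness and connectedness of $K$.
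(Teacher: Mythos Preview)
Your proposal is correct and follows exactly the paper's approach: the corollary is stated immediately after Theorem~\ref{cvss} with no separate proof, since the existence of a K\"ahler-Ricci soliton is read off directly from the limit $(\omega_0+\tfrac{\sqrt{-1}}{2\pi}\partial\overline{\partial}\overline{\varphi}_\infty,X)$ produced there. Your additional remarks (that $M$ is Fano by the GAGA reduction and that a $K$-invariant $\omega_0\in 2\pi\,c_1(M)$ exists by averaging) just make explicit the standing hypotheses already set up in Sections~\ref{blablabla} and~\ref{setuphoro}.
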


If we use the result of uniqueness (theorem $3.2.$ of \cite{TZ2}), we obtain that there is an automorphism $\beta \in \operatorname{Aut}_r(M)$ such that
$$
\omega_{KRS}=\beta^* \omega_{\tilde{\varphi}_\infty}, ~~ X_{KRS}= \left(\beta^{-1} \right)_*(X).
$$
Then, for simplicity, we can assume that $\beta = Id$. Now, we can complete Theorem \ref{cvss}. 

\begin{theorem}
With the previous notation, the solution $\omega_{\tilde{\varphi}_t}$ to the Kähler-Ricci modified by $X_t$ converge to $\omega_{\tilde{\varphi}_\infty}$ when $t \rightarrow + \infty$.
\end{theorem}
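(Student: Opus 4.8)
The plan is to upgrade the subsequential convergence furnished by Theorem \ref{cvss} to genuine convergence as $t\to+\infty$, using three ingredients already in place: the a priori higher order estimates \eqref{estimationtildevarphi}, the monotonicity and boundedness of the modified Mabuchi functional $\tilde\mu_{\omega_0}$, and the uniqueness of Kähler–Ricci solitons in $2\pi c_1(M)$.

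First I would prove that the energy $e(t):=-\,d\tilde\mu_{\omega_0}(\varphi_t')/dt\ge 0$ tends to $0$ as $t\to+\infty$, and not merely along the particular sequence built in the proof of Theorem \ref{cvss}. On the one hand $\int_0^{+\infty}e(t)\,dt=-\lim_{t\to+\infty}\tilde\mu_{\omega_0}(\varphi_t')=\Theta<C$ by Equation \eqref{Theta<C}. On the other hand, pulling back by $\sigma_t'$, the quantity $(\sigma_t')^*\,\partial\varphi_t'/\partial t$ is, by Equation \eqref{aaaa}, the $\frac{\sqrt{-1}}{2\pi}\partial\overline{\partial}$--potential of $-\Ric(\omega_{\tilde\varphi_t})+\omega_{\tilde\varphi_t}+\LLL_X\omega_{\tilde\varphi_t}$, hence bounded in every $\C^k$ norm by \eqref{estimationtildevarphi}; differentiating the modified flow \eqref{KRFM} once more in time and using \eqref{estimationtildevarphi} again bounds $e'(t)$ uniformly. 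Since a nonnegative integrable function on $[0,+\infty[$ with bounded derivative tends to $0$, we conclude $e(t)\to 0$.

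Then I would argue as follows. By \eqref{estimationtildevarphi} the family $(\tilde\varphi_t)$ is relatively compact in $\C^\infty(M)$. Given any sequence $t_i\to+\infty$, extract a subsequence with $\tilde\varphi_{t_i}\to\tilde\varphi_\infty'$ in $\C^\infty$. Because $e(t)\to0$, the $\overline{\partial}$ of $(\sigma_t')^*\partial\varphi_t'/\partial t$ tends to $0$ in $L^2$, and its mean tends to $0$ by Lemma \ref{lim0}; letting $t_i\to+\infty$ in \eqref{aaaa} gives $-\Ric(\omega_{\tilde\varphi_\infty'})+\omega_{\tilde\varphi_\infty'}+\LLL_X\omega_{\tilde\varphi_\infty'}=0$, so $(\omega_0+\frac{\sqrt{-1}}{2\pi}\partial\overline{\partial}\tilde\varphi_\infty',X)$ is a Kähler–Ricci soliton. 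It remains to see that every such cluster point equals $\tilde\varphi_\infty$: the limiting convex potential $\tilde u_\infty$ on $\aaa_1$ solves the real Monge–Ampère soliton equation with fixed field $\xi$ (Proposition \ref{formX}), whose solution is unique up to an additive constant and a translation of $\aaa_1$; the constant is fixed by the renormalisation of $h$ in Lemma \ref{lim0}, and the translation is fixed because the $x_t'$ remain at bounded distance from the minimum points $x_t$. Equivalently, the uniqueness theorem of Tian–Zhu (Theorem $3.2$ of \cite{TZ2}) says two such solitons differ by an element of $\operatorname{Aut}_r(M)$ commuting with $K$, hence by an element of $P/H$, which the centering forces to be the identity. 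A relatively compact sequence in $\C^\infty(M)$ with a unique cluster point converges to it, so $\tilde\varphi_t\to\tilde\varphi_\infty$ in $\C^\infty$ and $\omega_{\tilde\varphi_t}=\omega_0+\frac{\sqrt{-1}}{2\pi}\partial\overline{\partial}\tilde\varphi_t\to\omega_{\tilde\varphi_\infty}$.

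The main obstacle is this last identification of the limit: one has to check that the modification data --- the curve $t\mapsto x_t'$, the modifying field $\tilde X_t=dx_t'/dt$, and the renormalised $h$ --- really eliminates the automorphism ambiguity in the Tian–Zhu uniqueness statement, i.e.\ that $\omega_{\tilde\varphi_\infty}$ is the only soliton reachable as a $\C^\infty$-cluster point of the centered modified flow. This rests on the explicit description of $K$-invariant metrics on the horospherical manifold (Theorem \ref{delomega}, Proposition \ref{potconv}) together with \cite{TZ2}, and on the Lipschitz control $|x_t-x_{t'}|\le C$ for $|t-t'|\le 1$ established in Lemma \ref{xtdiff}.
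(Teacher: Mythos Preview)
Your argument is essentially correct, but it differs from the paper's in one substantive way. The paper never proves that $e(t):=-\,d\tilde\mu_{\omega_0}(\varphi_t')/dt\to 0$ along the full flow. Instead, given an arbitrary sequence $t_i\to+\infty$ and $\epsilon>0$, it invokes the mean-value inequality on each window $[t_i-\epsilon,t_i+\epsilon]$ to locate times $s_i$ with $e(s_i)\le\frac{1}{2\epsilon}\bigl(\tilde\mu(\varphi'_{t_i-\epsilon})-\tilde\mu(\varphi'_{t_i+\epsilon})\bigr)\to 0$; after extraction $\tilde\varphi_{s_i}$ converges to a soliton, and a parabolic stability estimate for the modified flow on $[t_i-\epsilon,t_i+\epsilon]$ (implicit function theorem, using $|t_i-s_i|\le\epsilon$ and $\|G\|_{\C^k}\le C\epsilon$) gives $\|\tilde\varphi_{t_i}-\tilde\varphi_{s_i}\|_{\C^k}\le\delta(\epsilon)$. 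Your Barbalat-type step (integrable nonnegative $e$ with bounded derivative) replaces this two-stage mechanism and is cleaner; just note that ``differentiating \eqref{KRFM} once more in time'' produces $\LLL_{\partial_t\tilde X_t}\omega_{\tilde\varphi_t}$, and $\partial_t\tilde X_t=d^2x_t'/dt^2$ is only piecewise defined by the construction preceding \eqref{condxt'}, so you should phrase the conclusion as uniform Lipschitz continuity of $e$ rather than a pointwise bound on $e'$.

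On the identification of the limit you correctly isolate the real difficulty. The paper handles it by writing ``for simplicity, even if it means changing $\omega_{KRS}$, we can assume $\beta=\mathrm{Id}$'', which is exactly the same loose end you flag: Tian--Zhu uniqueness only gives the soliton up to $\operatorname{Aut}_r(M)$, and one must use the centering of $\tilde u_t$ (minimum at bounded distance from the origin, by $|x_t-x_t'|\le D$) together with the explicit torus $P/H\simeq\aaa_1\oplus J\aaa_1$ to see that two $K$-invariant cluster points cannot differ by a nontrivial element of $P/H$. Your sketch of this (unique real Monge--Amp\`ere soliton up to translation, translation pinned by the centering) is the right mechanism; it is not spelled out in the paper either.
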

\begin{proof}
To prove this theorem, thanks to Arzela-Ascoli theorem, it is sufficient to show that for any sequence of real positives $(t_i)_{n \in \NN}$ such that $t_i \rightarrow + \infty$, there is a subsequence of extractors $\gamma$ such as $\omega_{\tilde{\varphi}_{t_{\gamma(i) }}}$ converge to $\omega_{\tilde{\varphi}{\infty}}$. We consider a sequence of real $(t_i)_{i \in \NN}$ such that $t_i \rightarrow + \infty$ and set $\epsilon>0$. We are going to divide the proof into several points:
\begin{itemize}
\item[$\bullet$]  The first step is to show that there is a sequence of real $(s_i)_{i \in \NN}$ such that
$$
s_i \in[t_i - \epsilon, t_i + \epsilon],
$$
and such that
\begin{equation}\label{lim=0s_i}
\lim_{i \rightarrow + \infty} \, \int_M \Vert \overline{\partial} ( (\sigma_t')^* \cfrac{\partial \varphi_t'}{ \partial t}  \vert_{t=t_i}) \Vert^2_{\omega_{\tilde{\varphi}_{t_i}}}  \, \omega^n_{\overline{\varphi}_{t_i}}=0.
\end{equation}
In particular, by taking up the approach of the proof of Theorem \ref{cvss}, we also have, even if it means extracting, that
$
\omega_{\varphi_{s_i}} 
$
converges to $\omega_{\varphi_{s_\infty}}$ which is a Kähler-Ricci soliton and in particular
\begin{equation}\label{Juju}
\Vert \cfrac{\partial \varphi_t}{\partial t} \vert_{t=s_i} \Vert_{\C^k} \longrightarrow_{i \rightarrow + \infty} 0.
\end{equation}

Moreover, thanks to the theorem of uniqueness (Theorem $3.2.$ of \cite{TZ2}), there exists $\beta \in Aut_r(M)$ such that
$
\beta^*\omega_{\varphi_{s_{\infty}}} = \omega_{KRS}.
$
Moreover, thanks to uniqueness of Kähler-Ricci solitons ( Theorem $3.2.$ of \cite{TZ2}), we know that there exists $\beta \in Aut_r(M)$ such that
$
\beta^*\omega_{\varphi_{s_{\infty}}} = \omega_{KRS}.
$
For simplicity, even if it means changing $\omega_{KRS}$, we can assume that $\beta=Id$.
$$
\cfrac{d \, \tilde{\mu}_{\omega_0}(\varphi_t')}{dt} \vert_{t=s_i} = \sup_{s \in [t_i- \epsilon,t_i+ \epsilon]} \left(  \cfrac{d \, \tilde{\mu}_{\omega_0}(\varphi_t')}{dt} \vert_{t=s} \right),
$$
and, thanks to Equation \eqref{ineqfin},
$$
0 \leq C_1 \, \int_M \Vert \overline{\partial} ( (\sigma_t')^* \cfrac{\partial \varphi_t'}{ \partial t} \vert_{t=s_i}) \Vert^2_{\omega_{\tilde{\varphi}_{s_i}}}  \, \omega^n_{\overline{\varphi}_{s_i}} \leq  -\dfrac{d \tilde{\mu}_{\omega_0} (\varphi_t')}{dt} \vert_{t=s_i} \leq \cfrac{1}{\epsilon} \left( \tilde{\mu}_{\omega_0} (\varphi'_{t_i - \epsilon })  - \tilde{\mu}_{\omega_0} (\varphi'_{t_i + \epsilon}) \right).
$$
We then conclude by noting that the right term converges to $0$ when $i$ tends to
$+\infty$ since $\tilde{\mu}_{\omega_0}(\varphi'_t)$ admits a finite limit when $t$ tends to $+ \infty$.

\item[$\bullet$] The second step is to show that the sequence
$(\tilde{\varphi}_{t_i} - \tilde{\varphi}_{s_i} )_{i \in \NN}$ checks from a certain rank that
$$\Vert\tilde{\varphi}_{t_i} - \tilde{\varphi}_{s_i} \Vert_{\C^k} \leq \delta( \epsilon),
$$
where $\delta(\epsilon)$ satisfies $\delta(\epsilon)\longrightarrow_{\epsilon \rightarrow 0} 0.
$
To do this, we will use the theory of parabolic equations. Indeed, by using Equation \eqref{equationvarphitilde}, we prove that  $\left(  \tilde{\varphi}_t - \tilde{\varphi}_{s_i}  \right)$ is solution for $t \in [t_i - \epsilon , t_i + \epsilon]$ of Monge-Ampère flow : 
$$
\dfrac{\partial \left(  \tilde{\varphi}_t - \tilde{\varphi}_{s_i}  \right)}{\partial t} = L(t,x) \left ( \tilde{\varphi}_t - \tilde{\varphi}_{s_i}  \right)  - G(t,x),
$$
où
$$
L(t,x) \varphi := \log \left( \cfrac{ \left( \omega_{\tilde{\varphi}_{s_i}} + \sqrt{-1} \partial \overline{\partial} \, \varphi \right)^n} { \left( \omega_{\tilde{\varphi}_{s_i}} \right)^n} \right) + \tilde{X}_t \left( \varphi \right) + \varphi
$$
and
\begin{align*}
G(t,x) :&= \left( \tilde{X}_t - \tilde{X}_{s_i} \right) \left( \tilde{\varphi}_{s_i} \right) - \left(  \theta_{\tilde{X}_t} - \theta_{\tilde{X}_{s_i}} \right) - \cfrac{\partial \varphi_t}{\partial t} \vert_{t=s_i}.
\end{align*}
We notice that from a certain rank, thanks to Equation \eqref{condxt'} and \eqref{Juju}, we have that there exists a constant $C>0$ independent of $t$ such as $\Vert G \Vert_{\C^k} \leq C \epsilon$. To conclude, it is then sufficient to use the theorem of implicit functions.
\item[$\bullet$] We then conclude by showing that $\tilde{\varphi}_{t_i}$ converges to $\varphi_{KRS}$ in a $\C^k$ way for all $k>0$. Indeed, we have
$$
\Vert \tilde{\varphi}_{t_i} - \varphi_{KRS} \Vert_{\C^k} \leq \Vert \tilde{\varphi}_{t_i} - \tilde{\varphi}_{s_i}\Vert_{\C^k} + \Vert  \tilde{\varphi}_{s_i} -  \varphi_{KRS} \Vert_{\C^k}.
$$
Using the two previous points, we obtain that from a certain rank, we have
$$
\Vert \tilde{\varphi}_{t_i} - \varphi_{KRS} \Vert_{\C^k} \leq \delta(\epsilon) + \epsilon.
$$
Since the result is true for any $\epsilon>$0, we get the desired result.
\end{itemize}
\end{proof}

By definition of convergence in the Cheeger-Gromov sense, we directly obtain the main theorem that we recall here

\begin{theorem}\label{cvflotHS}
Let $M$ a Fano horosphercal manifold of horospherical homegeneous space $G/H$ where $G$ is the complexification of a maximal compact subgroup $K$. The solution $\omega_{\varphi_t}$ defined for $t \in [0, + \infty[$ of the Kähler-Ricci flow with inital value a $K$-invariant metric $\omega_0$ converge in the Cheeger-Gromov sense to a Kähler form $\omega_{\infty}$ when $t$ tends to $+ \infty$ where $\omega_{\infty}$ is a Kähler-Ricci soliton.
\end{theorem}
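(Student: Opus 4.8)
The plan is to deduce this statement directly from the convergence of the modified Kähler-Ricci flow obtained just above, simply by unwinding the definition of convergence in the Cheeger-Gromov sense. First I would record how the flows are related: the renormalized flow $\omega_{\varphi_t}$ (solution of \eqref{NKRF}) corresponds, via its $K$-invariance, to the convex potentials $u_t$ solving \eqref{KRF-MAER}, while the modified flow $\omega_{\tilde\varphi_t}$ (solution of \eqref{KRFM}) has convex potential $\tilde u_t = u_t(x_t' + \cdot)$ on $\aaa_1$. For each $t$, the translation $x \mapsto x + x_t'$ on $\aaa_1 \subset \ttt$ is realized by an automorphism $\rho_t \in \operatorname{Aut}_G(M) \simeq P/H$, namely the one induced by $\exp(x_t') \in T \subset P$ acting through $(e,\exp(x_t')H)$; being a right multiplication it commutes with the left $K$-action, so $\rho_t^*\omega_{\varphi_t}$ is again a $K$-invariant form in $2\pi c_1(M)$, and by Theorem \ref{delomega} its convex potential on $\aaa_1$ is $u_t(x_t'+\cdot) = \tilde u_t$. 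Since a $K$-invariant form in $2\pi c_1(M)$ is determined on the dense orbit $G/H$, hence on all of $M$, by its convex potential, this yields $\rho_t^*\omega_{\varphi_t} = \omega_{\tilde\varphi_t}$.

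Granting this identification, the conclusion is immediate. By the convergence of the modified flow established above, $\omega_{\tilde\varphi_t}$ converges in $\C^\infty(M)$ to $\omega_{\tilde\varphi_\infty} =: \omega_\infty$, and by Theorem \ref{cvss} together with the uniqueness theorem for Kähler-Ricci solitons, $\omega_\infty$ is a Kähler-Ricci soliton whose solitonic vector field is the $X$ of Proposition \ref{formX}. As the $\rho_t$ are diffeomorphisms of $M$, the identity $\rho_t^*\omega_{\varphi_t} = \omega_{\tilde\varphi_t}$ says precisely that a family of diffeomorphisms pulls the flow $\omega_{\varphi_t}$ back to a family converging smoothly to the soliton $\omega_\infty$, which is the definition of convergence in the Cheeger-Gromov sense. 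It then only remains to pass from the renormalized flow \eqref{NKRF} back to the Kähler-Ricci flow \eqref{KRF} through the change of variables $\tilde\omega_t = e^t\,\omega_{1-e^{-t}}$; this merely rescales the metrics and reparametrizes time, hence preserves Cheeger-Gromov convergence, and gives the stated theorem.

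I do not expect a genuine obstacle at this stage, since all the analytic work has already been carried out: the $\C^0$-estimate of Proposition \ref{estimeC^0} coming from the generalized $\mathcal{K}$-energy functional $\tilde\mu$, the uniform control of $c_t$ and $\partial\varphi_t/\partial t$ in Proposition \ref{c_t<C}, Perelman's estimate, the higher-order parabolic estimates \eqref{estimationtildevarphi}, and the determination of the solitonic vector field in Proposition \ref{formX}. The one point requiring care is the bookkeeping of the first paragraph: one must check that the $\C^1$-interpolated family $x_t'$ is indeed realized by automorphisms of $M$ commuting with the $K$-action, and that the curvature formula of Theorem \ref{delomega} transforms under these automorphisms exactly by the translation $u_t \mapsto u_t(x_t'+\cdot)$ on $\aaa_1$, so that $\rho_t^*\omega_{\varphi_t}$ and $\omega_{\tilde\varphi_t}$ are genuinely two $K$-invariant forms with the same convex potential. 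Once this is in place, the theorem follows formally.
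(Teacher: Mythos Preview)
Your proposal is correct and follows essentially the same approach as the paper, which dispatches this theorem in a single sentence: ``By definition of convergence in the Cheeger-Gromov sense, we directly obtain the main theorem.'' You supply considerably more detail than the paper does in verifying the identification $\rho_t^*\omega_{\varphi_t}=\omega_{\tilde\varphi_t}$, but the paper has already used this fact implicitly (see the remark after \eqref{phibar} and the line $\rho_t^*\tilde\varphi_t=\varphi_t$ in the proof of Theorem \ref{cvss}), so the substance is the same. One minor remark: your final step, passing back from the renormalized flow \eqref{NKRF} to the flow \eqref{KRF}, is unnecessary here, since the theorem concerns $\omega_{\varphi_t}$ defined for $t\in[0,+\infty[$, which is already the renormalized flow.
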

\bibliographystyle{alpha}
\bibliography{biblio}
\end{document}